\renewcommand\subsubsection{\@startsection{subsubsection}{3}%
	\z@{.5\linespacing\@plus.7\linespacing}{-.5em}%
	{\normalfont\bfseries}}
\newcommand{\cc}{\ensuremath{\mathbb{C}}\xspace}
\newcommand{\rr}{\ensuremath{\mathbb{R}}\xspace}
\newcommand{\zz}{\ensuremath{\mathbb{Z}}\xspace}
\newcommand{\D}{\ensuremath{\mathbb{D}}\xspace}
\newcommand{\abs}[1]{\left| #1 \right|}
\newcommand{\set}[1]{\lbrace #1 \rbrace}
\newcommand{\rest}[2]{\left.{#1}\right|_{#2}}
\renewcommand{\bar}{\overline}
\renewcommand{\tilde}{\widetilde}
\newcommand{\inp}[1]{\left\langle #1 \right\rangle}
\DeclareMathOperator{\Res}{Res}
\newcommand{\oo}{\mathcal{O}}
\newcommand{\elli}{\mathcal{A}_{\abs{D}}}
\DeclarePairedDelimiter\floor{\lfloor}{\rfloor}
\renewcommand{\phi}{\varphi}
\declaretheorem[style=definition,qed=$\diamondsuit$]{definition}
\declaretheorem[style=definition,qed=$\triangle$,sibling=definition]{example}
\declaretheorem[style=plain,sibling=definition]{theorem}
\declaretheorem[style=plain,sibling=definition]{lemma}
\declaretheorem[style=plain,sibling=definition]{proposition}
\declaretheorem[style=plain,sibling=definition]{corollary}
\declaretheorem[style=definition,qed=$\diamondsuit$,sibling=example]{claim}
\declaretheorem[style=definition,qed=$\diamondsuit$,sibling=claim]{remark}
\newtheorem*{claim*}{Claim}
\numberwithin{theoremalpha}{section}
\numberwithin{equation}{section}
\numberwithin{definition}{section}
\numberwithin{theorem}{section}
\numberwithin{proposition}{section}
\numberwithin{lemma}{section}
\numberwithin{example}{section}
\numberwithin{remark}{section}
\numberwithin{corollary}{section}
\definecolor{ao(english)}{rgb}{0.0, 0.5, 0.0}
\newtheoremstyle{named}{}{}{\itshape}{}{\bfseries}{.}{.5em}{\thmnote{#3}#1}
\theoremstyle{named}
\newtheorem*{namedtheorem}{}
\numberwithin{equation}{section}
\begin{document}
\title{Fibrations in semi-toric and generalized complex geometry}
\author{Gil R. Cavalcanti}
\address{Department of Mathematics, Utrecht University, 3508 TA Utrecht, The Netherlands}
\email{g.r.cavalcanti@uu.nl}
\author{Ralph L.\ Klaasse}
\address{Department of Mathematics, KU Leuven, Celestijnenlaan 200B, BE-3001 Leuven, Belgium}
\email{r.l.klaasse@gmail.com}
\author{Aldo Witte}
\address{Department of Mathematics, Utrecht University, 3508 TA Utrecht, The Netherlands}
\email{g.a.witte@uu.nl}
\begin{abstract} 
This paper studies the interplay between self-crossing boundary Lefschetz fibrations and generalized complex structures. We show that these fibrations arise from the moment maps in semi-toric geometry and use them to construct self-crossing stable generalized complex four-manifolds using Gompf--Thurston methods for Lie algebroids. These results bring forth further structure on  several previously known examples of generalized complex manifolds. We moreover show that these fibrations are compatible with taking connected sums, and use this to prove a singularity trade result between two types of singularities occurring in these fibrations.
\end{abstract}
\maketitle
\tableofcontents
%
\section{Introduction}
\subsection{Fibrations in geometry}
A general theme in the world of geometric structures is the interplay between specific geometric structures and particular types of maps. The simplest example arises in the context of fibrations, where one studies whether the presence of a geometric structure on the base and the fibre implies its existence on the total space. However, to obtain interesting spaces and geometric structures, one should allow maps to have more singularities.

A concrete case that illustrates this comes from symplectic geometry: symplectic fibrations play an important role, but they are quite rare on general symplectic manifolds. Instead, if one allows the fibration to have Lefschetz singularities one obtains enough flexibility to establish broad existence results \cite{MR1802722,MR2108373,GS99}.

In this paper we use the point of view of decoupling the differential properties of maps from the desired underlying geometric structure. This decoupling has proven useful and allowed for several extensions of the results mentioned above, including those in \cite{AkbulutKarakurt08,MR2140998,MR2253445,Baykur08,Baykur09,MR3653062,CK18,CavalcantiKlaasse19,EtnyreFuller06,GayKirby07,Lekili09}. Here, allowing for maps to have Lefschetz and similar other singularities lead to existence results for several different types of geometric structures.

Another way in which singular fibrations arise is from proper group actions. In this setting the ``fibration'' condition translates to the group action being free, which is restrictive. The quotient map of a generic action will have singularities at points with non-trivial isotropy. Such singular fibrations are particularly well studied for torus actions, $T^n\times M^{2n} \to M^{2n}$, where, even if the actions considered are not free, they are well-behaved enough to ensure that the quotient space admits the structure of a manifold with corners. The coupling of torus actions with geometric structures leads to many fruitful concepts, one of the highlights being toric geometry.

The Lefschetz and the toric pictures come together in semi-toric geometry \cite{VP08}, where maps are allowed to have both types of singularities. In this setting the maps can have three types of singularities: elliptic, elliptic--elliptic and focus--focus, with the latter being equivalent to Lefschetz singularities. However, in semi-toric geometry the decoupling of maps and geometric structures has happened only partially, since local singular behaviour (Lefschetz and toric) and more global properties (integral affine structures) are mixed, leading to topological results \cite{MR2670163}.

In this paper we introduce the differential objects hinted at by semi-toric geometry: these are called \emph{self-crossing boundary (Lefschetz) fibrations} (c.f.\ Definitions~\ref{def:bf} and \ref{def:blf}). These types of maps incorporate local phenomena from both symplectic fibrations and quotient maps of semi-toric manifolds, but do not require global structures to be present, such as group actions or integral affine structures. In this paper we use these boundary fibrations to construct an a priori seemingly unrelated geometric structure, namely a generalized complex structure. 
\subsection{Generalized complex structures}
Generalized complex structures \cite{H03,Gua07} are a simultaneous generalisation of symplectic and complex structures. Infinitesimally these structures induce the product of a symplectic and complex vector space on each tangent space. However, the number of complex and symplectic directions, called the \emph{type}, can vary from point to point, leading to the notion of type change. These type-changing generalized complex structures are among the most interesting to study. Within the type-changing generalized complex structures, one class was put forward in \cite{CG17,CKW20} for being geometrically very rich and having well-controlled singular behaviour: \emph{self-crossing stable generalized complex structures}.

It is shown in \cite{CKW20} that self-crossing stable generalized complex structures on a manifold $M$ are in one-to-one correspondence with certain Lie algebroid symplectic structures, so that this paper makes extensive use of Lie algebroids. The singularities at the type-change locus $D$ induces a Lie algebroid $\mathcal{A}_{|D|} \to M$ called the \emph{self-crossing elliptic tangent bundle}, and the generalized complex structure makes it into a symplectic Lie algebroid, carrying an \emph{elliptic symplectic structure}. An elliptic symplectic structure corresponds to a self-crossing stable generalized complex structure if the locus $D$ is co-orientable and its so-called \emph{index} is $1$.

Given a self-crossing boundary Lefschetz fibration $f\colon (M,D) \to (N,Z)$ where $Z$ is a hypersurface of $N$, there is another relevant Lie algebroid, namely the \emph{self-crossing log-tangent bundle} $\mathcal{A}_Z \to N$. The map $f$ has singularities precisely such that it induces a Lie algebroid morphism
\[(\varphi,f)\colon (\mathcal{A}_{|D|},M) \to (\mathcal{A}_Z,N),\]
where $\varphi$ is now a Lie algebroid version of a Lefschetz fibration. The relevant geometric structure on the base of this fibration is a symplectic structure on $\mathcal{A}_Z$, also known as a \emph{self-crossing log-symplectic structure}. These have also appeared in \cite{GLPR17,MS18}. The Lie algebroid fibration is said to be \emph{compatible} with the elliptic symplectic structure on its total space if $\ker \varphi \subseteq \mathcal{A}_{|D|}$ consists of symplectic subpsaces. In turn, a stable generalized complex structure is \emph{compatible} with a boundary Lefschetz fibration if its induced elliptic symplectic structure is.
\subsection{Results}
In this section we describe the main results obtained in this paper. In the interest of brevity more precise versions of the below results can be found in the main body of the text.
\subsubsection*{Existence}
Following the strategy of constructing symplectic structures out of fibrations, we prove a Gompf--Thurston theorem for self-crossing stable generalized complex structures. This result is the generalisation of a similar result for stable generalized complex structures with embedded type-change locus appearing in \cite{CK18}, but requires several adaptations of the argument. We say that a map $f$ is \emph{homologically essential} if its generic fibre is non-trivial in homology.
\begin{namedtheorem}[Theorem~\ref{th:main}]
Let $f\colon (M^4,D^2) \rightarrow (N^2,\partial N)$ be a homologically essential self-crossing boundary Lefschetz fibration. Then $M^4$ admits an elliptic symplectic structure compatible with $f$, which induces a self-crossing stable generalized complex structure compatible with $f$ if the locus $D$ is co-orientable and its index is equal to $1$.
\end{namedtheorem}
\subsubsection*{Construction}
Having established that boundary Lefschetz fibrations supply self-crossing stable generalized complex structures, we decouple the map from the geometric structure and study them separately. These types of maps are flexible enough to admit connected sums:
\begin{namedtheorem}[Theorem~\ref{thm:glue}]
Let $f_i\colon (M_i^4,D_i^2) \rightarrow (N_i^2,\partial N_i)$ for $i = 1,2$ be boundary Lefschetz fibrations and let $p_i \in M_i$ be such that $q_i = f_i(p_i)$ are corners of the manifolds $N_i$. Then there exists a boundary Lefschetz fibration on their connected sum,
\begin{align*}
f_1 \# f_2\colon (M_1 \#_{p_1,p_2}M_2,D_1 \# D_2) \rightarrow (N_1\#_{q_1,q_2}N_2,\partial (N_1\# N_2)),
\end{align*}
which is compatible with the inclusion $M_i \backslash \set{p_i} \hookrightarrow M_1 \# M_2$. Moreover, the map $f_1 \# f_2$ is homologically essential if and only if $f_1$ and $f_2$ are.
\end{namedtheorem}
This result is in sheer contrast with the situation in toric geometry. There is no symplectic connected sum procedure, and most of the manifolds obtained using the above proposition will have no toric structure. This difference in rigidity between the generalized complex and toric worlds is already apparent on the base of these fibrations. Namely, for generalized complex structures the base carries a self-crossing log-symplectic structure, which is quite flexible. On the other hand, in toric geometry the base carries an integral affine structure, which is very rigid. In other words, although toric manifolds do not behave well with respect to connected sums, the underlying torus actions and abstract quotient maps do.
\subsubsection*{Singularity trades}
The nodal trade procedure in semi-toric geometry exchanges elliptic--elliptic singularities of the moment map for focus--focus singularities \cite{Zung03,MR815106} and vice-versa \cite{MR2670163}. These procedures rely heavily on the existence of a singular integral affine structure on the base. Following our general strategy, decoupling the geometric structure from the maps allows us to prove an abstract statement for boundary Lefschetz fibrations:
\begin{namedtheorem}[Theorem \ref{th:smoothing}]
Let $f\colon (M^4,D^2) \rightarrow (N^2,\partial N)$ be a boundary Lefschetz fibration, and let $p \in M$ be an elliptic--elliptic singularity. Then there exists a boundary Lefschetz fibration
\[{\tilde{f}\colon (M,\tilde{D}) \rightarrow (\tilde{N},\partial \tilde{N})}\]
agreeing with $f$ outside a neighbourhood of $p$, and such that the elliptic--elliptic singularity is traded for a Lefschetz singularity. The map $\tilde{f}$ is homologically essential if and only if $f$ is.
\end{namedtheorem}
The proof of this result, and its converse, Theorem~\ref{th:converse trade}, relies on the connected sum procedure and the existence of a particular boundary Lefschetz fibration on $S^4$.
\subsubsection*{Examples}
Using simple manifolds as building blocks, the connected sum procedure allows us to construct many examples of boundary Lefschetz fibrations (and consequently of self-crossing stable generalized complex structures) on the following manifolds:
\begin{namedtheorem}[Theorem \ref{th:examples}]
The manifolds in the following two families 
\begin{itemize}
	\item $X_{n,\ell} := \#n(S^2\times S^2)\#\ell(S^1\times S^3)$, with $n,\ell \in \mathbb{N}$;
	\item $Y_{n,m,\ell} := \#n \cc P^2 \#m \bar{\mathbb{C}P}^2\#\ell (S^1 \times S^3)$, with $n,m,\ell \in \mathbb{N}$,
\end{itemize}
admit homologically essential boundary fibrations whenever their Euler characteristic is non-negative. Therefore, each of these manifolds admits a compatible elliptic symplectic structure, which induces a self-crossing stable generalized complex structure if $1-b_1 + b_2^+$ is even.
\end{namedtheorem}
Combining this result with Theorem~\ref{th:smoothing} we conclude that the above manifolds moreover admit stable generalized complex structures with embedded degeneracy loci. These examples have already appeared in the literature \cite{CG09,Tor12,GH16, MR3177992} and the authors obtained them as well in \cite[Theorem 7.5]{CKW20}. However, the above result shows that the structures in these examples can be made compatible with boundary fibrations.
\subsection*{Organisation of the paper}
This paper is organised as follows. In Section~\ref{sec:divisors} we recall from \cite{CKW20} the notions of self-crossing divisors, and their associated Lie algebroids and symplectic structures. We also recall the definition of self-crossing stable generalized complex structures and that they are in one-to-one correspondence with particular self-crossing elliptic symplectic structures. In Section~\ref{sec:boundarymaps} we extend the notion of boundary (Lefshetz) fibration from \cite{CK18} to allow for self-crossing of the degeneracy locus. We moreover prove the Gompf--Thurston result, Theorem~\ref{th:main}. In Section~\ref{sec:connsum} we show that boundary Lefschetz fibrations allow for taking connected sums and prove Theorem~\ref{thm:glue}. In Section~\ref{sec:singtrades} we will prove the singularity trade results, namely Theorem~\ref{th:smoothing} and Theorem~\ref{th:converse trade}. Finally in Section~\ref{sec:examples} we show that torus actions give rise to boundary fibrations, and exhibit several examples, including Theorem~\ref{th:examples}.
\subsection*{Acknowledgements}
RK was supported by ERC consolidator grant 646649 ``SymplecticEinstein'', and by the FWO and FNRS under EOS Project No.~G0H4518N. AW was supported by the NWO through the UGC Graduate Programme. The authors would like to thank Marco Gualtieri, Conan Leung, Eduard Looijenga, Maarten Mol and Alvaro Pelayo for useful discussions, and Rafael Torres for pointing out the examples obtained in \cite{MR3177992}.
%
\section{Divisors, Lie algebroids and symplectic structures}\label{sec:divisors}
In this section we study geometric structures with specific singularities. To work with these singularities we will recall the concept of a divisor, and the particular cases of log and elliptic divisors which we will mainly use in this paper. Using these divisors we will recall the associated Lie algebroids and their Lie algebroid symplectic structures. Then we will introduce the objects which we want to construct in this paper, namely stable generalized complex structures. We will show that these structures correspond to certain Lie algebroid symplectic structures, which is how we will treat them in the rest of the paper.
\subsection{Divisors}
We will use an adaptation of the notion of a divisor from algebraic geometry to smooth manifolds in order to describe the singularities of geometric structures. We will only briefly go over the main concepts we need and refer to \cite{CK18,CG17,CKW20} for more information.
\begin{definition}
A \textbf{divisor} $(L,\sigma)$ consists of a real/complex line bundle $L \rightarrow M$ and a section $\sigma \in \Gamma(L)$ with nowhere dense zero set. Its \textbf{associated ideal} $I_\sigma$ is obtained as the image of the induced map $\sigma\colon \Gamma(L^*) \to \rr/\cc$. 
\end{definition}
The \textbf{product} of two divisors $(L_i,\sigma_i) \to M$ for $i = 1,2$ is the divisor $(L_1 \otimes L_2, \sigma_1 \otimes \sigma_2) \to M$ which has $I_{\sigma_1\otimes \sigma_2} = I_{\sigma_1} \cdot I_{\sigma_2}$ as associated ideal.
\begin{definition}
\begin{itemize}
\item A \textbf{morphism} between divisors $(L_i,\sigma_i) \to M_i$, for $i = 1,2$,  is a  smooth map $\varphi\colon M_1 \to M_2$ and a bundle isomorphism $\Phi\colon \varphi^*L_2 \to L_1$ such that $\Phi^*(\sigma_2) = \sigma_1$;
\item A \textbf{diffeomorphism} between divisors is a morphism for which the map $\varphi$ is a diffeomorphism;
\item Finally, two divisors over the same manifold, $(L_i,\sigma_i) \to M$, are \textbf{isomorphic} if there is a morphism $(\varphi,\Phi)$ for which $\varphi$ is the identity map.\qedhere
\end{itemize}
\end{definition}
The existence of a morphism $(\varphi,\Phi)\colon (L_1,\sigma_1)\to (L_2,\sigma_2)$ is equivalent to the existence of a smooth map $\varphi \colon M_1 \to M_2$ such that $\varphi^*I_{\sigma_2} = I_{\sigma_1}$. In particular, a divisor is determined up to isomorphism by its ideal, which allows us to use divisors and their associated ideals interchangeably. We will encounter several specific divisor types.
\begin{definition}
A \textbf{smooth real/complex log divisor} is a real/complex divisor $(L,\sigma)$ for which $\sigma$ vanishes transversely. 
\end{definition}
The \textbf{vanishing locus} $\sigma^{-1}(0)$ of a real log divisor has codimension one and is denoted by $Z$. The vanishing locus of a complex log divisor has codimension two and is denoted by $D$. By locally demanding a divisor to be a product of log divisors we obtain the following.
\begin{definition}\label{def:xlog}
A \textbf{self-crossing real/complex log divisor} on a manifold $M$ is a divisor $(L,\sigma)$, such that for every point $p\in M$ there exists a neighbourhood $U$ of $p$ such that 
\begin{equation*}
	I_{\sigma}(U) = I_1\cdot \ldots \cdot I_j,
\end{equation*}
where $I_1,\ldots,I_j$ are real/complex log ideals with transversely intersecting vanishing loci.
\end{definition}
A self-crossing real log divisor is determined up to isomorphism by its vanishing locus $Z$, as its ideal equals the ideal of functions vanishing on $Z$. In contrast, for a self-crossing complex log divisor, the subspace $D$ does not determine the divisor.
\begin{definition}[\cite{CG17}]
A \textbf{smooth elliptic divisor} is a divisor $(R,q)$ where $R$ is a real line bundle and $D = q^{-1}(0)$ is a codimension-two submanifold along which the normal Hessian of $q$ is definite.
\end{definition}
The normal Hessian of $q$ is the section $\text{Hess}(q) \in \Gamma(D;{\rm Sym}^2 N^*D \otimes R)$ containing the leading term of the Taylor expansion of $q$. Again by taking appropriate products we obtain the following.
\begin{definition}[\cite{CKW20}]\label{def:xelliptic}
A \textbf{self-crossing elliptic divisor} is a divisor $(R,q)$ on a manifold $M$, such that for every point $p\in M$ there exists a neighbourhood $U$ of $p$ such that
\begin{equation*}
	I_{q}(U) = I_1\cdot \ldots \cdot I_j,
\end{equation*}
where the $I_1,\ldots,I_j$ are smooth elliptic ideals with transversely intersecting vanishing loci.
\end{definition}
We mostly deal with self-crossing divisors in this paper, and we will often omit the prefix ``self-crossing''. Whenever we mean a smooth log or elliptic divisor we will explicitly stress this. 

The vanishing loci of both log and elliptic divisors are not embedded, but are stratified.
\begin{definition}
Let $I$ be a real/complex log or elliptic divisor on $M$ with vanishing locus $W$. The \textbf{intersection number} of a point $p \in M$ is the minimum of the integers $j$ from Definition~\ref{def:xlog} or Definition~\ref{def:xelliptic} over all neighbourhoods $U$ of $p$. The \textbf{intersection number of the divisor} is the maximum of the intersection numbers of all points $p \in M$. If $I$ has intersection number equal to $n$, the sets $W(j)$ of points of intersection number at least $j$ induce a filtration of $M$:
\begin{equation*}
	M = W(0) \supset W(1) = W \supset \cdots \supset W(n),
\end{equation*}
with induced \textbf{stratification} with strata $W[j]$ of points with intersection number precisely $j$.
\end{definition}
That this is a stratification follows readily from the normal forms of the divisors in Remark~\ref{rem:exampledivisors}. Also note that if a divisor $I$ has intersection number $n$ and $i \leq n$ is given, then the restriction $\rest{I}{M\backslash W(i+1)}$ defines a divisor with intersection number $i$.
\begin{remark}\label{rem:exampledivisors} There is a standard example for each of the divisor types described above.
\begin{itemize}
	\item The \textbf{standard real log divisor} with intersection number $j$ on $\rr^j \times \rr^m$ is defined using the coordinates $(x_1,\ldots, x_j,y_i)$ by the ideal $I_Z := \inp{x_1\cdot \ldots \cdot x_j}$;
	\item The \textbf{standard complex log divisor} with intersection number $j$ on $\cc^j \times \rr^m$ is defined using the coordinates $(z_1,\ldots, z_j,y_i)$ by the ideal $I_D := \inp{z_1\cdot \ldots\cdot z_j}$;
	\item The \textbf{standard elliptic divisor} with intersection number $j$ on $\rr^{2j} \times \rr^m$ is defined using the coordinates $(x_1,y_1,\ldots, x_j,y_j,u_i)$ by the ideal $I_{\abs{D}} := \inp{(x_1^2+y_1^2)\cdot \ldots \cdot (x_j^2+y_j^2)}$.
\end{itemize}
Each of these examples provides the local normal form for their associated divisor type.
\end{remark}
\begin{example}
Another important example for this paper is a manifold with corners $(M,\partial M)$. The boundary of a manifold with corners naturally defines a real log divisor.
\end{example}
Given a self-crossing complex log divisor $(L,\sigma)$, its \textbf{associated (self-crossing) elliptic divisor} is the real divisor $(R,q)$ defined by $(R,q) \otimes \underline{\cc} = (L,\sigma) \otimes (\bar{L},\bar{\sigma})$. 
\subsection{Lie algebroids and residue maps}\label{sec:liealg}
Each of the divisors introduced in the previous section gives rise to a corresponding Lie algebroid via the Serre--Swan theorem and the local normal forms contained in Remark~\ref{rem:exampledivisors}.
\begin{definition}
Let $I_Z$ be a real log divisor, $I_D$ a complex log divisor and $I_{\abs{D}}$ be an elliptic divisor. The vector fields preserving each of these ideals define Lie algebroids:
\begin{itemize}
	\item $\mathcal{A}_Z \to TM$, the \textbf{real log-tangent bundle} associated to $I_Z$;
	\item $\mathcal{A}_D \to T_\cc M$, the \textbf{complex log-tangent bundle} associated to $I_D$;
	\item $\mathcal{A}_{\abs{D}} \to TM$, the \textbf{elliptic tangent bundle} associated to $I_{\abs{D}}$.\qedhere
\end{itemize}
\end{definition}
\begin{remark}\label{rem:liealgebroidlocal} The above Lie algebroids can be described in the local coordinates of Remark~\ref{rem:exampledivisors}. Indeed, around a point of intersection number $j$ we have
\begin{itemize}
	\item $\Gamma(\mathcal{A}_Z) = \inp{x_1\partial_{x_1},\ldots,x_j\partial_{x_j},\partial_{y_i}}$;
	\item $\Gamma(\mathcal{A}_D) = \inp{z_1\partial_{z_1},\partial_{\bar{z}_1},\ldots,z_j\partial_{z_j},\partial_{\bar{z}_j},\partial_{y_i}}$;
	\item $\Gamma(\mathcal{A}_{\abs{D}}) = \inp{r_1\partial_{r_1},\partial_{\theta_i}, \ldots,r_j\partial_{r_j},\partial_{\theta_j},\partial_{u_i}}$.
\end{itemize}
In the latter case, we have $r_i\partial_{r_i} := x_i\partial_{x_i}+y_i\partial_{y_i}$ and $\partial_{\theta_i} := x_i\partial_{y_i}-y_i\partial_{x_i}$.
\end{remark}
When $I_D$ is a complex log divisor on $M$ and $I_{\abs{D}}$ is its associated elliptic divisor, there is a fibre product relation between the corresponding Lie algebroids as follows:
\begin{equation*}
	\mathcal{A}_{D} \times_{T_{\cc}M} \mathcal{A}_{\bar{D}} \cong \mathcal{A}_{\abs{D}} \otimes \cc.
\end{equation*}
This isomorphism provides an inclusion $\iota^*\colon \Omega^\bullet(\mathcal{A}_D) \to \Omega^\bullet_{\cc}(\mathcal{A}_{\abs{D}})$. 

We now turn to describing several of the residue maps carried by these Lie algebroids. Let $(I_{\abs{D}},\mathfrak{o})$ be a smooth elliptic divisor, together with a co-orientation of $D$. As explained in \cite{CG17}, the elliptic tangent bundle has an \textbf{elliptic} and \textbf{radial residue} map. These are maps of cochain complexes and they extract the coefficients of the singular generators. In the coordinates of Remark~\ref{rem:liealgebroidlocal} these are given by
\begin{equation}\label{eq:elliptic and radial}
\begin{aligned}
	\Res_q\colon \Omega^{\bullet}(\elli) \rightarrow \Omega^{\bullet-2}(D),& \qquad \Res_q(\alpha)= \iota^*_D(\iota_{r\partial_r\wedge\partial_{\theta}}\alpha),\\
	\Res_r\colon \Omega^{\bullet}(\elli) \rightarrow \Omega^{\bullet-1}(S^1ND),&\qquad \Res_r(\alpha) = \iota^*_D(\iota_{r\partial_r}\alpha),
\end{aligned}
\end{equation}
where $S^1ND \to D$ is the $S^1$-bundle associated to the co-orientation $\mathfrak{o}$ of $D$.

These residue maps can be extended to self-crossing elliptic divisors if we restrict our attention to the stratum $D[1]$. We say that a self-crossing elliptic divisor is \textbf{co-oriented} if the normal bundle $ND[1] \to D[1]$ is oriented.
\begin{definition}\label{def:zeroresform}
Let $(I_{\abs{D}},\mathfrak{o})$ be a co-oriented self-crossing elliptic divisor. The {\bf elliptic} and {\bf radial residues} of $\alpha \in \Omega^{\bullet}(\elli)$ are $\Res_q(\alpha) := \Res_q(\iota^*_{D[1]}\alpha)$ and $\Res_r(\alpha) := \Res_r(\iota^*_{D[1]}\alpha)$.
\end{definition}
In later constructions, the cohomology of the complex of forms with vanishing radial residue will play a role:
\begin{lemma}\label{lem:zerorescohom} Let $I_{\abs{D}}$ be a self-crossing elliptic divisor on a manifold $M$ and let $\Omega_{0,0}^{\bullet}(\elli) \subset \Omega^{\bullet}(\elli)$ be the subcomplex defined as the kernel of the map $\Res_r$. Then the inclusion map $i\colon M\backslash D \to M$ of the complement of $D$ induces a quasi-isomorphism $i^*\colon \Omega^{\bullet}_{0,0}(\elli) \rightarrow \Omega^{\bullet}(M\backslash D)$.
\end{lemma}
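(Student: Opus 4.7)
The plan is to establish the quasi-isomorphism by a Mayer--Vietoris reduction to the local model of Remark~\ref{rem:exampledivisors}, followed by a K\"unneth-type reduction to the smooth (non-self-crossing) elliptic case, which is known from \cite{CG17}.

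First I would verify that both cochain complexes admit Mayer--Vietoris for arbitrary open covers of $M$. On the right-hand side this is classical. On the left, smooth functions on $M$ embed as elliptic forms of degree zero, and multiplication by such a function preserves the vanishing of $\Res_r$ on $D[1]$; hence the usual partition-of-unity argument goes through for $\Omega^{\bullet}_{0,0}(\elli)$. Since $i^*$ is natural in $M$, a standard spectral sequence argument on a good cover reduces the lemma to proving it on each chart and each finite intersection of charts. Shrinking the cover, I may assume that every nonempty intersection is either contained in $M\setminus D$ (where $i^*$ is tautologically an equality of complexes) or is an open subset of the standard model $\rr^{2j}\times\rr^m$ of Remark~\ref{rem:exampledivisors}.

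On the local model $\rr^{2j}\times\rr^m$ with the standard self-crossing elliptic divisor, $M\setminus D$ deformation retracts onto $(S^1)^j$. The elliptic Lie algebroid is the exterior direct product of single-branch elliptic tangent bundles on each $\rr^2$ factor together with $T\rr^m$, giving a K\"unneth decomposition of $\Omega^{\bullet}(\elli)$. Since $D[1]$ is the disjoint union of the open strata of the $j$ branches and $\Res_r$ decomposes accordingly as a sum of single-branch radial residues, the subcomplex $\Omega^{\bullet}_{0,0}(\elli)$ should split as the tensor product of the single-branch subcomplexes together with $\Omega^{\bullet}(\rr^m)$. Each tensor factor is the smooth elliptic case on $\rr^2$, for which \cite{CG17} yields a quasi-isomorphism $\Omega^{\bullet}_{0,0}(\mathcal{A}_{|D_i|}) \to \Omega^{\bullet}(\rr^2\setminus 0)$; the Eilenberg--Zilber theorem then yields the desired quasi-isomorphism on the local model.

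The main technical obstacle will be ensuring that the K\"unneth decomposition truly respects the subcomplex condition. Vanishing of $\Res_r$ is imposed only on the open stratum $D[1]$, whereas the K\"unneth decomposition naturally factors forms branch by branch; the point to check is that if $\alpha$ satisfies $\Res_r\alpha = 0$, then each of its branch-wise residues, when extracted via the K\"unneth isomorphism, vanishes on the entire smooth branch $D_i$ and not merely on its open stratum. By smoothness of coefficients and density of $D[1]\cap D_i$ inside $D_i$, this should follow automatically, but it requires a careful unpacking of the K\"unneth formula and matching of the subcomplex conditions on both sides.
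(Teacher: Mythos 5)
Your overall strategy mirrors the paper's: both proofs reduce the global statement to a local computation over the standard models of Remark~\ref{rem:exampledivisors}. The paper cites Deligne's observation that it suffices to check the map induces an isomorphism of cohomology sheaves (both complexes consist of fine sheaves), whereas you phrase this as a Mayer--Vietoris/good-cover spectral sequence argument; these are essentially the same local-to-global principle, and your verification that $\Omega^{\bullet}_{0,0}(\elli)$ admits partitions of unity is exactly what makes either version work.

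Where the two proofs genuinely diverge is in the local computation. The paper simply asserts that $H^{\bullet}_{0,0}(U,\elli)$ is the free (super)algebra on $\{1,d\theta_1,\ldots,d\theta_j\}$ and matches this against $H^{\bullet}(\mathbb{T}^j)$. You instead decompose $\elli$ over $\rr^{2j}\times\rr^m$ as an exterior direct product, identify $\Omega^{\bullet}_{0,0}(\elli)$ with the completed tensor product of the single-branch subcomplexes, and invoke the smooth elliptic case of \cite{CG17}. This is a legitimate alternative and, if anything, more self-contained than the paper's unproven assertion. Two points deserve care, though. First, you correctly flag that the subcomplex condition is imposed only on $D[1]$ while the tensor decomposition is branch-wise; the density of $D[1]\cap D_i$ in each branch $D_i$ of the local model together with continuity of coefficients does indeed promote vanishing on the open stratum to vanishing on all of $D_i$, so your observation closes that gap. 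Second, the reference to Eilenberg--Zilber is slightly misplaced: that theorem concerns singular chains, whereas here you need a K\"unneth theorem for completed tensor products of (nuclear Fr\'echet) cochain complexes — that is, a de Rham--type K\"unneth statement. The statement you need is standard (quasi-isomorphisms are preserved under $\hat\otimes$ with nuclear Fr\'echet complexes), but it should be cited or argued as such rather than attributed to Eilenberg--Zilber. With these two points addressed, your proof is a valid and somewhat more explicit alternative to the paper's.
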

\begin{proof}
The argument uses the observation from \cite{Del71} that it suffices to show that $\iota^*$ induces an isomorphism on the level of sheaf cohomology. Below we will implicitly identify the sheaf $\Omega^{\bullet}(M\backslash D)$ with its push-forward $\iota_*(\Omega^{\bullet}(M\backslash D))$. For all points $p \in M\backslash D$ there exists a contractible open neighbourhood $U$ of $p$ disjoint from $D$. On this open $\elli = TM$ and $\Res_r \equiv 0$, and therefore $\iota_*$ is simply the identity. Let $j$ be any integer less than or equal to the intersection number of $D$, take $p \in D[j]$ and let $U$ be a contractible open around $p$ as in Remark~\ref{rem:liealgebroidlocal}. In those coordinates, $H^\bullet_{0,0}(U,\elli)$ is the free algebra generated by $\set{1,d\theta_1,\ldots,d\theta_j}$. By an elementary argument $U\backslash D$ is homotopic to $\mathbb{T}^j$, and using this homotopy $\iota^*$ takes the generators of $H^{\bullet}_{0,0}(\elli)$ to the generators of $H^{\bullet}(U\backslash D)$. Therefore we conclude that $\iota^*$ is a local isomorphism, and consequently a global isomorphism.
\end{proof}
We will need a few more residue maps for self-crossing elliptic divisors:
\begin{definition}[\cite{CKW20}]
Let $(I_{\abs{D}},\mathfrak{o})$ be a co-oriented elliptic divisor and let $\omega \in \Omega^2(\elli)$. Let $p \in D(k)$ with $k\geq 2$ and consider oriented coordinates as in Remark~\ref{rem:liealgebroidlocal}. We define
\begin{equation*}
	\Res_{r_ir_j}\omega(p) := \omega_p(r_i\partial_{r_i},r_j\partial_{r_j}),~ \Res_{r_i\theta_j}\omega(p) := \omega_p(r_i\partial_{r_i},\partial_{\theta_j}),~ \Res_{\theta_i\theta_j}\omega(p) := \omega_p(\partial_{\theta_i},\partial_{\theta_j}). \qedhere
\end{equation*}
These pointwise expressions depend on $\mathfrak{o}$ and the ordering of coordinates, but only up to sign.
\end{definition}
\subsection{Lie algebroid symplectic structures}
We will use the language of symplectic Lie algebroids to translate certain Poisson and generalized complex structures into simpler Lie algebroid objects. Given a Lie algebroid two-form $\omega \in \Omega^2(\mathcal{A})$, we say it is nondegenerate if $\omega^{\flat}\colon \mathcal{A} \rightarrow \mathcal{A}^*$ is an isomorphism.
\begin{definition} Let $I_Z$ and $I_{|D|}$ be real log and elliptic divisors on a given manifold $M$. Then:
\begin{itemize}
		\item A form $\omega \in \Omega^2(\mathcal{A}_Z)$ is \textbf{log-symplectic} if $d \omega = 0$ and it is nondegenerate;
		\item A form $\omega \in \Omega^2(\mathcal{A}_{|D|})$ is \textbf{elliptic symplectic} if $d\omega = 0$ and it is nondegenerate.\qedhere
\end{itemize}
\end{definition}
One can prove Darboux-type normal form theorems for symplectic Lie algebroids using a thorough understanding of their Lie algebroid cohomology, by a straightforward adaptation of the Moser lemma. However, in the above cases this cohomology is generally locally non-trivial, so that there is no unique local model. In dimension two we have the following:
\begin{lemma}\label{lem:logsympnormalform} Let $I_Z$ be a real log divisor on $\Sigma^2$ and let $\omega \in \Omega^2(\mathcal{A}_Z)$ be a log-symplectic form. For each point $p \in Z[2]$ there are coordinates $(x_1,x_2)$ centered at $p$ and $\lambda \in \rr$ such that
\begin{align*}
	\omega = \lambda d\log x_1 \wedge d\log x_2.
\end{align*}
\end{lemma}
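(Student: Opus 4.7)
The plan is to apply a Moser-type argument in the log algebroid setting. First, using the local normal form of Remark~\ref{rem:exampledivisors} applied to the self-crossing real log divisor at a point $p\in Z[2]$, I choose coordinates $(x_1,x_2)$ centered at $p$ in which $I_Z=\langle x_1 x_2\rangle$, so that by Remark~\ref{rem:liealgebroidlocal} the sheaf $\Gamma(\mathcal{A}_Z)$ is locally generated by $x_1\partial_{x_1}$ and $x_2\partial_{x_2}$, with dual algebroid frame $d\log x_1, d\log x_2$. Because we are in algebroid rank two, any two-form is automatically closed, and nondegeneracy forces
\[
\omega \;=\; f(x_1,x_2)\, d\log x_1\wedge d\log x_2
\]
for some smooth function $f$ with $f(0,0)\neq 0$. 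I then set $\lambda:=f(0,0)$, which, up to sign, is the residue of $\omega$ at the corner.

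Next I produce the model form $\omega_1:=\lambda\, d\log x_1\wedge d\log x_2$ and interpolate by $\omega_t:=\omega_1+t(\omega-\omega_1)$ on a small enough neighbourhood of $p$ that $\omega_t$ is nondegenerate as an algebroid form for all $t\in[0,1]$; this is possible since $f-\lambda$ vanishes at the origin. The key step is a log Poincar\'e lemma: I need to write $\omega-\omega_1=d\alpha$ for an algebroid one-form $\alpha\in\Omega^1(\mathcal{A}_Z)$ that vanishes at $p$. Writing $\alpha=g_1\,d\log x_1+g_2\,d\log x_2$, an elementary computation in the log complex gives
\[
d\alpha \;=\; \bigl(x_1\partial_{x_1}g_2-x_2\partial_{x_2}g_1\bigr)\, d\log x_1\wedge d\log x_2,
\]
so I must solve $x_1\partial_{x_1}g_2-x_2\partial_{x_2}g_1=f-\lambda$. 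Since $f(0,0)=\lambda$, the function $f-\lambda$ lies in the maximal ideal at $0$, and Hadamard's lemma lets me write $f-\lambda=x_1 A(x_1,x_2)+x_2 B(x_1,x_2)$ with smooth $A,B$; I then take $g_2(x_1,x_2):=\int_0^{x_1}A(s,x_2)\,ds$ and $g_1(x_1,x_2):=-\int_0^{x_2}B(x_1,s)\,ds$, both of which vanish at the origin.

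Finally, I apply Moser: define $X_t:=-(\omega_t^\flat)^{-1}(\alpha)\in\Gamma(\mathcal{A}_Z)$, which is well defined since $\omega_t^\flat$ is an isomorphism on the shrunken neighbourhood. Because $X_t$ is a section of $\mathcal{A}_Z$, its anchor $\rho(X_t)$ is a genuine vector field tangent to the stratification of $Z$, and it vanishes at the corner (both because the generators $x_i\partial_{x_i}$ vanish there under the anchor, and because $\alpha$ vanishes at $p$). Consequently the flow of $\rho(X_t)$ exists on a neighbourhood of $p$, fixes $p$, preserves $I_Z$, and by the standard Moser computation pulls back $\omega$ to $\omega_1=\lambda\, d\log x_1\wedge d\log x_2$; composing the original chart with this flow yields the desired coordinates.

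The main obstacle is the log Poincar\'e lemma in the second step: the cohomology $H^2(\mathcal{A}_Z)$ is locally nontrivial at a corner (it is one-dimensional, generated by $d\log x_1\wedge d\log x_2$), so the constant $\lambda$ really must be chosen as the residue of $\omega$ for exactness to hold, and one then has to carry out the Hadamard-type splitting to solve for $\alpha$ explicitly while keeping $\alpha(p)=0$.
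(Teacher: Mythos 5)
The paper states Lemma~\ref{lem:logsympnormalform} without a written-out proof, remarking only in the preceding paragraph that such Darboux-type normal forms follow from a straightforward Moser argument once one accounts for the nontrivial local Lie algebroid cohomology. Your proof is correct and carries out exactly this strategy: you reduce $\omega = f\,d\log x_1 \wedge d\log x_2$, identify $\lambda = f(0,0)$ as the residue (the generator of the one-dimensional local $H^2(\mathcal{A}_Z)$), produce an explicit algebroid primitive of $\omega - \lambda\,d\log x_1 \wedge d\log x_2$ via Hadamard's lemma with $\alpha(p)=0$, and run the Moser flow, which exists near $p$ because the anchor of any section of $\mathcal{A}_Z$ vanishes at the corner.
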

Since in two dimensions every nowhere zero two-form is closed and nondegenerate we have the following source of examples of log-symplectic manifolds:
\begin{lemma}\label{lem:logsympexistence}
Let $\Sigma^2$ be a compact oriented surface with corners. Then $(\Sigma,I_{\partial \Sigma})$ admits a log-symplectic structure.
\end{lemma}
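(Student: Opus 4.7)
The plan is to exploit the fact that $\mathcal{A}_{\partial\Sigma}$ has rank $2$, so that the desired log-symplectic form is precisely a nowhere-vanishing section of the real line bundle $\Lambda^2 \mathcal{A}_{\partial\Sigma}^*$: closedness in top degree is automatic, and nondegeneracy of a Lie algebroid $2$-form on a rank-$2$ algebroid is equivalent to nowhere-vanishing. I would produce such a section by choosing a locally finite cover of $\Sigma$ by oriented charts adapted to the divisor, writing down a positive log-symplectic normal form in each chart, and averaging via a partition of unity.

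Concretely, I would cover $\Sigma$ by positively-oriented charts in which either $I_{\partial\Sigma}$ is trivial (interior charts with coordinates $(x,y)$), or $I_{\partial\Sigma}$ has intersection number one (smooth boundary charts with oriented coordinates $(x_1,y_1)$ and $x_1\ge 0$ defining the boundary), or $I_{\partial\Sigma}$ has intersection number two (corner charts with oriented coordinates $(x_1,x_2)$ and both $x_i\ge 0$ defining the boundary), and use the local forms $\omega_\alpha = dx\wedge dy$, $\omega_\alpha = d\log x_1\wedge dy_1$, and $\omega_\alpha = d\log x_1\wedge d\log x_2$ respectively, following Lemma~\ref{lem:logsympnormalform} and Remark~\ref{rem:exampledivisors}. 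The crucial claim is that the orientation of $\Sigma$ orients the line bundle $\Lambda^2 \mathcal{A}_{\partial\Sigma}^*$ consistently, in the sense that on any overlap $U_\alpha\cap U_\beta$ the ratio $\omega_\alpha/\omega_\beta$ is a strictly positive smooth function. Granting this, if $\{\rho_\alpha\}$ is a subordinate partition of unity, then on each $U_\beta$ the sum $\omega := \sum_\alpha \rho_\alpha \omega_\alpha$ equals $\bigl(\sum_\alpha \rho_\alpha\cdot(\omega_\alpha/\omega_\beta)\bigr)\omega_\beta$ with strictly positive scalar coefficient, hence is a nowhere-vanishing, automatically closed section of $\Lambda^2 \mathcal{A}_{\partial\Sigma}^*$, which is a log-symplectic structure.

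The single step requiring any actual computation, and hence the main technical point, is this positivity of the transition coefficients between the local normal forms. Away from the boundary $\omega_\alpha$ and $\omega_\beta$ correspond under the anchor to genuine smooth volume forms on $\Sigma$, so their ratio is the Jacobian determinant of the coordinate change divided by a factor $x_1'/x_1 = a > 0$; orientation-preservation of the transition and positivity of $a$ (two nonnegative local defining functions of the same boundary component differ by a positive smooth factor) yield a positive ratio on the interior, and this ratio extends smoothly to the boundary, where it is computed directly from $dx_1 = x_1\,d\log x_1$ to be positive as well. The analogous check at corner charts and in mixed overlaps is handled by the same manipulations with logarithmic one-forms, after which the partition-of-unity argument above goes through verbatim.
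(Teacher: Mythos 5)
Your argument is correct, but it takes a more elaborate route than the paper's. The paper gives a one-line global construction: since $\Sigma$ is a compact manifold with corners, the boundary ideal is globally principal, so choose a defining function $h$ with $I_{\partial\Sigma}=\langle h\rangle$, and since $\Sigma$ is oriented choose a volume form $\omega\in\Omega^2(\Sigma)$; then $h^{-1}\omega\in\Omega^2(\mathcal{A}_{\partial\Sigma})$ is nowhere vanishing (hence nondegenerate) and closed for dimensional reasons. You instead reconstruct a nowhere-vanishing section of the line bundle $\Lambda^2\mathcal{A}_{\partial\Sigma}^*$ by patching local normal forms with a partition of unity, and the burden of the proof shifts to checking that orientation-preserving, divisor-adapted transitions have positive coefficient ratios. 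That check is correct (and is essentially the verification that $\Lambda^2\mathcal{A}_{\partial\Sigma}^*$ inherits an orientation from $\Lambda^2 T^*\Sigma$ via the top power of the anchor), but the paper's choice of a global $h$ and global $\omega$ packages the entire positivity argument into those two choices, so the computation disappears. Your local-to-global approach is the one you'd reach for if the divisor line bundle were not known to be trivial; here the corner structure makes it trivial outright, and the paper exploits that.
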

\begin{proof} The ideal $I_{\partial \Sigma}$ defines a real log divisor. Because $\Sigma$ is oriented, let $h \in C^{\infty}(M)$ be a defining function for $\partial M$, so that $I_{\partial \Sigma} = \langle h \rangle$, and let $\omega \in \Omega^2(\Sigma)$ be a volume form. Then $h^{-1}\omega \in \Omega^2(\mathcal{A}_{\partial M})$ is a nondegenerate log two-form that is closed for dimensional reasons.
\end{proof}
\subsection{Self-crossing stable generalized complex structures}
In this section we recall the notion of a self-crossing stable generalized complex structure \cite{CKW20}. This is a well-behaved class of generalized complex structures \cite{Gua07}, i.e.\ complex structures on the bundle $\mathbb{T}M:= TM \oplus T^*M$. We furthermore recall that they are equivalent to certain elliptic symplectic structures.
\begin{definition}
A \textbf{generalized complex structure} on a manifold $M$ is a pair $(\mathbb{J},H)$ where $H \in \Omega^3(M)$ is a closed three-form and $\mathbb{J}$ is an endomorphism of $\mathbb{T}M$ for which $\mathbb{J}^2 = -\rm{Id}$ and the $+i$-eigenbundle, $L \subset (\mathbb{T}M) \otimes \cc$, is involutive with respect to the Dorfman bracket:
\begin{equation*}
	[\![X+\xi,Y+\eta]\!]_H := [X,Y] + \mathcal{L}_X\eta - \iota_Yd\xi + \iota_X\iota_Y H, \qquad X+\xi, Y+\eta \in \Gamma(\mathbb{T}M).\qedhere
\end{equation*}
\end{definition}
Two generalized complex structures $(\mathbb{J},H)$ and  $(\mathbb{J}',H')$ are \textbf{gauge equivalent} if there exists $B \in \Omega^2(M)$ such that $H' = H + dB$ and, using the associated map $B^\flat\colon TM \to T^*M$, we have
\begin{equation*}
	\mathbb{J}' = \left(\begin{smallmatrix}
	1 & 0 \\
	B^{\flat} & 1
	\end{smallmatrix}
	\right)
	\mathbb{J}
	\left(
	\begin{smallmatrix}
	1 & 0 \\
	-B^{\flat} & 1
	\end{smallmatrix}
	\right).
\end{equation*}
\begin{lemma}[\cite{MR2861779}]
Let $\mathbb{J} = \left(\begin{smallmatrix}
	J & \pi^{\sharp}_{\mathbb{J}} \\
	\sigma^{\flat} & -J^*
	\end{smallmatrix}
	\right)$
be a generalized complex structure on $M$. Then $\pi_{\mathbb{J}} \in \mathfrak{X}^2(M)$ is a Poisson structure on $M$. Moreover, if $\mathbb{J}$ and $\mathbb{J}'$ are gauge equivalent, then $\pi_{\mathbb{J}} = \pi_{\mathbb{J}'}$.
\end{lemma}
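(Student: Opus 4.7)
The plan is to deduce the Jacobi identity for $\pi_{\mathbb{J}}$ from the involutivity of the $+i$-eigenbundle of $\mathbb{J}$, and to check the gauge-invariance claim by a short matrix computation showing that a $B$-transform leaves the upper-right block of $\mathbb{J}$ unchanged. As preparation I would note that the orthogonality of $\mathbb{J}$ with respect to the canonical split-signature pairing $\langle X+\xi,Y+\eta\rangle = \tfrac{1}{2}(\xi(Y)+\eta(X))$, combined with $\mathbb{J}^2 = -\id$, forces the off-diagonal blocks to be skew-adjoint; so $\pi^{\sharp}_{\mathbb{J}}$ really comes from a bivector $\pi_{\mathbb{J}}\in\mathfrak{X}^2(M)$, and the identity $J\pi^{\sharp}_{\mathbb{J}} = \pi^{\sharp}_{\mathbb{J}} J^*$ also falls out of the same matrix equation.

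To show that $\pi_{\mathbb{J}}$ is Poisson, for each real $1$-form $\xi$ I would form the complex section
\[ e_\xi := \mathbb{J}\xi + i\xi \;=\; \pi^{\sharp}_{\mathbb{J}}\xi \,+\, \bigl(-J^*\xi + i\xi\bigr), \]
which satisfies $\mathbb{J} e_\xi = i e_\xi$ by a direct computation using $\mathbb{J}^2=-\id$, and hence lies in the $+i$-eigenbundle $L\subset (\mathbb{T}M)\otimes \cc$. Involutivity of $L$ under the $H$-Dorfman bracket then yields $[\![e_\xi, e_\eta]\!]_H \in L$. Projecting onto $TM\otimes \cc$, the tangent component is $[\pi^{\sharp}_{\mathbb{J}}\xi, \pi^{\sharp}_{\mathbb{J}}\eta]$ directly from the Dorfman formula, while membership in $L$ forces this same tangent component to equal $\pi^{\sharp}_{\mathbb{J}}\gamma$, where $\gamma$ is the $T^*M$-component of $[\![e_\xi, e_\eta]\!]_H$. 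A careful inspection, separating real and imaginary parts using $J\pi^{\sharp}_{\mathbb{J}} = \pi^{\sharp}_{\mathbb{J}}J^*$, identifies $\gamma$ with (a complexification of) the Koszul bracket $\{\xi,\eta\}_{\pi_{\mathbb{J}}} := \mathcal{L}_{\pi^{\sharp}_{\mathbb{J}}\xi}\eta - \iota_{\pi^{\sharp}_{\mathbb{J}}\eta}\,d\xi$. The resulting identity $[\pi^{\sharp}_{\mathbb{J}}\xi, \pi^{\sharp}_{\mathbb{J}}\eta] = \pi^{\sharp}_{\mathbb{J}}\{\xi,\eta\}_{\pi_{\mathbb{J}}}$ is the Lie algebroid morphism property for $\pi^{\sharp}_{\mathbb{J}}$, which is equivalent to the Schouten bracket $[\pi_{\mathbb{J}},\pi_{\mathbb{J}}]$ vanishing.

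For the gauge-equivalence statement I would simply compute that right multiplication by $\bigl(\begin{smallmatrix}1 & 0\\ -B^{\flat} & 1\end{smallmatrix}\bigr)$ modifies only the first block-column of $\mathbb{J}$, and left multiplication by $\bigl(\begin{smallmatrix}1 & 0\\ B^{\flat} & 1\end{smallmatrix}\bigr)$ only the second block-row. The upper-right block of $\mathbb{J}'$ is therefore identical to that of $\mathbb{J}$, namely $\pi^{\sharp}_{\mathbb{J}}$, and so $\pi_{\mathbb{J}'} = \pi_{\mathbb{J}}$. The main obstacle in the above will be the identification of $\gamma$ with the Koszul bracket in the Poisson step: extracting $\gamma$ from the Dorfman expression cleanly requires using the $+i$-eigenvalue equation together with the algebraic constraints from $\mathbb{J}^2 = -\id$, and verifying that the $H$-twist contributes only to the $T^*M$-part of $[\![e_\xi,e_\eta]\!]_H$ and is absorbed into $\gamma$ without altering the tangent-component identity.
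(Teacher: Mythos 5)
The paper does not give a proof of this lemma: it is imported wholesale from \cite{MR2861779}, so your attempt has to be judged on its own merits rather than against an internal argument.

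Your overall strategy is the right one, and the gauge-invariance check is exactly correct: right multiplication affects only the first block-column and left multiplication only the second block-row, so the $(1,2)$-block $\pi^\sharp_{\mathbb{J}}$ is untouched and $\pi_{\mathbb{J}'}=\pi_{\mathbb{J}}$.

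There is, however, one concrete misstep in the Poisson part. Membership of $Z+\gamma := [\![e_\xi, e_\eta]\!]_H$ in $L$, with $Z \in T_\cc M$ and $\gamma \in T^*_\cc M$, does \emph{not} give $\pi^\sharp_{\mathbb{J}}\gamma = Z$. Reading off the tangent component of the eigenvalue equation $\mathbb{J}(Z+\gamma) = i(Z+\gamma)$ gives $JZ + \pi^\sharp_{\mathbb{J}}\gamma = iZ$, i.e.\ $\pi^\sharp_{\mathbb{J}}\gamma = (i\,\mathrm{id} - J)Z$; your claimed identity cannot hold in general, since $\xi \mapsto e_\xi$ fails to be an isomorphism $T^*_\cc M \to L$ precisely where the type is positive (its kernel is $\bar{L}\cap T^*_\cc M$). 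The fix is to use that $Z = [\pi^\sharp_{\mathbb{J}}\xi,\pi^\sharp_{\mathbb{J}}\eta]$ is a \emph{real} vector field when $\xi,\eta$ are real: taking imaginary parts of $\pi^\sharp_{\mathbb{J}}\gamma = (i\,\mathrm{id} - J)Z$ yields $\pi^\sharp_{\mathbb{J}}(\im\gamma) = Z$, and $\im\gamma = \mathcal{L}_{\pi^\sharp_{\mathbb{J}}\xi}\eta - \iota_{\pi^\sharp_{\mathbb{J}}\eta}d\xi$ is exactly your Koszul bracket $\{\xi,\eta\}_{\pi_{\mathbb{J}}}$, while the $H$-contribution $\iota_{\pi^\sharp_{\mathbb{J}}\xi}\iota_{\pi^\sharp_{\mathbb{J}}\eta}H$ is real and so drops out of $\im\gamma$. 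You do flag the need to ``separate real and imaginary parts'' at the end, so you may have this in mind, but the intermediate assertion $\pi^\sharp_{\mathbb{J}}\gamma = [\pi^\sharp_{\mathbb{J}}\xi,\pi^\sharp_{\mathbb{J}}\eta]$ is false as written and has to be replaced by the imaginary-part version before the Koszul morphism identity, and hence $[\pi_{\mathbb{J}},\pi_{\mathbb{J}}]=0$, can be concluded.
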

Given an element $X + \xi \in \mathbb{T}_{\cc}M$, let $(X+\xi) \cdot \rho := \iota_X \rho + \xi \wedge \rho$ denote the Clifford action of $\mathbb{T}M$ on elements $\rho \in \wedge^{\bullet}T^*_{\cc}M$. A generalized complex structure $\mathbb{J}$ is alternatively characterised by its \textbf{canonical bundle} $K \subset \wedge^{\bullet}T^*_{\cc}M$ defined by the relation
\begin{equation*}
L = \set{u \in \mathbb{T}_{\cc}M : u \cdot K = 0}.
\end{equation*}
Its dual carries a natural section $s \in \Gamma(K^*)$, given by the map which sends $\rho \in \Gamma(K)$ to its degree zero part, and is called the \textbf{anticanonical section} of $\mathbb{J}$. The pair $(K^*,s)$, called the \textbf{anticanonical divisor}, can be used to define a specific class of generalized complex structures:
\begin{definition}[\cite{CKW20}] Let $M$ be a manifold and $H \in \Omega^3(M)$ closed. A generalized complex structure $\mathbb{J}$ on $(M,H)$ is \textbf{stable with self-crossings} if $(K^*,s)$ defines a self-crossing complex log divisor. 
\end{definition}
As before we will often simply call these structures ``stable'', and when their divisor is in fact smooth we will explicitly stress this.

If $\mathbb{J}$ is a stable generalized complex structure on $M$, one can show that $\pi_{\mathbb{J}}$ admits a non-degenerate lift to $\elli$, the elliptic tangent bundle with respect to the elliptic divisor induced by $(K^*,s)$. Inverting this non-degenerate lift results in an elliptic symplectic form $\omega \in \Omega^2(\elli)$. Under certain conditions this procedure can be reversed:
\begin{theorem}[\cite{CKW20}]\label{th:correspondence}
Let $M$ be a manifold. There is a one-to-one correspondence between gauge equivalence classes of stable generalized complex structures on $M$ and isomorphism classes of co-oriented elliptic divisors together with an elliptic symplectic form $\omega \in \Omega^2(\elli)$, satisfying
\begin{itemize}
\item $\Res_q(\omega) = 0$,
\item $\Res_{\theta_ir_j}(\omega) = \Res_{r_i\theta_j}(\omega)$,
\item $\Res_{r_ir_j}(\omega) = -\Res_{\theta_i\theta_j}(\omega)$.
\end{itemize} 
 Explicitly this map is given by
\[
\left\{
\begin{array}{c}
(\mathbb{J}, H)\colon\\
\mathbb{J} \mbox{  is a stable GCS}
\end{array}
\right\}
 \to
 \left\{
\begin{array}{c}
(I_{|D|}, \mathfrak{o}, \pi_{\mathbb{J}}^{-1})\colon\\
(I_{|D|}, \mathfrak{o}) \mbox{ is a co-oriented elliptic divisor and }\\
\pi_{\mathbb{J}}^{-1} \mbox{ is an elliptic symplectic form satisfying the above relations}
\end{array}
\right\}.
\]
Here $(I_{\abs{D}},\mathfrak{o})$ is the co-oriented elliptic divisor induced by the anti-canonical divisor.
\end{theorem}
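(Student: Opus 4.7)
The plan is to construct the forward map $\mathbb{J} \mapsto (I_{|D|}, \mathfrak{o}, \pi_{\mathbb{J}}^{-1})$, verify that its image satisfies the three residue conditions, and then invert the construction, checking well-definedness on equivalence classes.

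For the forward direction, the anti-canonical divisor $(K^*, s)$ of a stable generalized complex structure $(\mathbb{J}, H)$ is by definition a self-crossing complex log divisor, so its associated self-crossing elliptic divisor $(I_{|D|}, \mathfrak{o})$ is produced directly from $(K^*, s) \otimes (\overline{K^*}, \bar{s})$, with the co-orientation of $D[1]$ inherited from the complex orientation on $K^*$. The key analytic step is to lift the Poisson bivector $\pi_{\mathbb{J}}$ to a non-degenerate section of $\wedge^2 \elli$, which I would do stratum by stratum using the standard local normal form of a stable GCS near points of $D[j]$ as a product of $j$ type-change models with a symplectic factor. Inverting this lift yields $\omega = \pi_{\mathbb{J}}^{-1} \in \Omega^2(\elli)$, closed because $\pi_{\mathbb{J}}$ is Poisson.

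Next I would verify the three residue identities in the coordinates of Remark~\ref{rem:liealgebroidlocal} around a point of $D[j]$. The vanishing $\Res_q(\omega)=0$ comes from the $S^1$-invariance of the local type-change model. The relations $\Res_{\theta_i r_j}(\omega) = \Res_{r_i \theta_j}(\omega)$ and $\Res_{r_i r_j}(\omega) = -\Res_{\theta_i \theta_j}(\omega)$ reflect that the pair $(r_i \partial_{r_i}, \partial_{\theta_i})$ is the real-imaginary decomposition of the holomorphic vector field $z_i \partial_{z_i}$, so the pairings between different transverse planes inherit the $J$-linearity of the ambient holomorphic divisor structure; this reduces to a routine computation in the local model. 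For the reverse direction, given a triple $(I_{|D|}, \mathfrak{o}, \omega)$ satisfying the three relations, I would first take the anchor image $\pi = \rho(\omega^{-1})$, where $\rho\colon \elli \to TM$ is the anchor, and then reconstruct $\mathbb{J}$ by building a complex log structure on the would-be anti-canonical bundle compatible with $\omega$: the residue conditions are exactly what is needed to make this complex log lift well-defined and smooth across the self-crossing strata. Gauge equivalence shifts $H$ by an exact form and conjugates $\mathbb{J}$ by a $B$-field, which preserves both $\pi_{\mathbb{J}}$ and its elliptic lift, so the correspondence descends to equivalence classes.

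The main obstacle is the sufficiency of the residue identities: showing that the locally reconstructed $\mathbb{J}$ really assembles into a global stable generalized complex structure across strata of varying intersection number. The symmetry and sign relations should be interpretable as the obstruction cocycle for lifting a co-oriented self-crossing elliptic divisor to a self-crossing complex log divisor, so the delicate point is identifying these cocycle classes with the explicit residues and then deducing that their vanishing forces integrability of the $+i$-eigenbundle under the $H$-twisted Dorfman bracket. Once this identification is carried out, injectivity follows because $(I_{|D|}, \omega)$ determines $\mathbb{J}$ up to a $B$-field transformation, and surjectivity follows from the explicit local reconstruction above.
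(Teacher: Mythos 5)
The paper does not prove Theorem~\ref{th:correspondence}: it is imported verbatim as a cited result from \cite{CKW20}, so there is no argument in this document against which your attempt can be compared. The statement appears in the preliminaries section precisely so that the later results of the paper can invoke it as a black box. Given that, I can only assess your sketch on its own terms and against what is explicitly stated around the theorem.

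On that basis, your outline is broadly consonant with how the correspondence is set up here: the elliptic divisor is $(K^*,s)\otimes(\bar K^*,\bar s)$, the form $\omega$ is the inverse of the nondegenerate elliptic lift of $\pi_{\mathbb{J}}$, closedness comes from the Poisson condition, and gauge invariance of $\pi_{\mathbb{J}}$ is exactly Lemma~\ref{lem:boundarymapmorphism}'s neighbor, the lemma cited from \cite{MR2861779}. Your explanation of the two sign relations via $z_i\partial_{z_i}= r_i\partial_{r_i} + i\,\partial_{\theta_i}$ and $\omega = \mathrm{Im}(\tau\, d\log z_1\wedge d\log z_2)$ matches Example~\ref{ex:basicexample} and is the right mechanism. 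Two points warrant attention. First, the co-orientation $\mathfrak{o}$ is not ``inherited from the complex orientation on $K^*$'' generically; the paper specifies it comes from the isomorphism $d^{\nu}s\colon ND|_{D[1]}\simeq K^*|_{D[1]}$, i.e.\ from the normal derivative of the anti-canonical section, which is a more precise (and more restrictive) statement than what you wrote. Second, your inverse direction --- ``build a complex log structure on the would-be anti-canonical bundle compatible with $\omega$; the residue conditions are exactly what is needed'' --- is where the actual mathematical content of the theorem lives, and you explicitly flag it as the main obstacle without resolving it. That is the part that cannot be checked without \cite{CKW20}; Corollary~\ref{cor:ellitocomplex} in the present paper already shows that the residue conditions alone are \emph{not} sufficient globally and an extra parity hypothesis on the components of $D$ is needed to actually produce a co-orientation making the lift work, so a correct proof of the reconstruction step must at some point single out or construct the co-orientation rather than treat it as automatic. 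Your sketch does not engage with that subtlety.
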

In the above, the co-orientation $\mathfrak{o}$ is defined using the fact that the normal derivative of the anti-canonical section $s$ induces an isomorphism $d^{\nu}s\colon \rest{ND}{D[1]} \simeq \rest{K^*}{D[1]}$.
\begin{example}\label{ex:basicexample}
Consider the generalized complex structure on $\cc^2$ with canonical bundle generated by
\begin{align*}
	\rho = z_1z_2 + \tau dz_1 \wedge dz_2, \quad \tau \in \cc.
\end{align*}
The anticanonical section is given by $z_1z_2 \in \Gamma(\underline{\cc})$, and therefore $\rho$ defines a stable structure with elliptic ideal $\abs{z_1}^2\abs{z_2}^2$. The elliptic symplectic form induced by $\rho$ is
\begin{align*}
	\omega = \text{Im}(\tau)(d\log r_1 \wedge d\log r_2 - d\theta_1 \wedge d\theta_2) + \text{Re}(\tau)(d\log r_1 \wedge d\theta_2 + d\theta_1 \wedge  d\log r_2). 
\end{align*}
This structure provides the normal form for a four-dimensional stable generalized complex structure around a point in $D[2]$.
\end{example}
In this paper we will predominantly consider examples of stable generalized complex manifolds for which the local normal form has parameter $\tau = i \lambda$ for a real number $\lambda$, thus it is worth recalling the following definition:
\begin{definition}
Let $M^4$ be a four-dimensional manifold endowed with an elliptic divisor. We say $\omega \in \Omega^2(\elli)$ with zero elliptic residue has \textbf{imaginary parameter} at a point $p \in D[2]$ if
\begin{itemize}
\item $\abs{\Res_{r_1r_2}(\omega)(p)} = \abs{\Res_{\theta_1\theta_2}(\omega)(p)}$,
\item $\Res_{r_1\theta_2}\omega(p) = 0$,
\item $\Res_{r_2\theta_1}\omega(p) = 0$.
\end{itemize}
We say that $\omega$ has imaginary parameter if it has imaginary parameter at all points $p \in D[2]$.
\end{definition}
Recall that these residues are only well-defined up to sign, so that their absolute values are well-defined. Although elliptic symplectic forms with imaginary parameter seem very close to being induced by generalized complex structures, and in fact locally they are, due to possible orientation issues they might not globally correspond to a stable generalized complex structure. To see when this is the case, we need the following definition:
\begin{definition}
Let $M^4$ be an oriented manifold with a co-oriented elliptic divisor $(I_{\abs{D}},\mathfrak{o})$. Given $p \in D[2]$, let $(D_1,D_2)$ be two local embedded submanfolds for which $D = D_1 \cup D_2$ around $p$. The \textbf{intersection index} of $p$ is
\[
\varepsilon_p = \begin{cases}
 +1& \quad \mbox{ if the isomorphism } N_pD_1\oplus N_pD_2 \simeq T_pM\mbox{  is orientation-preserving};\\
-1& \quad\mbox{ otherwise}.
\end{cases}\]
The \textbf{parity} of a connected component $D'$ of $D$ is given by the product $\varepsilon_{D'} = \prod_{p \in D'[2]} \epsilon_p$.
\end{definition}
The parity of a connected component $D'$ of $D$ does not depend on the choice of co-orientation of $D$ and if $D'[2]$ has $n$ points, a change of orientation of $M$ changes the parity of $D'$ by $(-1)^n$. We extend the definition of \textbf{parity} to a smooth connected component $D'$ of $D$ by declaring its parity $\varepsilon_{D'}$ to be $+1$ if $D'$ is co-orientable, and $-1$ if it is not.

An elliptic symplectic form $\omega$ defines an orientation because $\omega^n$ is non-zero outside a codimension-two subset. Using this orientation, we can determine when an elliptic symplectic form induces a stable generalized complex structure:
\begin{corollary}[\cite{CKW20}]\label{cor:ellitocomplex}
Let $M^4$ be a manifold endowed with a co-orientable elliptic divisor $I_{\abs{D}}$, and let $\omega \in \Omega^2(\elli)$ be elliptic symplectic with zero elliptic residue and imaginary parameter. If the parity of all connected components of $D$ with respect to the orientation determined by $\omega$ is $1$, then there exists a co-orientation $\mathfrak{o}$ for $I_{\abs{D}}$ such that $(I_{\abs{D}},\mathfrak{o},\omega)$ induces an equivalence class of stable generalized complex structures.
\end{corollary}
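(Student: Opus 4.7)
The plan is to invoke Theorem~\ref{th:correspondence}: to produce a stable generalized complex structure it suffices to choose a co-orientation $\mathfrak{o}$ of $I_{\abs{D}}$ for which the residue relations
\[
\Res_q(\omega)=0, \qquad \Res_{r_i\theta_j}(\omega)=\Res_{r_j\theta_i}(\omega), \qquad \Res_{r_ir_j}(\omega)=-\Res_{\theta_i\theta_j}(\omega)
\]
hold at every point of $D[2]$. The first is one of the hypotheses, and the second is automatic from the imaginary-parameter assumption since both sides vanish. The task therefore reduces to choosing $\mathfrak{o}$ so that the sign identity $\Res_{r_1r_2}(\omega)(p)=-\Res_{\theta_1\theta_2}(\omega)(p)$ is realised at each double point.

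Next I would analyse how the relevant residues transform under a change of co-orientation of a single local branch through $p$. In the coordinates of Remark~\ref{rem:liealgebroidlocal}, reversing the co-orientation of one branch flips the sign of the corresponding $\partial_{\theta_i}$ but leaves $r_i\partial_{r_i}$ invariant. Hence $\Res_{r_1r_2}(\omega)(p)$ is unchanged while $\Res_{\theta_1\theta_2}(\omega)(p)$ changes sign, and combined with the imaginary-parameter identity $\abs{\Res_{r_1r_2}(\omega)(p)}=\abs{\Res_{\theta_1\theta_2}(\omega)(p)}$ this shows that at every double point the desired sign identity can be achieved by flipping exactly one of the two local branch co-orientations. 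The key technical ingredient is to match this local ``discrepancy sign'' with the intersection index $\varepsilon_p$ determined by $\omega$; by writing $\omega$ in the Darboux model of Example~\ref{ex:basicexample} and tracking the product orientation on $N_pD_1\oplus N_pD_2\cong T_pM$ induced by the two local co-orientations, one identifies the discrepancy sign with $\varepsilon_p$.

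With this local dictionary in hand, the global problem becomes one of finding a coherent system of co-orientations on the smooth pieces of $D$ realising the required sign at every double point. Encoding each smooth branch of a connected component $D'$ as a vertex, and each point of $D'[2]$ as an edge joining the two branches passing through it, the problem becomes a $\mathbb{Z}/2$-labelling problem whose obstruction is the product of the edge weights around each cycle of the resulting graph; by the identification above this obstruction equals the parity $\varepsilon_{D'}$. The co-orientability of $D$ handles smooth components (these carry trivial obstruction), and the hypothesis $\varepsilon_{D'}=1$ for every non-smooth component makes the cyclic obstruction vanish, so a global co-orientation $\mathfrak{o}$ realising the desired sign identity can be built by picking any initial co-orientation and successively flipping the co-orientations of an appropriate subset of branches.

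With such an $\mathfrak{o}$ in hand, all three residue relations of Theorem~\ref{th:correspondence} are satisfied, and the theorem produces the asserted equivalence class of stable generalized complex structures. The main obstacle I expect is the middle step: pinning down the identification between the local sign discrepancy $\Res_{r_1r_2}(\omega)(p)+\Res_{\theta_1\theta_2}(\omega)(p)$ and the intersection index $\varepsilon_p$, which requires a careful comparison between the orientation induced by the volume form $\omega^2$ near $p$ and the product of the co-orientations of the two branches of $D$ meeting at $p$.
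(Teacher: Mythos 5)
The paper does not give its own proof of Corollary~\ref{cor:ellitocomplex}; it is cited from \cite{CKW20}. I therefore evaluate your proposal on its own terms rather than against an in-paper argument.

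The first part of your plan is sound. By Theorem~\ref{th:correspondence} one must verify three residue identities after a choice of co-orientation. Zero elliptic residue handles the first; imaginary parameter forces $\Res_{r_1\theta_2}\omega = \Res_{r_2\theta_1}\omega = 0$, hence both sides of the mixed identity vanish, so the second is automatic; the only content is the sign of $\Res_{r_1r_2}\cdot\Res_{\theta_1\theta_2}$ at each double point. Your observation that flipping the co-orientation of a single local branch fixes $r_i\partial_{r_i}$ and flips $\partial_{\theta_i}$, hence fixes $\Res_{r_1r_2}$ and flips $\Res_{\theta_1\theta_2}$, is correct, and so is the identification of the local discrepancy with $\varepsilon_p$: both flip under a single branch flip and both equal $+1$ for the model form $\mathrm{Im}(i\lambda\, d\log z_1 \wedge d\log z_2)$ with the standard complex co-orientation, so they agree for every co-orientation once $M$ is oriented by $\omega^2$.

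The weak link is the final cohomological step. You assert that the obstruction to the $\mathbb{Z}/2$-labelling problem on the graph $\Gamma$ (vertices the branches, edges the double points) is detected by "the product of the edge weights around each cycle" and that this obstruction "equals the parity $\varepsilon_{D'}$." But the obstruction is a class in $H^1(\Gamma;\mathbb{Z}/2)$, whose rank equals the cycle rank of $\Gamma$, whereas the parity is a single $\mathbb{Z}/2$-number --- the product over \emph{all} of $D'[2]$. These coincide only when the cycle rank of $\Gamma$ is exactly one and its unique generating cycle traverses every edge (as in the "necklace of spheres" picture that occurs for boundary Lefschetz fibrations). In general nothing in your argument rules out higher cycle rank: for instance a single immersed sphere with two self-crossings gives a graph with one vertex and two loops (cycle rank two), for which the co-orientation is rigid up to a global flip that fixes every $\varepsilon_p$; parity $=+1$ would then give $\varepsilon_{p_1}=\varepsilon_{p_2}$ but not $\varepsilon_{p_1}=\varepsilon_{p_2}=+1$. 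To close the gap you would need to argue either that the hypotheses force the relevant graph to have cycle rank $\le 1$ with all edges on the generating cycle, or that the elliptic symplectic form with imaginary parameter imposes additional constraints on the individual $\varepsilon_p$; as written, the reduction from "all cycle obstructions vanish" to "parity $=+1$" is unjustified.
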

This corollary gives us the following strategy: first construct elliptic symplectic structures with imaginary parameter, and then compute the parity of the connected components of the divisor. This is more convenient than using Theorem~\ref{th:correspondence} directly, as it separates the construction of the symplectic structure from the existence of a particular co-orientation of the divisor.
%
\section{Boundary maps and Lefschetz fibrations}\label{sec:boundarymaps}
The game we play next is to single out a class of maps that admits enough singularities to make them interesting, while also giving us enough control on the singular behaviour so that we can use these maps to perform geometric constructions. The main point of this section is to extend the notion of boundary Lefschetz fibration defined in \cite{CK18} for manifolds with smooth divisors to manifolds with self-crossing divisors. This extension allows for maps to have one extra type of singularity: elliptic--elliptic type. This change allows us to get a much better grasp on many generalized complex manifolds as those can be easily described as fibrations with elliptic--elliptic singularities.
\subsection{Boundary maps}
Our first step is to single out a very general class of maps which is compatible with the Lie algebroids introduced in Section~\ref{sec:liealg}. These are the \emph{boundary maps} which already illustrate how singular behaviour of maps can be coupled with Lie algebroids.

We start with some basic terminology. A \textbf{pair}, $(M,D)$,  is a manifold $M$ together with a (possibly) immersed submanifold $D \subseteq M$. A \textbf{map of pairs} $f\colon (M,D) \to (N,Z)$ is a map $f\colon M \to N$ such that $f(D) \subseteq Z$. A \textbf{strong map of pairs} furthermore satisfies $f^{-1}(Z) = D$. Finally, $(N,Z)$ is a \textbf{log pair} if the vanishing ideal $I_Z$ is a log divisor ideal on $N$.
\begin{definition}
Let $f\colon (M,D) \rightarrow (N,Z)$ be a strong map of pairs onto a real log pair. Then $f$ is a \textbf{boundary map} if $I_{\abs{D}}:= f^*I_Z$ defines an elliptic divisor ideal.
\end{definition}
\begin{example}\label{ex:basic}
The basic example to have in mind for boundary map is
\[f_1\colon (\cc^2,D) \to (\rr^2,Z), \qquad f_1(z_1,z_2) = (|z_1|^2,|z_2|^2),\]
where $D\subset \cc^2$ and $Z \subset \rr^2$ are the two coordinate axes.

There are other examples of boundary maps that we will eventually exclude by imposing further requirements, but which are also interesting to keep in mind for now:
\[f_2 \colon (\cc^2,D) \to (\rr,\{0\}), \qquad f_2(z_1,z_2) = |z_1|^2|z_2|^2,\]
where $D\subset \cc^2$ is again the two coordinate axes and 
\[f_3 \colon (S^2,\{p_N,p_S\}) \to (S^1,\{-1\}), \qquad f_3(x,y,z) = \exp(\pi i z),\]
where $p_N,p_S$ are the north and south poles of the unit sphere and we regard $S^1$ as the complex numbers of length $1$.
\end{example}
Notice that in the first two examples above, the image of the maps considered are manifolds with corners, and for all intents and purposes we could have considered them as maps into their image with the divisor being determined by the boundary. This is in line with the original idea behind log geometry (also known as $b$-geometry) developed by Mazzeo and Melrose \cite{MR1348401}. The third map shows that sometimes the image may be a genuine manifold (without boundary). Note that $f_3$ factors through the height map $\tilde{f_3} \colon (S^2, \set{p_N,p_S}) \rightarrow (I,\partial I)$, $\tilde{f_3}(x,y,z) = z$,
\begin{center}
\begin{tikzcd}
	&(I,\partial I) \ar[d,"\exp{(\pi i \,\cdot)}"] & \\
	(S^2,\{p_N,p_S\}) \ar[r,"f_3"] \ar[ru,"\tilde{f_3}"] & (S^1,\{-1\}),
\end{tikzcd}
\end{center}
which has image a manifold with boundary, and boundary as divisor. This is a specific example of a more general construction, namely that we can ``cut $N$ open along $Z$''. Next we will describe this procedure, which justifies the name {\it boundary map}.
\begin{lemma}\label{lem:boundaryfication}
Let $(N,Z)$ be a real log pair with $N$ a manifold without boundary. Then there is a manifold with corners $\tilde{N}$ and a map $p\colon (\tilde N,\partial \tilde N) \to (N,Z)$ such that
\begin{itemize}
\item $p$ is a map of divisors,
\item $p\colon  \tilde N\backslash\partial \tilde N \to N\backslash Z$ is a bijection,
\item every point $x \in \tilde{N}$ has a neighbourhood $U$ such that $p\colon U \to p(U)$ is a diffeomorphism. 
\end{itemize}
Further, if $p'\colon (N',\partial N') \to (N,Z)$ is another manifold and map satisfying the properties above then there is a unique diffeomorphism $\Psi\colon (N',\partial N') \to (\tilde N,\partial \tilde N)$ for which $p' = p \circ \Psi$.
\end{lemma}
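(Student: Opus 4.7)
The plan is to construct $\tilde{N}$ by a real-oriented blow-up along $Z$ and then use the explicit local description to establish uniqueness. First, I would work locally. By Remark~\ref{rem:exampledivisors}, around each point $p \in N$ of intersection number $j$ there exist coordinates $(x_1,\ldots,x_j,y_1,\ldots,y_m)$ on an open $U \subseteq N$ in which $I_Z = \langle x_1 \cdots x_j\rangle$. On such a chart I would set
\[
\tilde{U} := \bigsqcup_{\varepsilon \in \{-1,+1\}^j} \{(x,y) \in U : \varepsilon_i x_i \geq 0 \text{ for all } i\},
\]
together with the obvious map $p_U\colon \tilde U \to U$ given by inclusion on each orthant. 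Each component of $\tilde U$ is a manifold with corners diffeomorphic to an open subset of $[0,\infty)^j \times \mathbb{R}^m$, $p_U$ maps $\partial\tilde U$ to $Z\cap U$, restricts to a bijection on complements (matching the $2^j$ sign-components of $(U\setminus Z)$ with the $2^j$ orthants), and is tautologically a local diffeomorphism. This yields a local model realising all three properties in the statement.

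Next, I would glue these local pieces. On the overlap of two adapted charts $(U_\alpha,\tilde U_\alpha)$ and $(U_\beta,\tilde U_\beta)$, the transition map on $N$ preserves the ideal $I_Z$ and therefore, as noted after Definition~\ref{def:xlog}, permutes the local log factors; in particular it permutes the $2^j$ connected components of $(U_\alpha\cap U_\beta)\setminus Z$. This permutation lifts uniquely to a diffeomorphism between the corresponding unions of orthants in $\tilde U_\alpha$ and $\tilde U_\beta$, because each closed orthant is the topological closure (inside the corresponding blow-up) of its interior component. Smoothness of this lift up to the boundary follows from the fact that the transition diffeomorphism already extends smoothly across $Z$ in the ambient space $N$ and respects the defining sign conditions. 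The resulting cocycle on triple overlaps is automatic, so the $\tilde U_\alpha$ assemble to a manifold with corners $\tilde N$ carrying a global smooth map $p\colon\tilde N \to N$, and the three required properties are all local and have already been verified.

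For uniqueness, the key observation is that for every $x\in Z[j]$ the fibre $p^{-1}(x)$ is canonically in bijection with $\pi_0(V\setminus Z)$ for every sufficiently small connected neighbourhood $V$ of $x$: a point of $p^{-1}(x)$ has a fundamental system of neighbourhoods in $\tilde N$ mapping diffeomorphically onto open subsets of $V$ whose intersection with $N\setminus Z$ is connected, and distinct preimages hit distinct components. The same local description applies to any $(N',p')$ satisfying the three listed properties. Thus I would define $\Psi\colon N' \to \tilde N$ by sending $x'$ to the point of $p^{-1}(p'(x'))$ corresponding, under the above bijection, to the connected component of $V\setminus Z$ that a small neighbourhood of $x'$ maps into. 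Smoothness and bijectivity of $\Psi$ follow from the local-diffeomorphism property of $p$ and $p'$, and $p' = p\circ\Psi$ by construction. Uniqueness of $\Psi$ is then forced: any other such diffeomorphism must agree with $\Psi$ on the dense open set $N'\setminus\partial N'$, where both $p$ and $p'$ are bijections onto $N\setminus Z$, and hence everywhere.

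The main obstacle I anticipate is verifying that the transition maps between local models extend smoothly as maps of manifolds with corners, rather than merely as continuous bijections. Concretely, one must check that the sign-permutation identifying orthants in $\tilde U_\alpha$ and $\tilde U_\beta$ is realised by a genuine smooth map up to and including the corner strata. This is a standard but slightly delicate point that relies on the normal form provided by Remark~\ref{rem:exampledivisors} being compatible with arbitrary isomorphisms of the log divisor; once this is handled, the rest of the argument is essentially bookkeeping.
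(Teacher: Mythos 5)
Your proposal is correct and takes essentially the same approach as the paper: both construct $\tilde N$ from the disjoint union of closed orthants in adapted charts, glue via the induced lifts of the transition maps (which form a cocycle because the ambient transitions do), and prove uniqueness from the local-diffeomorphism property by first identifying $N'\setminus\partial N'$ with $\tilde N\setminus\partial\tilde N$ and then extending. The ``obstacle'' you flag at the end is actually a non-issue --- the lift of a transition map is literally the restriction of the ambient smooth diffeomorphism of $N$ to each closed orthant, with inverse the restriction of the inverse, so smoothness up to the corner strata is immediate.
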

\begin{proof}
We start with a local construction. Denoting by $\rr^n_\ell$ the manifold $\rr^n$ with divisor given by the hyperplanes determined by the equation $x_1 \cdots x_\ell =0$, we let $\tilde{\rr}^n_\ell$ be given by
\[\tilde{\rr}^n_\ell = \bigcup^\bullet_{K \in \{-1,1\}^\ell}\{(x_1,\dots,x_n) \in \rr^n\colon k_i x_i\geq 0, \mbox{ where } K = (k_1,\dots k_\ell)\},\]
and we let $p\colon \tilde{\rr}^n_\ell \to \rr^n_\ell$ be the natural inclusion: $p(x) = x$. Figure~\ref{fig:boundarification} shows this construction for $\rr^2$ with the two coordinate axes as its real log divisor.
\begin{figure}
\begin{overpic}[unit=1mm,scale=0.2]{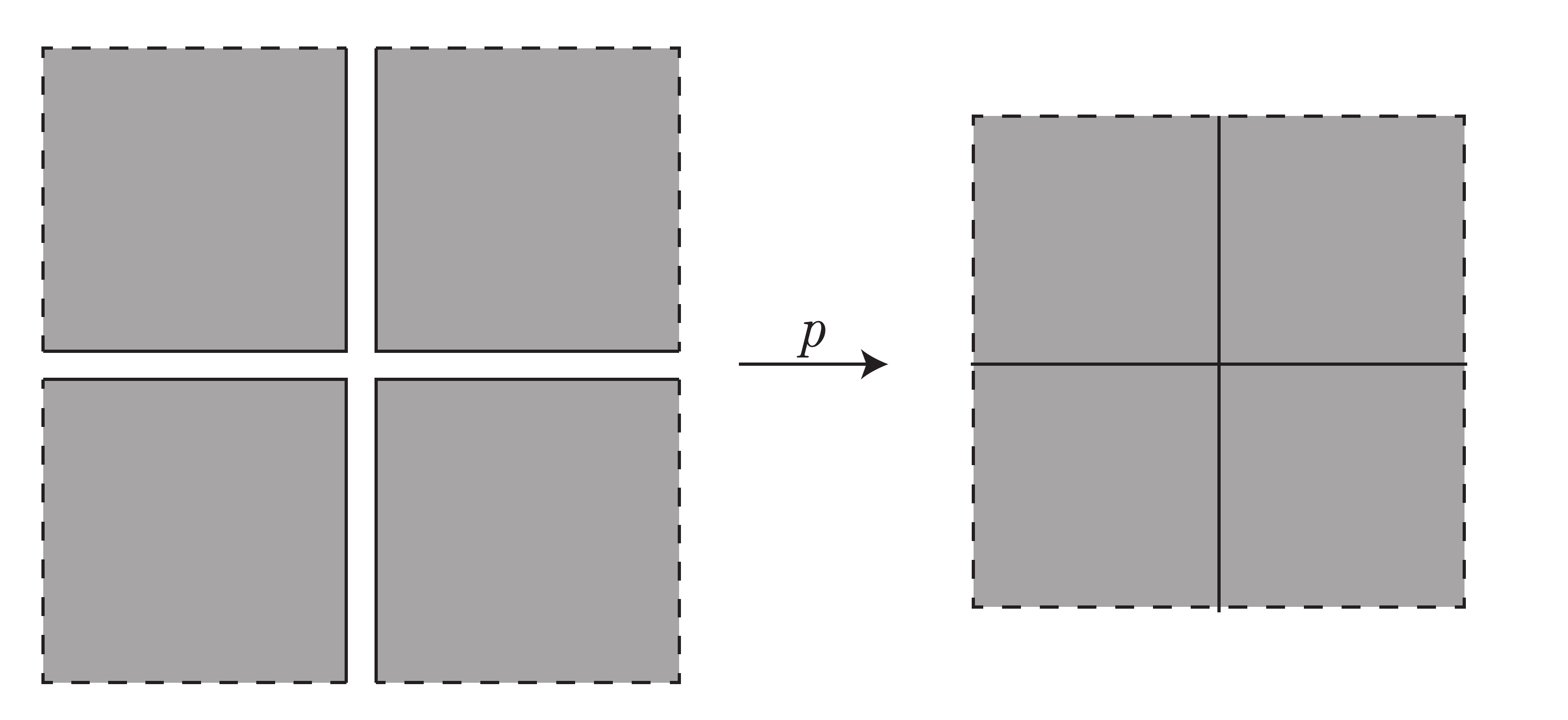}
	\put(14,-5){$\tilde{\rr}^2_2$}
	\put(52,-5){$\rr^2_2$}
\end{overpic}
\caption{Boundarification of $\rr^2$ with two coordinate axes.}\label{fig:boundarification}
\end{figure}
We call each connected component of $\tilde{\rr}^n_\ell$ defined above a {\it quadrant}.

Notice that $p\colon \tilde{\rr}^n_\ell \to \rr^n_\ell$ is a map of divisors, and if a smooth map $f\colon M \to \rr^n$ has its image in a quadrant, then it admits a smooth lift to $\tilde{\rr}^n_\ell$. Further, if $M$ is connected and the image of $f$ has points which are not in the hyperplanes determined by $x_1 \cdots x_\ell = 0$, then this lift is unique.

For the global construction, we observe that charts in $N$ provide a way to glue the local construction above to produce a manifold with corners. Indeed, given two charts that render the divisor in standard form, in their overlap the change of coordinates gives a diffeomorphism $\Phi \colon \rr^n_\ell \to \rr^n_\ell$, for some $\ell$. Since the charts are adapted to the divisor, $\Phi$ also induces a diffeomorphism of quadrants, that is, it lifts to a diffeomorphism $\tilde\Phi \colon (\tilde{\rr}^n_\ell,\partial \tilde{\rr}^{n}_\ell) \to (\tilde{\rr}^n_\ell,\partial \tilde{\rr}^{n}_\ell)$.

Since the changes of coordinates arising from an atlas for $N$ give rise to a \v{C}ech cocycle of diffeomorphisms, the same holds for their lifts, so the procedure can be used to produce a manifold with corners $\tilde{N}$. Further, the natural local maps ,``$p$'', defined in coordinate charts above patch together to give a global map of divisors $p\colon (\tilde N, \partial \tilde N) \to (N,Z)$. By construction, $p\colon \tilde N\backslash\partial \tilde N \to N \backslash Z$ is a bijection away from the divisors and a local diffeomorphism onto its local image.

Finally, if $p'\colon (N',\partial N') \to (N,Z)$ is a map of divisors with the two properties above then we show that $p'$ has a unique lift $\Psi\colon (N',\partial N') \to (\tilde N,\partial \tilde N) $:
\begin{center}
\begin{tikzcd}
&(\tilde{N},\partial \tilde N) \ar[d,"p"] & \\
(N',\partial N') \ar[r,"p'"] \ar[ru,"\Psi"] & (N,Z).
\end{tikzcd}
\end{center}
Indeed, in this case, $p^{-1} \circ p' \colon N'\backslash \partial N' \to \tilde N \backslash \partial \tilde N$ is a diffeomorphism and by the third property any point $x \in \partial N'$ has a connected neighbourhood $U \subset N'$ that maps diffeomorphically onto its image. Hence, taking $U$ small enough, since $U$ is connected, $p'(U)$ lies in a quadrant for a coordinate chart in $N$ and hence $p'$ has a unique (local) lift to $\tilde{N}$. Patching these local lifts together we obtain the map $\Psi$. Since $\Psi$ is a diffeomorphism in the interior of $N'$ and by construction also a local diffeomorphism for points in the boundary of $N'$ it is a global diffeomorphism.
\end{proof}
\begin{definition}
The {\bf boundarification} of a manifold without boundary together with a real divisor, $(N,Z)$, is a manifold with corners $(\tilde{N},\partial \tilde N)$ together with a map $p\colon (\tilde N,\partial \tilde N) \to (N,Z)$ satisfying the properties of Lemma~\ref{lem:boundaryfication}.
\end{definition}
\begin{example}
If we take $N$ to be the two-dimensional torus and $Z$ to be an embedded circle which represents a primitive homology class, the boundarification of $N$ is a cylinder and the map $p$ identifies the two ends of the cylinder. If we take $Z$ to be a pair of embedded circles intersecting transversely and which represent a basis for the homology of the torus, then the boundarification is a rectangle and the quotient map identifies opposite sides in the usual fashion.
\end{example}
\begin{proposition}\label{prop:boundarification of maps}
Let $f\colon (M,D)\to (N,Z)$ be a boundary map onto a manifold without boundary equipped with a real log divisor. Then there exists a unique boundary map $\tilde{f} \colon (M,D)\to (\tilde N,\partial \tilde N)$ to its boundarification that is a lift of $f$, i.e.\ which satisfies $f = p \circ \tilde{f}$ for $p \colon (\tilde N, \partial\tilde N) \to (N,Z)$.
\end{proposition}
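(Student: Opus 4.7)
The plan is to construct $\tilde{f}$ by patching unique local lifts of $f$, exploiting that elliptic divisors have codimension two while real log divisors have codimension one.

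Away from $D$, the map $p\colon \tilde{N}\setminus\partial\tilde{N} \to N\setminus Z$ is a bijection and a local diffeomorphism, so on any sufficiently small neighborhood of a point in $M\setminus D$ we must set $\tilde{f} := p^{-1}\circ f$; this is both necessary and well defined. The substantial case is the lift near a point $x \in D$. Choose coordinates $(t_1,\ldots,t_\ell, s)$ on $N$ centered at $f(x)$ in which $I_Z = \langle t_1 \cdots t_\ell\rangle$, and a connected neighborhood $U$ of $x$ whose image lies in this chart. Because $f$ is a strong map of pairs, each $t_i\circ f$ is nowhere zero on $U\setminus D$. By the local normal form in Remark~\ref{rem:exampledivisors}, $D$ has codimension two in $M$, so after shrinking $U$ the complement $U\setminus D$ is connected. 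Hence each $t_i\circ f$ has constant sign on $U\setminus D$, and since $D$ is nowhere dense, $f(U)$ lies in a single closed quadrant of $\rr^n_\ell$. As observed in the proof of Lemma~\ref{lem:boundaryfication}, $f|_U$ then admits a unique smooth lift into the corresponding quadrant of $\tilde{N}$.

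These local lifts glue: on the overlap of two such neighborhoods, both lifts must agree with $p^{-1}\circ f$ on the dense open subset complementary to $D$, hence coincide everywhere by continuity. This yields a globally defined smooth map $\tilde{f}\colon M\to\tilde{N}$ with $p\circ\tilde{f} = f$. That $\tilde{f}$ is a boundary map is then automatic: $\tilde{f}^{-1}(\partial \tilde{N}) = f^{-1}(Z) = D$, and
\begin{equation*}
\tilde{f}^{*} I_{\partial\tilde{N}} = \tilde{f}^{*} p^{*} I_Z = f^{*} I_Z = I_{\abs{D}},
\end{equation*}
which is an elliptic divisor ideal by assumption. Global uniqueness of $\tilde{f}$ follows from the same dense-complement argument.

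The key step, and the only one that is not essentially formal, is the connectedness of $U\setminus D$ at a point $x \in D$, which forces $f(U)$ into a single quadrant and makes the local lift exist. This is precisely where the elliptic, rather than merely log, nature of $I_{\abs{D}}$ becomes essential: for a log divisor the vanishing locus would be codimension one, $U\setminus D$ would typically disconnect, and different components could a priori map into different quadrants, obstructing the lift.
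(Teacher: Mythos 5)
Your proof is correct and follows essentially the same approach as the paper's: obtain local lifts by noting that $D$ has codimension two, so $U\setminus D$ is connected for small $U$ around any $x$, hence $f(U)$ lands in a single quadrant and lifts uniquely; then patch by uniqueness on overlaps. You additionally spell out that $\tilde{f}$ is itself a boundary map by computing $\tilde{f}^*I_{\partial\tilde{N}} = f^*I_Z = I_{|D|}$, a verification the paper leaves implicit, and your closing remark correctly isolates the codimension-two property of the elliptic divisor as the crux.
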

\begin{proof}
All we need to prove is that every point $x\in M$ has a neighbourhood $U$ such that $f|_U\colon U \to N$ admits a unique lift $\tilde{f}|_U \colon U \to \tilde{N}$. Indeed, if this is the case, then any two such local lifts will agree in their overlap by uniqueness and hence the local lifts patch together to give a unique global map.

Because $D$ has codimension two in $M$ every point $x \in M$ has a neighbourhood $U$ such that $U \backslash D$ is connected, it follows that $f(U \backslash D)$ lies in a connected component of $N\backslash Z$. By taking $U$ small enough, we have that $f(U\backslash D)$ lies in a connected component of the complement of $Z$ in a coordinate patch $V \subset N$, that is, $f(U \backslash D)$ lies in a quadrant and, by continuity, so does $f(U)$. As such there is a unique lift to a map $\tilde f\colon U \to \tilde{N}$.
\end{proof}
We intend to use boundary maps to construct geometric structures on their total space. Thus we can, without loss of generality, assume that the target of a boundary map is $(N,\partial N)$, a manifold with corners whose real log divisor is determined by its boundary. This also explains the terminology ``boundary map''.
\subsection{Boundary Lefschetz fibrations}
The notion of a boundary map $f$ is still too general to give us enough information about the singularities of the map. To get a good grasp on $f$ we need to ensure that its singularities are well controlled and this is what we do next. There are two ways to constrain the singularities of $f$: we can either impose that they display a specific behaviour with respect to the ideals (and Lie algebroids) present, or we can impose that singularities disjoint from the vanishing loci of those ideals acquire a specific normal form. We will follow both routes here.

Note that a boundary map is by definition a map of pairs, so that it satisfies $f(D) \subseteq Z$. The first restriction we impose is that the map moreover respects the stratifications present on both $D$ and $Z$.
\begin{definition}
A \textbf{fibrating boundary map} is a boundary map $f\colon (M,D) \to (N,Z)$ such that for each $k \geq 1$ we have that:
\begin{itemize}
	\item $f\colon (M,D[k]) \to (N,Z[k])$ is a strong map of pairs;
	\item each restriction $\rest{f}{D[k]}\colon D[k] \rightarrow Z[k]$ is a submersion.\qedhere
\end{itemize}
\end{definition}
In Example~\ref{ex:basic}, $f_1$ and $f_3$ are fibrating boundary maps, while $f_2$ is not as it does not satisfy the first condition.

For a fibrating boundary map, $f$, we can use the ideals on $M$ and $N$ to control the singular behaviour of $f$ in a neighbourhood of their corresponding divisors. Concretely, we have a pointwise normal form for the map.
\begin{lemma}\label{lem:boundmaplocform}
Let $f\colon (M^n,D^{n-2}) \rightarrow (N^m,Z^{m-1})$ be a fibrating boundary map and let $x \in D[k]$. Then there exist coordinates $(x_1,\ldots,x_n)$ around $x$ and $(z_1,\ldots,z_k,y_i)$ around $f(x)$ such that
\begin{itemize}
	\item $Z$ is the standard log divisor with intersection number $k$ on $\rr^k \times \rr^{m-k}$
	\item $D$ is the standard elliptic divisor with intersection number $k$ on $\rr^{2k}\times \rr^{n-2k}$, and
	\item in these coordinates, the map $f$ takes the form
	\begin{align*}
		f(x_1,\ldots,x_n) = (x_1^2+x_2^2,\ldots,x_{2k-1}^2+x_{2k}^2,x_{n-m+k},\ldots,x_n).
	\end{align*}
\end{itemize}
Conversely, if for every point in $D$ the map $f$ is given in standard coordinates for the divisors by the expression above, then it is a fibrating boundary map.
\end{lemma}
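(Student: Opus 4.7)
My strategy is to build the coordinates in three stages: first normalise the target, then normalise $f$ in the directions transverse to the deepest stratum by a parametric Morse argument, and finally use the submersion property to extend along $D[k]$.

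I start by applying the standard local normal form for a self-crossing real log divisor around $f(x)\in Z[k]$ to obtain coordinates $(z_1,\ldots,z_k,t_1,\ldots,t_{m-k})$ in which $I_Z = \inp{z_1\cdots z_k}$ and $Z[k]=\set{z_1=\cdots=z_k=0}$. Because $f$ is a strong map of pairs $(M,D[j])\to (N,Z[j])$ for every $j\geq 1$, near $x$ the divisor $D$ decomposes as the transverse union of $k$ smooth branches $D_1,\ldots,D_k$ with $D[k]=D_1\cap\cdots\cap D_k$, and after permuting the $z_i$'s one may assume $f(D_i)\subset\set{z_i=0}$. Consequently each pullback $f^*z_i$ vanishes only along $D_i$ near $x$, and the ideal identity $f^*I_Z = I_{\abs{D}}$ combined with the factorisation of $I_{\abs{D}}$ into smooth elliptic ideals forces $f^*z_i$, after possibly negating $z_i$, to be a smooth elliptic generator for $D_i$: it vanishes to second order along $D_i$ with positive-definite normal Hessian.

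The heart of the argument is to produce transverse coordinate pairs $(a_i,b_i)_{i=1}^{k}$ to $D[k]$ which simultaneously realise $f^*z_i = a_i^2+b_i^2$. I would do this by a parametric Morse lemma with parameter space $D[k]$, applied one branch at a time: since the $D_i$ are pairwise transverse at each point of $D[k]$, their conormals span a $2k$-plane in $T^*M$ along $D[k]$ and one can split the normal bundle as $ND[k] \cong \bigoplus_{i=1}^{k} ND_i$ smoothly along $D[k]$. Applying the parametric Morse lemma to $f^*z_i$ on the $ND_i$-slice yields coordinates $(a_i,b_i)$ in which $f^*z_i = a_i^2+b_i^2$, and the transversality of the branches ensures that the Morse reparametrisation for index $i$ acts only within the $ND_i$-factor, hence does not disturb the form $a_j^2+b_j^2$ already obtained for $j\neq i$. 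This is the step I expect to be the main obstacle, since one must check that the successive Morse changes of variable can be carried out smoothly in the parameter $D[k]$ and assembled consistently across the $k$ branches; however, the transverse intersection hypothesis is precisely what makes this bookkeeping work, and the definiteness of each normal Hessian comes for free from the smooth elliptic divisor structure identified in the previous paragraph.

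To complete the argument, the submersion $\rest{f}{D[k]}\colon D[k]\to Z[k]$ lets me pull back the coordinates $t_1,\ldots,t_{m-k}$ on $Z[k]$ to functionally independent smooth functions on $D[k]$, and I extend them by fibre coordinates to a full coordinate system $(c_1,\ldots,c_{n-2k})$ on $D[k]$. Propagating these constantly along the transverse slices from the previous paragraph produces the desired coordinate system on a neighbourhood of $x$ in $M$, in which the first $k$ components of $f$ are $a_i^2+b_i^2$ and the remaining components are the $c_j$'s corresponding to pulled-back $t_j$'s; relabelling the $(a_i,b_i,c_j)$ as $(x_1,\ldots,x_n)$ in the order specified by the lemma yields the stated normal form. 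The converse is a direct computation: the displayed formula pulls back $\inp{z_1\cdots z_k}$ to $\inp{(x_1^2+x_2^2)\cdots(x_{2k-1}^2+x_{2k}^2)}$, which is the standard elliptic ideal of Remark~\ref{rem:exampledivisors}, and inspection of the formula shows that its restriction to each stratum $D[j]$ is a submersion onto $Z[j]$, so $f$ is a fibrating boundary map.
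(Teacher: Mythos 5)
Your approach is essentially the same as the paper's: normalise the target, show each $f^*z_i$ is a smooth elliptic generator for the branch $D_i$, simultaneously normalise the $k$ factors by a Morse-lemma argument, then complete the coordinate system using the submersion condition. You actually supply more detail than the paper does on the simultaneous Morse normalisation, which the paper states in a single sentence (``Using that $f$ is fibrating, \dots let $(x_1,\ldots,x_n)$ be coordinates on $U$ in which this is the standard elliptic divisor, and such that $f^*(z_j) = x_{2j-1}^2 + x_{2j}^2$''); your iterated parametric Morse argument is the right way to justify this, and the observation that the coordinate change for index $i$ only modifies the $i$-th normal pair, hence preserves the identities $f^*z_j = a_j^2 + b_j^2$ already achieved for $j<i$, is the key bookkeeping point.

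There is, however, a genuine gap in your final step. You pull back the base coordinates $t_j$ to $D[k]$, extend to a coordinate system $(c_1,\ldots,c_{n-2k})$ on $D[k]$, and then propagate these \emph{constantly} along the transverse slices of the Morse tubular neighbourhood. But there is no reason that the resulting coordinates agree with $f^*t_j$ off $D[k]$: you only have $c_j = f^*t_j$ on $D[k]$ itself, while $f^*t_j$ will generally depend on the normal variables $(a_i,b_i)$ as well, so the $(k{+}j)$-th component of $f$ need not be $c_j$ in your chart. The paper avoids this by using the functions $f^*\mathrm{pr}^*_{Z[k]}y_j$ (i.e., the honest pullbacks) directly as coordinates on $M$: these are functionally independent of $a_1,b_1,\ldots,a_k,b_k$ near $x$ because their restrictions to $D[k]$ are functionally independent (the submersion hypothesis) while the $a_i,b_i$ span the conormal of $D[k]$, so the whole collection may be completed to a coordinate system in which $f$ has the stated form. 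Replacing your propagated $c_j$'s by the pullbacks $f^*t_j$ in the last paragraph closes the gap; the rest of the argument stands.
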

\begin{proof}
Choose a tubular neighbourhood $\mathcal{V}$ of $Z[k]$ and denote by $\text{pr}_{Z[k]}\colon NZ[k] \rightarrow Z[k]$ the projection. Let $V' \subset \mathcal{V}$ be an open neighbourhood of $f(x)$ on which $Z[k]$ is the standard log divisor and write $V'\cap Z[k] = \set{z_1\cdot \ldots \cdot z_k = 0}$. Choose a coordinate system $(y_{k+1},\ldots,y_m)$ on $V' \cap Z[k]$, so that the set $\set{z_1,\ldots,z_k,\text{pr}^*_{Z[k]} y_{k+1},\ldots,\text{pr}^*_{Z[k]}y_m}$ forms a coordinate system on $V'$ which is possible because $\text{pr}_{Z[k]}$ is a submersion. Because $f$ is a morphism of divisors, $f^*(z_1 \cdot \ldots \cdot z_k)$ generates an elliptic divisor ideal on $U := f^{-1}(V') \subseteq M$. Using that $f$ is fibrating, after possibly shrinking $U$ around $x$, let $(x_1,\ldots,x_n)$ be coordinates on $U$ in which this is the standard elliptic divisor, and such that $f^*(z_j) = x_{2j-1}^2+x_{2j}^2$. Because the restriction $\rest{f}{Z[k]}$ is a submersion we see that
\begin{align*}
	\set{x_1,\ldots,x_{2k},f^*\text{pr}^*_{Z[k]}y_{k+1},\ldots,f^*\text{pr}^*_{Z[k]}y_m}
\end{align*}
forms a functionally independent set. We can complete this to a coordinate system on $M$ and relabel these as $(x_1,\dots,x_n)$. If we use the coordinate system $(z_1,\ldots,z_k,\text{pr}^*_{Z[k]}y_{k+1},\ldots,\text{pr}^*_{Z[k]}y_m)$ on $N$ and the above coordinates on $M$, then $f$ takes the required form.

The converse follows immediately from the local expression for $f$.
\end{proof}
\begin{remark}\label{rem:there will be order}
Even if $M$ and $N$ are oriented manifolds and we require the use of coordinate charts compatible with orientations, we can still arrange that the local expression for $f$ is given by the expression in Lemma~\ref{lem:boundmaplocform}. Indeed, using complex conjugation on the domain and permutation of the coordinates on both domain and codomain we can change a coordinate chart which is not compatible with the given orientations into one that is.

In four dimensions, if $D[2]$ is nonempty, Lemma~\ref{lem:boundmaplocform} implies that $N$ is two-dimensional. Moreover, in a neighbourhood of a point $p\in D[2]$, orientations of $M$ and $N$ in fact dictate which one is ``the first" strand of $D$ arriving at $p$ and which one is ``the second", as this information is determined by the orientation of $N$.
\end{remark}
\begin{lemma}\label{lem:confibresoverD}
Let $f\colon (M,D) \rightarrow (N,Z=\partial N)$ be a fibrating boundary map whose fibres near $D$ are connected. Then the fibres of $\rest{f}{D[k]}\colon D[k] \rightarrow Z[k]$ are connected for all $k\geq 1$.
\end{lemma}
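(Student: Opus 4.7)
The plan is to extend the ``connected fibres'' property from the regular part of $f$ to the stratum $D[k]$ via a limiting argument, using the local normal form of Lemma~\ref{lem:boundmaplocform} to compare the fibre over $q \in Z[k]$ with regular fibres that limit onto it.

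First I would observe that because $f\colon (M,D[k]) \to (N,Z[k])$ is a strong map of pairs, we have $f^{-1}(q) \subseteq D[k]$ for every $q \in Z[k]$; hence $\rest{f}{D[k]}^{-1}(q) = f^{-1}(q)$ and it is enough to prove connectedness of the full fibre. I would then argue by contradiction: assume $f^{-1}(q) = C_1 \sqcup C_2$ with $C_1, C_2$ non-empty and closed. Using paracompactness of $M$ together with local properness of $f$ around $f^{-1}(q)$ (automatic when $M$ is compact, which covers all applications in this paper), choose disjoint open neighbourhoods $U_1 \supseteq C_1$ and $U_2 \supseteq C_2$, as well as an open neighbourhood $V$ of $q$ in $N$, such that $f^{-1}(V) \subseteq U_1 \cup U_2$.

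Next I would apply Lemma~\ref{lem:boundmaplocform} at each $p \in f^{-1}(q)$: in adapted coordinates $f$ takes the form $(x_1,\ldots,x_n) \mapsto (x_1^2+x_2^2, \ldots, x_{2k-1}^2+x_{2k}^2, x_{n-m+k}, \ldots, x_n)$, so for a nearby $q' \in V \setminus Z$ (all $z_i$-coordinates slightly positive) the local picture of $f^{-1}(q')$ is a $T^k$-bundle over a slice in the remaining coordinates which collapses continuously onto the local slice of $f^{-1}(q)$ as $q' \to q$. Consequently, for $q'$ sufficiently close to $q$ the fibre $f^{-1}(q')$ meets every prescribed neighbourhood of every point of $f^{-1}(q)$, and in particular it intersects both $U_1$ and $U_2$. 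Being contained in the disjoint union $U_1 \sqcup U_2$, the fibre $f^{-1}(q')$ is then disconnected; but $q'$ can be taken arbitrarily close to $q \in Z = \partial N$, so $f^{-1}(q')$ lies in a neighbourhood of $D$, contradicting the hypothesis that fibres near $D$ are connected.

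The hardest step is the enclosure $f^{-1}(V) \subseteq U_1 \cup U_2$ for some neighbourhood $V$ of $q$: it relies on local properness of $f$ around $f^{-1}(q)$. In the compact setting this is immediate, since $f$ is then a closed map and $f(M \setminus (U_1 \cup U_2))$ is a closed set missing $q$; in greater generality a mild properness assumption must either be imposed or read into the phrase ``fibres near $D$''. Once the enclosure is in hand, the limiting step itself is entirely routine from the local model of Lemma~\ref{lem:boundmaplocform}, and (by Remark~\ref{rem:there will be order}) no orientation subtleties arise.
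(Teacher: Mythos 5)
Your proof is correct (modulo the properness point you flag), but it takes a genuinely different route from the paper's. The paper argues by induction on the stratum depth: it observes that $\rest{f}{D[k]}\colon (D[k],D[k+1]) \to (Z[k],Z[k+1])$ is itself a fibrating boundary map with \emph{smooth} divisor, and then repeatedly invokes the smooth-divisor connectedness result, \cite[Proposition~5.25]{CK18} --- first to $\rest{f}{M\backslash D(2)}$ to get connectedness over $D[1]$, then to $\rest{f}{D[1]}$ to get it over $D[2]$, and so on. You instead give a direct, one-shot argument by contradiction, valid for all $k$ at once, combining the local normal form of Lemma~\ref{lem:boundmaplocform} with a tube-lemma enclosure $f^{-1}(V) \subseteq U_1 \sqcup U_2$: any nearby regular fibre limits onto both pieces of a hypothetical disconnection and so would itself be disconnected. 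Your approach is more elementary and self-contained, avoiding both the stratum-by-stratum induction and the external reference; its cost is that it makes the properness requirement explicit rather than delegating it to the cited proposition. In practice this is harmless: the lemma is only ever applied here to boundary Lefschetz fibrations, which are proper by Definition~\ref{def:blf}, and the paper's route via \cite{CK18} will carry an equivalent hypothesis. One small wording nitpick: the phrase ``$f^{-1}(q')$ meets every prescribed neighbourhood of every point of $f^{-1}(q)$'' should be understood with ``sufficiently close'' depending on the chosen point and neighbourhood; for the contradiction you only need to run the local-model argument at one point of $C_1$ and one of $C_2$ simultaneously, which is fine since any finite collection of such constraints can be met by a single $q'$.
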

\begin{proof}
The proof goes by induction over the strata. Note that $Z[k+1]$ is a hypersurface in $Z[k]$, and therefore
\begin{align*}
	\rest{f}{D[k]}\colon (D[k],D[k+1]) \rightarrow (Z[k],Z[k+1])
\end{align*}
is a fibrating boundary map for all $k \geq 0$. Applying \cite[Proposition 5.25]{CK18} to $\rest{f}{M \backslash D(2)}$ tells us that the fibres of $\rest{f}{D[1]}$ are connected. Thus we can apply the same result to $\rest{f}{D[1]}$ to conclude that the fibres of $\rest{f}{D[2]}$ are connected. Continuing inductively we arrive at the desired result.
\end{proof}
The conditions imposed on the maps have, up to this point, been on behaviour near $D$. Next we impose the conditions away from $D$:
\begin{definition}\label{def:bf}
A \textbf{boundary fibration} is a fibrating boundary map $f\colon (M,D) \to (N,Z)$ such that $\rest{f}{M\backslash D}\colon M\backslash D \to N \backslash Z$ is a surjective submersion.
\end{definition}
\begin{definition}\label{def:blf}
A \textbf{boundary Lefschetz fibration} is a fibrating boundary map $f\colon (M^{2n},D) \to (\Sigma^2,Z)$ between oriented manifolds such that $\rest{f}{M\backslash D}\colon M\backslash D \to \Sigma \backslash Z$ is a Lefschetz fibration. That is, the map $f\colon M \to N$ is proper, $f|_{M\backslash D}$ is injective on critical points and for each critical point $p \in M\backslash D$ there exist orientation-preserving complex coordinate charts centered at $p$ and $f(p)$ in which $f$ takes the form
\begin{equation*}
	f\colon \cc^n \to \cc, \qquad f(z_1,\ldots,z_n) = z_1^2 + \ldots + z_n^2.\qedhere
\end{equation*}
\end{definition}
If $M$ is four dimensional, the definition above allows for three different types of singularities. It is worth giving them names:
\begin{definition}
Let $f\colon M^4 \to \Sigma^2$ be a smooth map.
\begin{itemize}
\item An \textbf{elliptic singularity} of $f$ is a point $p$ for which $f$ has the local expression
\[f(x_1,x_2,x_3,x_4) = (x_1^2+x_2^2,x_4), \qquad x_i \in \rr;\]
\item An \textbf{elliptic--elliptic singularity} of $f$ is a point $p$ for which $f$ has the local expression
\[f(x_1,x_2,x_3,x_4) = (x_1^2+x_2^2,x_3^2+x_4^2),\qquad x_i \in \rr;\]
\item A \textbf{Lefschetz singularity} of $f$ is a point $p$ for which $f$ has the local expression
\[f(z_1,z_2) = z_1^2+z_2^2, \qquad z_i \in \cc.\qedhere\]
\end{itemize}
The level sets associated to these singularities are, respectively, an \textbf{elliptic, elliptic--elliptic and Lefschetz fibre}.
\end{definition}
The first two singularities above happen at the different strata of the divisor whereas the Lefschetz singularities do not interact with the divisor. In dimension four the geometry of these fibrations can be understood.
\begin{proposition}
Let $f\colon (M^4,D^2) \rightarrow (\Sigma^2,Z)$ be a boundary Lefschetz fibration with connected fibres, and let $D'$ be a connected component of $D$. Then
\begin{itemize}
\item The generic fibres of $f$ near $D$ are tori;
\item When $D'[2]$ has $k \geq 1$ points, then $D'$ is a union of $k$ pairwise transversely intersecting spheres.
\end{itemize}
In particular, if $D'[2] \neq \emptyset$, then $D'$ is co-orientable.
\end{proposition}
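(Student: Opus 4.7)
The plan is to combine the pointwise normal forms of Lemma~\ref{lem:boundmaplocform} with the connectedness of fibres from Lemma~\ref{lem:confibresoverD} to analyse $f|_{D'}$ stratum by stratum, and then to use the orientability of $M$ and $\Sigma$ to pin down the topology of the generic fibre.

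First I would analyse $f|_{D'}$. Since $f$ is a strong map of pairs on each stratum, the fibre over a smooth point $z_0 \in Z'[1]$ lies in $D'[1]$ and the fibre over a corner $c \in Z'[2]$ lies in $D'[2]$; by Lemma~\ref{lem:confibresoverD} these are connected, and by properness of $f$ they are compact. Hence the former is a circle in the smooth $2$-manifold $D'[1]$ and the latter is a single point, giving a bijection $f\colon D'[2] \to Z'[2]$. The local normal forms also show that $f$ is open onto a manifold-with-corners neighbourhood of any value $f(p)$ with $p \in D'$, so $Z' := f(D')$ is both open and closed in $Z$; it is therefore a whole connected component of $Z$, topologically a circle with $k := |D'[2]|$ corners and $k$ open edges. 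By Ehresmann's theorem applied to the proper submersion $f\colon D'[1] \to Z'[1]$, the preimage of each edge $e$ is a trivial circle bundle $C_e \cong S^1 \times (0,1)$.

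Next I would close up each $C_e$ using the node normal form $f(z_1, z_2) = (|z_1|^2, |z_2|^2)$: as a point moves along $e$ toward the adjacent corner $c = f(p)$, the fibre circle $\{|z_j|^2 = t\}$ of $f|_{D'[1]}$ shrinks to $p$ as $t \to 0^+$. Hence $\bar{C_e}$ caps off each end of the cylinder with the corresponding node, yielding a smoothly immersed $2$-sphere in $M$ (embedded when $k \geq 2$, with a single self-intersection when $k = 1$); the same local model shows that two such spheres sharing a node meet transversely there, proving the second bullet. The generic fibre is then identified in two local regimes: for a regular value $z$ close to $z_0 \in Z'[1]$, properness forces $F_z$ into a tubular neighbourhood of $\gamma := f^{-1}(z_0) \cong S^1$, where the local form $(x_1^2 + x_2^2, x_4)$ presents $F_z$ as a circle bundle over $\gamma$; for $z$ close to a corner, the node normal form gives $F_z \cong S^1 \times S^1$ directly. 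Orientability of $M$ and $\Sigma$ induces an orientation on $F_z$, ruling out the Klein bottle and forcing $F_z \cong T^2$.

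Finally, co-orientability of $D'$ when $D'[2] \neq \emptyset$ follows because each irreducible component $\bar{C_e}$ is an (immersed) $S^2$ in $M$, and any rank-$2$ real bundle over $S^2$ is orientable since $H^1(S^2;\mathbb{Z}/2) = 0$; restriction then yields an orientable normal bundle on each cylinder of $D'[1]$. The main subtlety is the capping-off step: one needs the local normal form of Lemma~\ref{lem:boundmaplocform} at $D[2]$ together with properness of $f$ to guarantee that the fibre circles of $f|_{D'[1]}$ genuinely collapse to a single node at each end, rather than accumulating on some larger limit set.
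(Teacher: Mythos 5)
Your proposal is correct and matches the paper's strategy quite closely for the second bullet and the co-orientability claim: both identify $Z'[1]$ as $k$ open edges of a boundary circle, use connectedness (Lemma~\ref{lem:confibresoverD}) to see that $f|_{D'[1]}$ has circle fibres so each $C_e$ is a cylinder, and then cap the cylinders at the nodes using the local model to produce $k$ pairwise transverse immersed spheres; co-orientability then comes from $H^1(S^2;\zz/2)=0$, exactly as in the paper. The one genuine divergence is the first bullet: the paper simply cites \cite[Corollary 5.18]{CK18} for the tori statement, whereas you give a direct argument. Your direct argument is sound, though the step ``properness forces $F_z$ into a tubular neighbourhood of $\gamma$, where the local form presents $F_z$ as a circle bundle over $\gamma$'' deserves a bit more care: one should say that the composition of the tubular-neighbourhood projection with the inclusion $F_z \hookrightarrow M$ is, by the normal form $(x_1^2+x_2^2,x_4)$, a submersion $F_z \to \gamma$ with circle fibres, and then orientability of $F_z$ (inherited from orientations of $M$ and $\Sigma$ via the paper's fibre-orientation convention) rules out the Klein bottle. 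Stated that way the argument is complete and gives a self-contained alternative to the citation.
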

\begin{proof}
The first point follows immediately from \cite[Corollary 5.18]{CK18}.

For the second, assume that $D'[2]$ has at least one point. Then $\rest{f}{D[1]}\colon D[1] \rightarrow Z[1]$ is a surjective submersion by assumption, which by Lemma~\ref{lem:confibresoverD} has connected fibres. The corresponding locus $Z'[1]$ is a disjoint union of $k$ open intervals, and as the fibres of $\rest{f}{D[1]}$ are connected they must be circles. Therefore $D'[1]$ has to be a disjoint union of cylinders. The immersed submanifold $D'$ is obtained from $D'[1]$ by replacing the boundary circles by points and pairwise glueing these points, which implies it is as described above.

Finally, because each component of $D'$ is an immersed sphere and thus automatically co-orientable, each component of $D'[1]$ is also co-orientable.
\end{proof}
Therefore, to construct stable generalized complex structures using Corollary~\ref{cor:ellitocomplex}, the condition of co-orientability of $D$ is satisfied as long as $D'[2]$ is nonempty for every component $D'$ of $D$. For smooth components $D'$ of $D$ however, i.e.\ when $D'[2] = \emptyset$, co-orientability is not guaranteed.

\subsection{Boundary maps and Lie algebroids}
Given that the ideals $I_Z$ and $I_{|D|}$ of a boundary map determine Lie algebroids, one should expect that boundary maps (and their further specializations) are compatible with them. This is indeed the case. 
\begin{lemma}\label{lem:boundarymapmorphism}
Let $f\colon (M,D) \rightarrow (N,Z)$ be a boundary map. Then there is a Lie algebroid morphism $(\varphi,f)\colon \elli \rightarrow \mathcal{A}_Z$ such that $\varphi \equiv df$ on sections.
\end{lemma}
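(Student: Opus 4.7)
The plan is to construct $\varphi$ first on the open dense complement $M \setminus D$, where the anchors $\rho_{|D|}$ and $\rho_Z$ are fiberwise isomorphisms, and then to extend it smoothly across $D$ by a local computation. Since $f^{-1}(Z) = D$, the map $f$ restricts to $M \setminus D \to N \setminus Z$, and on these opens the composition
\[
\varphi_0 := f^*(\rho_Z^{-1}) \circ df \circ \rho_{|D|}\colon \mathcal{A}_{|D|}|_{M\setminus D} \longrightarrow f^*\mathcal{A}_Z|_{M\setminus D}
\]
is a well-defined bundle map tautologically satisfying $\rho_Z \circ \varphi_0 = df \circ \rho_{|D|}$. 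Once extended to all of $M$, this anchor identity is precisely what is meant by the statement $\varphi \equiv df$ on sections.

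To establish smooth extension, fix $p \in D$ and work in adapted coordinates from Remark~\ref{rem:exampledivisors}, where $I_{|D|} = \langle r_1^2 \cdots r_\ell^2 \rangle$ near $p$ and $I_Z = \langle x_1 \cdots x_k \rangle$ near $f(p)$. With respect to the local frames $\{r_i\partial_{r_i}, \partial_{\theta_i}, \partial_{u_c}\}$ of $\mathcal{A}_{|D|}$ and $\{x_a\partial_{x_a}, \partial_{y_b}\}$ of $\mathcal{A}_Z$, a direct computation gives for instance
\[
\varphi_0(r_i\partial_{r_i}) = \sum_a \frac{r_i\partial_{r_i}(f^*x_a)}{f^*x_a}\, f^*(x_a\partial_{x_a}) + \sum_b r_i\partial_{r_i}(f^*y_b)\, f^*\partial_{y_b},
\]
with analogous expressions for $\varphi_0(\partial_{\theta_i})$ and $\varphi_0(\partial_{u_c})$. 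The coefficients $r_i\partial_{r_i}(f^*y_b)$ and their analogues are manifestly smooth, so the only question is smooth extension of coefficients of the form $X(f^*x_a)/f^*x_a$ for $X \in \{r_i\partial_{r_i}, \partial_{\theta_i}\}$.

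The main obstacle is exactly this smoothness, which I would extract from the following local factorisation claim: because $f^*\langle x_1\cdots x_k\rangle = \langle r_1^2\cdots r_\ell^2\rangle$ as divisor ideals, each individual pullback decomposes as
\[
f^*x_a = u_a \cdot \prod_{j \in S_a} r_j^2
\]
for a smooth unit $u_a$ and a subset $S_a \subseteq \{1,\ldots,\ell\}$, with the $S_a$ partitioning the multi-set of elliptic factors. I would establish this by passing to the formal power series ring at $p$, which is regular and hence a UFD in which the $x_j^2 + y_j^2$ are distinct real irreducibles; the identity $\prod_a f^*x_a = u \prod_j (x_j^2+y_j^2)$ then forces the decomposition up to units, and a Borel-type argument lifts it from formal to smooth germs. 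Granted this, each quotient simplifies to $X(f^*x_a)/f^*x_a = X(u_a)/u_a + \sum_{j \in S_a} X(r_j^2)/r_j^2$, which is smooth term-by-term: the unit summand is trivially smooth, and the elliptic summands are smooth because every $X \in \Gamma(\mathcal{A}_{|D|})$ preserves each individual factor ideal $\langle r_j^2\rangle$ (as visible from the local generators in Remark~\ref{rem:liealgebroidlocal}).

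Once smooth extension is in hand, $\varphi_0$ extends to a smooth bundle map $\varphi\colon \mathcal{A}_{|D|} \to f^*\mathcal{A}_Z$, and the relation $\rho_Z \circ \varphi = df \circ \rho_{|D|}$ holds on $M \setminus D$ and hence everywhere by continuity. The Lie algebroid bracket compatibility I would verify dually: the induced pullback of forms $\varphi^*\colon \Omega^\bullet(\mathcal{A}_Z) \to \Omega^\bullet(\mathcal{A}_{|D|})$ is generated by $\varphi^*(dy_b) = d(f^*y_b)$ and $\varphi^*(dx_a/x_a) = d(f^*x_a)/f^*x_a$, both of which lie in $\Omega^\bullet(\mathcal{A}_{|D|})$ by the above, and $\varphi^*$ commutes with $d$ since it does so on smooth forms and since $d(d(f^*x_a)/f^*x_a)=0$ on the log generators. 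This is precisely the Lie algebroid morphism condition, completing the construction of $(\varphi,f)$.
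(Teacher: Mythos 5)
Your overall strategy is sound and runs parallel to the paper's: both proofs hinge on showing that each $f^*x_a$ factors as a smooth unit times a product of the $r_j^2$, and from there the conclusion is essentially the same computation (your equivalent dual verification at the end is, in fact, exactly how the paper phrases the argument, by invoking \cite[Prop.~3.14]{CK18} to reduce the whole lemma to showing that $f^*(d\log x_a)$ is a well-defined elliptic form). The paper obtains the factorisation from the product identity $f^*(x_1\cdots x_k)=g\,r_1^2\cdots r_s^2$ together with ``functional indivisibility'' of the $r_j^2$; you try to re-derive it from scratch, which is where the gap lies.

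The step ``a Borel-type argument lifts it from formal to smooth germs'' is not correct as stated. Borel's lemma produces a smooth germ with a prescribed Taylor series, but it gives no control over the function away from the jet: if $\hat a = \hat g\,\hat h$ in $\mathbb{R}[[x]]$ and one picks smooth $g,h$ with these Taylor series, there is no reason why $a = gh$ as germs --- only that $a - gh$ is flat. In other words, formal divisibility of Taylor series by $x_j^2+y_j^2$ does \emph{not}, by itself, imply smooth divisibility ($x^2/(x^2+y^2)$ has Taylor series divisible by $x^2+y^2$ trivially only because $x^2$ itself is a formal multiple of $x^2+y^2$ --- but the quotient function fails to be even continuous). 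What you genuinely need, and what ``functional indivisibility'' encodes, is that each $r_j^2$ is \emph{prime} in the ring of smooth germs: if $r_j^2 \mid ab$ smoothly then $r_j^2 \mid a$ or $r_j^2 \mid b$. Your formal UFD computation is useful for identifying \emph{which} $f^*x_a$ are units and which pick up which $r_j^2$ (the partition $S_a$ is correctly determined at the level of jets), but to conclude $f^*x_a = u_a\prod_{j\in S_a} r_j^2$ with $u_a$ a smooth unit you must then divide in the smooth category --- either by invoking a primeness/division result for elliptic generators, or by an induction over the strata of $D$. As written, the formal-to-smooth bridge is missing, and since that is exactly the load-bearing step, the proof as stated does not close.
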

\begin{proof}
To prove that $df$ induces a Lie algebroid morphism $\varphi$, by \cite[Proposition 3.14]{CK18} it suffices to show that $f^*$ extends to an algebra morphism $\varphi^*\colon \Omega^{\bullet}(\mathcal{A}_Z) \rightarrow \Omega^{\bullet}(\elli)$. This can be done locally, so given $p \in D$ and $f(p) \in Z$ consider coordinates adapted to the divisors as in Remark~\ref{rem:exampledivisors}:
\begin{equation*}
	(X_1,Y_1,\ldots,X_s,Y_s,X_{2s+1},\ldots,X_n) \text{ around } p, \qquad (x_1,\ldots,x_j,y_{j+1},\ldots,y_{m}) \text{ around } f(p).
\end{equation*}
In these coordinates we have
\begin{align*}
	\Omega^{\bullet}(\elli) &= \inp{d\log r_1,d\theta_1,\ldots,d\log r_s,d\theta_s,dX_{2s+1},\ldots, dX_n},\\
	\Omega^{\bullet}(\mathcal{A}_Z) &= \inp{d\log x_1,\ldots,d\log x_j,dy_{j+1},\ldots,dy_m}.
\end{align*}
We must verify that $f^*(d \log x_i)$ defines an elliptic form. Because $f$ is a morphism of divisors and the ideals are locally principal, it sends generators to generators thus there must exist a nowhere-vanishing function $g$ such that $f^*(x_1\cdot\ldots\cdot x_j) = gr_1^2\cdot\ldots\cdot r_s^2$. Consequently, by functional indivisibility of the $r_i^2$ we conclude that $f^*(x_i) = h r_{i_1}^2\cdot\ldots\cdot r_{i_\ell}^2$ for some nowhere vanishing function $h$ and (possibly empty) subset $\{i_1,\ldots, i_\ell\} \subseteq \{1,\ldots,s\}$. We find that
\begin{equation*}
	f^*(d\log x_i) = d\log f^*(x_i) = d\log h + 2d\log r_{i_1}+ \ldots +2d\log r_{i_l},
\end{equation*}
which is an elliptic form as desired, so that $\varphi$ is a Lie algebroid morphism.
\end{proof}
The conditions imposed on boundary maps have a direct counterpart in Lie algebroid language. Given a Lie algebroid $\rho_\mathcal{A}\colon \mathcal{A} \to M$, let $M_\mathcal{A}$ be the open subset where the anchor map is an isomorphism.
\begin{definition}[\cite{CK18}] A Lie algebroid morphism $(\varphi,f)\colon (\mathcal{A},M) \to (\mathcal{A}',N)$ is said to be a:
\begin{itemize}
	\item \textbf{Lie algebroid fibration} if the induced morphism $\varphi^{!}\colon \mathcal{A} \to f^* \mathcal{A}'$ is surjective;
	\item \textbf{Lie algebroid Lefschetz fibration} if $M_{\mathcal{A}}$ is dense, $f^{-1}(N_{\mathcal{A}'}) = M_{\mathcal{A}}$ and there exists a discrete set $\Delta \subset M_{\mathcal{A}}$ such that
	\begin{itemize}
		\item $f|_{M_\mathcal{A}}\colon M_{\mathcal{A}} \to N_{\mathcal{A}'}$ is a Lefschetz fibration with ${\rm Crit}(f|_{M_\mathcal{A}}) = \Delta$;
		\item $(\varphi,f)\colon (\mathcal{A}, M\backslash f^{-1}(f(\Delta))) \to (\mathcal{A}',N\backslash f(\Delta))$ is a Lie algebroid fibration.
	\end{itemize}
	Note that the Lefschetz condition forces that ${\rm rank}(\mathcal{A}) = 2n$ and ${\rm rank}(\mathcal{A}') = 2$.\qedhere
\end{itemize}
\end{definition}
The following lemmas follow immediately from the definition, combined with Lemma~\ref{lem:boundarymapmorphism}.
\begin{lemma}\label{lem:algfib}
Let $f\colon (M,D) \rightarrow (N,Z)$ be a boundary fibration. Then there is a Lie algebroid fibration $(\varphi,f)\colon (\mathcal{A}_{\abs{D}},M) \rightarrow (\mathcal{A}_Z,N)$ such that $\varphi \equiv df$ on sections.
\end{lemma}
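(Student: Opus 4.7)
The plan is to build on Lemma~\ref{lem:boundarymapmorphism}, which already supplies the Lie algebroid morphism $(\varphi,f)\colon \mathcal{A}_{\abs{D}} \to \mathcal{A}_Z$, and then verify that the induced map $\varphi^!\colon \mathcal{A}_{\abs{D}} \to f^*\mathcal{A}_Z$ is fibrewise surjective. Since surjectivity is a pointwise property, the proof will reduce to two local checks, one away from $D$ and one on $D$, and in each case I want to exhibit enough elements of the domain that clearly span the target fibre.

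First I would consider a point $p \in M\backslash D$. Here $\mathcal{A}_{\abs{D}}|_{M\backslash D} = T(M\backslash D)$ and $\mathcal{A}_Z|_{N\backslash Z} = T(N\backslash Z)$, and the morphism $\varphi$ coincides with $df$. The assumption that $f$ is a boundary fibration guarantees that $f\colon M\backslash D \to N\backslash Z$ is a surjective submersion, so $df_p^!$ is surjective onto $T_{f(p)}N = (\mathcal{A}_Z)_{f(p)}$.

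The substantive step is the check at a point $p \in D[k]$. I would invoke the local normal form of Lemma~\ref{lem:boundmaplocform} to obtain coordinates $(x_1,\ldots,x_n)$ near $p$ and $(z_1,\ldots,z_k,y_{k+1},\ldots,y_m)$ near $f(p)$ in which
\begin{equation*}
f(x_1,\ldots,x_n)=(x_1^2+x_2^2,\ldots,x_{2k-1}^2+x_{2k}^2,x_{n-m+k+1},\ldots,x_n).
\end{equation*}
Using the polar coordinates $(r_i,\theta_i)$ for the pairs $(x_{2i-1},x_{2i})$ as in Remark~\ref{rem:liealgebroidlocal}, together with the generating sets for $\Omega^\bullet(\mathcal{A}_{\abs{D}})$ and $\Omega^\bullet(\mathcal{A}_Z)$ listed there, I can compute $\varphi$ directly from the identity $\varphi^*(d\log z_i)=2\, d\log r_i$ established in the proof of Lemma~\ref{lem:boundarymapmorphism}. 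Pairing against the dual frames yields
\begin{equation*}
\varphi(r_i\partial_{r_i}) = 2\,z_i\partial_{z_i}, \qquad \varphi(\partial_{\theta_i})=0, \qquad \varphi(\partial_{x_{n-m+k+j}})=\partial_{y_{k+j}},
\end{equation*}
so the induced map $\varphi^!_p\colon (\mathcal{A}_{\abs{D}})_p \to (\mathcal{A}_Z)_{f(p)}$ hits each generator $z_i\partial_{z_i}|_{f(p)}$ and $\partial_{y_j}|_{f(p)}$ of $(\mathcal{A}_Z)_{f(p)}$ and is therefore surjective.

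The expected obstacle is essentially bookkeeping: one must be careful that the local expressions for the morphism $\varphi$ really arise from the local normal form of $f$, and that the coordinates adapted to $(\mathcal{A}_{\abs{D}}, I_{\abs{D}})$ match those produced by Lemma~\ref{lem:boundmaplocform}. Once the local frames are aligned, the computation above is immediate and the two pointwise surjectivity statements combine to give the desired Lie algebroid fibration.
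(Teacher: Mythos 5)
Your proof is correct and matches the approach the paper intends: the paper simply asserts that this lemma ``follows immediately from the definition, combined with Lemma~\ref{lem:boundarymapmorphism},'' and your argument supplies precisely those routine details — invoking Lemma~\ref{lem:boundarymapmorphism} for the morphism, then checking fibrewise surjectivity of $\varphi^!$ separately away from $D$ (where it is the submersion condition) and along $D[k]$ (where the local normal form of Lemma~\ref{lem:boundmaplocform} gives the explicit frame computation). As a minor aside, your indexing $x_{n-m+k+1},\ldots,x_n$ for the tail coordinates is actually more accurate than the expression printed in Lemma~\ref{lem:boundmaplocform}.
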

\begin{lemma}\label{lem:alglffib}
Let $f\colon (M^4,D^2) \rightarrow (N^2,Z^1)$ be a boundary Lefschetz fibration. Then there is a Lie algebroid Lefschetz fibration $(\varphi,f)\colon (\mathcal{A}_{\abs{D}},M^4) \rightarrow (\mathcal{A}_Z,N^2)$ such that $\varphi \equiv df$ on sections.
\end{lemma}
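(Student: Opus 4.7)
The plan is to reduce essentially everything to Lemma~\ref{lem:boundarymapmorphism} and Lemma~\ref{lem:algfib}, exploiting the fact that a boundary Lefschetz fibration differs from a boundary fibration only by the presence of isolated Lefschetz critical points that sit away from the divisor. Since a boundary Lefschetz fibration is in particular a boundary map, Lemma~\ref{lem:boundarymapmorphism} immediately produces a Lie algebroid morphism $(\varphi, f)\colon (\mathcal{A}_{|D|}, M^4) \to (\mathcal{A}_Z, N^2)$ with $\varphi \equiv df$ on sections. What remains is to verify the three bulleted conditions in the definition of a Lie algebroid Lefschetz fibration.

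First I would identify the nondegeneracy loci of the two Lie algebroids. The local generators in Remark~\ref{rem:liealgebroidlocal} show that the anchor of $\mathcal{A}_{|D|}$ is an isomorphism exactly off $D$, and similarly the anchor of $\mathcal{A}_Z$ is an isomorphism exactly off $Z$. Hence $M_{\mathcal{A}_{|D|}} = M \setminus D$ and $N_{\mathcal{A}_Z} = N \setminus Z$. The former is dense since $D$ has codimension two, and the strong map of pairs condition $f^{-1}(Z) = D$ gives $f^{-1}(N_{\mathcal{A}_Z}) = M_{\mathcal{A}_{|D|}}$. The Lefschetz condition on $f|_{M \setminus D}\colon M \setminus D \to N \setminus Z$ is built into Definition~\ref{def:blf}, so the critical set $\Delta := {\rm Crit}(f|_{M \setminus D})$ is discrete and disjoint from $D$.

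Finally, to establish the second sub-bullet, I would apply Lemma~\ref{lem:algfib} to the restriction
\[
f'\colon (M \setminus f^{-1}(f(\Delta)), D) \to (N \setminus f(\Delta), Z).
\]
Since $\Delta \subset M \setminus D$, we have $f(\Delta) \cap Z = \emptyset$, so excising $f(\Delta)$ and $f^{-1}(f(\Delta))$ does not affect either divisor. The map $f'$ is thus still a fibrating boundary map, and its restriction to the complement of $D$ is a surjective submersion because all Lefschetz critical points have been removed; that is, $f'$ is a boundary fibration in the sense of Definition~\ref{def:bf}. Lemma~\ref{lem:algfib} then produces the required Lie algebroid fibration on $M \setminus f^{-1}(f(\Delta))$. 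I do not expect any substantive obstacle; the one point worth checking carefully is the disjointness $\Delta \cap D = \emptyset$, which is what guarantees that the boundary fibration structure of $f'$ is undisturbed by the excision.
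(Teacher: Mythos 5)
Your argument is correct and follows exactly the route the paper intends. The paper states that this lemma (along with Lemma~\ref{lem:algfib}) ``follows immediately from the definition, combined with Lemma~\ref{lem:boundarymapmorphism},'' and your proposal simply spells out the verification: the morphism comes from Lemma~\ref{lem:boundarymapmorphism}, the identifications $M_{\mathcal{A}_{|D|}}=M\setminus D$ and $N_{\mathcal{A}_Z}=N\setminus Z$ follow from the local generators, density and $f^{-1}(N_{\mathcal{A}_Z})=M_{\mathcal{A}_{|D|}}$ come from codimension two and the strong-map-of-pairs condition, and the Lie algebroid fibration condition away from $f^{-1}(f(\Delta))$ is handled by restricting and invoking Lemma~\ref{lem:algfib}, which is legitimate precisely because $\Delta$ is disjoint from $D$.
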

We summarise these statements and the relationship between the different concepts in the table below:\\
\begin{center}
\begin{tabular}{|c c c|}
\hline
Boundary (Lefschetz) fibration &$\Rightarrow$& Lie algebroid (Lefschetz) fibration\\
$\Downarrow$& & $\Downarrow$\\
Fibrating boundary map &$\Rightarrow$& Lie algebroid map {\it submersive} over the singular locus\\
$\Downarrow$& & $\Downarrow$\\
Boundary map &$\Rightarrow$& Lie algebroid map\\
\hline
\end{tabular}\\[24pt]
\end{center}
\subsection{Construction of self-crossing stable generalized complex structures}
With the desired notion of boundary Lefschetz fibration in hand, we are set to prove our first result relating them to stable generalized complex structures.  

From now on we will adopt the following convention: given a boundary Lefschetz fibration $f \colon (M,D) \to (N,Z)$, we will orient the fibres of $f\colon M\backslash D \to N\backslash Z$ by declaring that the orientation of the fibre together with the orientation of the base yield the orientation of $M$, so that each fibre determines a homology class on $M\backslash D$. With this convention, integration over the fibre is a well-defined operation which induces the natural pairing between homology and cohomology.
\begin{definition} A boundary Lefschetz fibration, $f\colon(M^4,D) \to (N^2,Z)$, is \textbf{homologically essential} if the homology class $[F]$ of a fibre of $f\colon M \backslash D \rightarrow N \backslash Z$ is non-trivial in $H_2(M\backslash D;\rr)$ or, equivalently, if there is a class $c \in H^2(M\backslash D;\rr)$ such that $\langle c,[F]\rangle \neq 0$.
\end{definition}
\begin{definition} A boundary Lefschetz fibration, $f\colon(M^4,D) \to (N^2,Z)$, and an elliptic symplectic form $\omega \in \Omega^2(\mathcal{A}_{|D|})$ are \textbf{compatible} if $\ker \varphi \subseteq \mathcal{A}_{|D|}$ consists of symplectic vector spaces, where $\varphi\colon \mathcal{A}_{|D|} \to \mathcal{A}_Z$ is the induced map of Lie algebroids.
\end{definition}
In what follows we will have two ongoing simplifying assumptions:
\begin{enumerate}
\item We will assume that the target manifold is $(N,\partial N)$. This is not a restriction since by Proposition~\ref{prop:boundarification of maps} we can lift $f$ to a boundary Lefschetz fibration over the boundarification of $(N,Z)$;
\item We will assume that the level sets of $f$ are connected. This also is not restriction since by \cite[Proposition 5.24]{CK18} we may assume that the generic fibres of $f$ are connected and Lemma~\ref{lem:confibresoverD} then implies that the level sets over $Z[1]$ and $Z[2]$ are connected as well.
\end{enumerate}
Before we continue it is worth to stop and take stock of where we stand and place our quest into context. The case when the elliptic divisor is smooth was already treated in \cite{CK18}. Even though there the authors only dealt with the compact case, the following is an immediate generalisation for a proper map:
\begin{theorem}[\cite{CK18}, Theorem 7.1]\label{th:wishful}
Let $(M^4,I_{|D|})$ be an oriented manifold with a smooth elliptic divisor and let $f\colon (M,D) \rightarrow (N^2,Z,\omega_{N})$ be a homologically essential, proper, Lefschetz fibration with connected fibres over a possibly open log-symplectic surface. Denote by $\phi\colon \mathcal{A}_{|D|} \to \mathcal{A}_Z$ the induced map of Lie algebroids. Let $c \in H^2(M\backslash D) = H^2_{0,0}(M,\mathcal{A}_{|D|})$ be a cohomology class such that $\langle c, [F]\rangle > 0$, where $F$ is a regular fibre of $f$. Then there exists a closed two-form $\eta \in \Omega^2_{0,0}(M,\mathcal{A}_{|D|})$ with $[\eta] =c$ and a positive function $\rho_0 \in C^\infty(N)$ such that:
\begin{itemize}
\item $\eta$ is fibrewise nondegenerate, that is, for every $p \in M$, $\eta$ is nondegenerate in $\mathrm{ker}(\phi_p)$,
\item The form $\omega = \eta + f^*(\rho \omega_{\Sigma})$ is symplectic with zero elliptic residue on $\mathcal {A}_{|D|}$ for every $\rho\in C^\infty (N)$ as long as  $\rho \geq \rho_0$.
\end{itemize}
\end{theorem}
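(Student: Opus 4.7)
The plan is to adapt the classical Gompf--Thurston construction to the elliptic Lie algebroid setting. Denote by $\varphi\colon \mathcal{A}_{|D|}\to \mathcal{A}_Z$ the algebroid morphism induced by $f$ via Lemma~\ref{lem:algfib}. The strategy has three steps: (i) pick a closed representative of $c$; (ii) modify it within its cohomology class so that it becomes fibrewise nondegenerate on $\ker\varphi_p$ at every $p$; and (iii) add a large multiple of $f^*\omega_N$ to obtain nondegeneracy on all of $\mathcal{A}_{|D|}$.

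By Lemma~\ref{lem:zerorescohom} the class $c$ has a closed representative $\eta_0 \in \Omega^2_{0,0}(\mathcal{A}_{|D|})$. For each $q \in N$ I would construct on a neighbourhood $U_q$ of the fibre $f^{-1}(q)$ a closed, fibrewise-nondegenerate form $\eta_q \in \Omega^2_{0,0}(U_q,\mathcal{A}_{|D|}|_{U_q})$ cohomologous to $\eta_0|_{U_q}$, in three cases. If $q \in N\backslash Z$ is a regular value, the fibre $F_q$ is a closed oriented surface with $\int_{F_q}\eta_0 = \langle c, [F]\rangle > 0$, and a standard Moser-type argument in a tubular neighbourhood produces $\eta_q$. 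If $q$ is a Lefschetz critical value, the model $dx_1\wedge dy_1 + dx_2\wedge dy_2$ in the chart $f(z_1,z_2) = z_1^2 + z_2^2$ handles the singular fibre as in \cite{GS99,CK18}; note that at the critical point itself $\ker\varphi_p = T_pM$ so fibrewise nondegeneracy there means being symplectic in the chart. If $q \in Z$, the normal form of Lemma~\ref{lem:boundmaplocform} provides coordinates in which $\ker\varphi$ is spanned throughout $U_q$ by $\partial_\theta$ and $\partial_{x_3}$; moreover $U_q\backslash D$ retracts to a torus, so by Lemma~\ref{lem:zerorescohom} we have $H^2_{0,0}(U_q,\mathcal{A}_{|D|}|_{U_q}) \cong H^2(T^2) \cong \mathbb{R}$, and an appropriate multiple of the closed form $d\theta \wedge dx_3 \in \Omega^2_{0,0}$ is fibrewise nondegenerate and represents $[\eta_0|_{U_q}]$.

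With $\beta_q \in \Omega^1_{0,0}(U_q,\mathcal{A}_{|D|}|_{U_q})$ chosen so that $\eta_q - \eta_0|_{U_q} = d\beta_q$, pick a locally finite cover $\{U_{q_i}\}_{i\in I}$ of $N$ and a subordinate partition of unity $\{\rho_i\}$, and set
\[
	\eta \;:=\; \eta_0 + d\Bigl(\sum_i (f^*\rho_i)\beta_{q_i}\Bigr).
\]
This form is closed, represents $c$, and lies in $\Omega^2_{0,0}$ since that subcomplex is preserved by multiplication by smooth functions. To verify fibrewise nondegeneracy, observe that for $v \in \ker\varphi_p$ one has $d(f^*\rho_i)(v) = d\rho_i(\varphi_p(v)) = 0$, so that
\[
	\eta|_{\ker\varphi_p} \;=\; \sum_i \rho_i(f(p))\,\eta_{q_i}|_{\ker\varphi_p}
\]
is a convex combination of consistently oriented area forms on $\ker\varphi_p$, hence nondegenerate.

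Finally, set $\omega := \eta + f^*(\rho\omega_N)$; this is closed because $\rho\omega_N$ is a top form on $N$. Pick a splitting $\mathcal{A}_{|D|}|_p = \ker\varphi_p \oplus W$: since $\dim W = 2 = \operatorname{rank}\mathcal{A}_Z$ we have $(f^*\omega_N)^2 = 0$, and one computes $\omega^2 = \eta^2 + 2\rho\, (\eta|_{\ker\varphi_p}) \wedge (f^*\omega_N|_W)$, a nonzero top form for $\rho$ large because $\eta|_{\ker\varphi_p}$ is nondegenerate by the previous step and $\omega_N|_{f(p)}$ is log-symplectic. Properness of $f$ allows these pointwise lower bounds to be packaged into a smooth positive function $\rho_0 \in C^\infty(N)$ for which $\omega$ is nondegenerate whenever $\rho \geq \rho_0$. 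The elliptic residue of $\omega$ vanishes: $\eta \in \Omega^2_{0,0}$ has zero radial hence zero elliptic residue, and in the coordinates of Lemma~\ref{lem:boundmaplocform} with $\omega_N = \lambda\,d\log x \wedge dy$ from Lemma~\ref{lem:logsympnormalform}, one computes $f^*\omega_N = 2\lambda\,d\log r \wedge dx_4$, whose contraction with $r\partial_r\wedge\partial_\theta$ vanishes. The main difficulty I foresee is the orientation compatibility of the local models $\eta_{q_i}|_{\ker\varphi_p}$ on overlaps, which is what makes the convex combination nondegenerate and rests on the fibre-orientation convention fixed just before the theorem.
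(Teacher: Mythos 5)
The paper cites this theorem from \cite{CK18}, Theorem~7.1, without providing a proof, so there is no proof in the present paper to compare against. Your proposal follows the classical Gompf--Thurston strategy adapted to symplectic Lie algebroids, which is exactly the route taken in \cite{CK18}: pick a zero-radial-residue representative via Lemma~\ref{lem:zerorescohom}, build local fibrewise-symplectic models for regular, Lefschetz, and elliptic fibres, patch them via a partition of unity pulled back from the base (so the correction term vanishes on $\ker\varphi$), then add a large multiple of $f^*\omega_N$ and use properness to extract a smooth $\rho_0$. The residue and cohomology computations are correct, and the reduction of $H^2_{0,0}(U_q)$ to $H^2(T^2)$ near an elliptic fibre is the right use of Lemma~\ref{lem:zerorescohom}.

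There is one genuine gap in the patching step. You assert that
\[
\eta|_{\ker\varphi_p} \;=\; \sum_i \rho_i(f(p))\,\eta_{q_i}|_{\ker\varphi_p}
\]
is ``a convex combination of consistently oriented area forms on $\ker\varphi_p$, hence nondegenerate.'' This argument is valid precisely when $\ker\varphi_p$ is $2$-dimensional. But you yourself observed that at a Lefschetz critical point $p$ one has $\ker\varphi_p = T_pM$, which is $4$-dimensional, and there a convex combination of symplectic forms need not be symplectic (compare $\omega$ and $-\omega$). Your local-finiteness argument therefore does not by itself guarantee nondegeneracy at the Lefschetz critical points. The standard repair --- which \cite{CK18} carries out --- is to choose the cover of $N$ so that each Lefschetz critical value $q_i$ lies in exactly one open $V_{q_i}$ and the bump function $\rho_{q_i}$ is identically $1$ on a neighbourhood of $q_i$; since the critical values are discrete this is always possible by shrinking the opens around nearby regular values. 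Then $\eta$ coincides with the Lefschetz local model $\eta_{q_i}$ on a neighbourhood of each critical point, and nondegeneracy there (and of $\omega = \eta + f^*(\rho\omega_N)$, which equals $\eta$ at the critical point since $df_p = 0$) follows from the model. This is easily inserted but must be said explicitly, since as written the convex-combination argument fails exactly at those points.
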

Apart from the theorem above, \cite{CK18} also includes a general Gompf--Thurston result for Lie algebroid Lefschetz fibrations: under similar conditions on a Lie algebroid Lefschetz fibration one can construct a Lie algebroid symplectic form on the domain by adding a form which is symplectic on the fibres to a large multiple of the pull back of a symplectic form on the base.
 
Neither result can be directly applied to our case: Theorem~\ref{th:wishful} does not work because our divisor is not smooth, while the failure of the general result on Lie algebroid fibrations to yield stable generalized complex structures can already be seen in the simplest example.
\begin{example}
Consider the boundary fibration:
\begin{align*}
	f_1\colon (\cc^2, D) \to (\rr^2,Z), \qquad f_1(z_1,z_2) = (|z_1|^2,|z_2|^2),
\end{align*}
where $D$ and $Z$ are the coodinate axes on $\cc^2$ and $\rr^2$ respectively, as in Example~\ref{ex:basic}.

We can endow $\rr^2$ with the log-symplectic structure $d\log x_1 \wedge d\log x_2$, and consider on $\cc^2$ the closed elliptic form
\[\eta = -d\theta_1 \wedge d\theta_2 + d\log r_1 \wedge d \theta_2 + d\log r_2 \wedge d\theta_1,\]
which is non-degenerate on the fibres of $f_1$. The Gompf--Thurston theorem then provides us with a $1$-parameter family of forms
\begin{align*}
	\omega_t = \eta+  tf^*(d\log x_1 \wedge d\log x_2),
\end{align*}
which is symplectic for $t>1$. This poses a problem: although this defines a legitimate elliptic symplectic form, there is no value of $t$ for which it corresponds to a stable generalized complex structure, since $\abs{\Res_{r_1r_2}\omega_t} \neq \abs{\Res_{\theta_1\theta_2}\omega_t}$ for $t>1$. We conclude that the process of scaling up the symplectic structure on the base to achieve nondegeneracy is incompatible with the residue conditions. 
\end{example}
What we do next is to adapt Theorem~\ref{th:wishful} for the self-crossing case.
\begin{theorem}\label{th:main}
Let $f\colon(M^4,D^2)\to (N^2,Z=\partial N)$ be a homologically essential boundary Lefschetz fibration with connected fibres between compact connected oriented manifolds. Denote by $\phi\colon \mathcal{A}_{|D|} \to \mathcal{A}_Z$ the induced map of Lie algebroids. Then $(M,I_{|D|})$ admits an elliptic symplectic structure with zero elliptic residue and imaginary parameter which is compatible with $f$.

If $D$ is co-orientable and the index of each connected component of $D$ is $1$ this elliptic symplectic structure induces a stable generalized complex structure.
\end{theorem}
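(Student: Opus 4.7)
The plan is to adapt the Gompf--Thurston construction from Theorem~\ref{th:wishful} to the self-crossing setting while simultaneously enforcing the imaginary parameter condition of Corollary~\ref{cor:ellitocomplex}. The strategy is to produce $\omega\in\Omega^2(\mathcal{A}_{\abs{D}})$ of the form
\[\omega=\eta+f^{*}(\rho\,\omega_N),\]
where $\omega_N\in\Omega^2(\mathcal{A}_Z)$ is a log-symplectic form on $(N,\partial N)$ supplied by Lemma~\ref{lem:logsympexistence}, the function $\rho\in C^{\infty}(N)$ is positive, and $\eta\in\Omega^2_{0,0}(\mathcal{A}_{\abs{D}})$ is a closed form representing a class in $H^2_{0,0}(\mathcal{A}_{\abs{D}})\cong H^2(M\backslash D;\mathbb{R})$ that pairs positively with the generic fibre class $[F]$ (which exists by the homological essentiality hypothesis combined with Lemma~\ref{lem:zerorescohom}), and is nondegenerate on the fibres of $f|_{M\backslash D}$. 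Compatibility of $\omega$ with $f$ will then follow from this fibrewise non-degeneracy together with the local description of $\ker\varphi$ near $D$.

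The main new difficulty, flagged by the example preceding the statement, is the residue matching at points $p\in D[2]$. In the local models of Lemma~\ref{lem:boundmaplocform} and Lemma~\ref{lem:logsympnormalform}, one has $\Res_{r_1r_2}(f^{*}(\rho\omega_N))(p)=\rho(f(p))\,\lambda_p$ for a constant $\lambda_p$ read off from $\omega_N$ near the corner $f(p)$, while all other mixed residues of $f^{*}(\rho\omega_N)$ vanish since it is a pullback from $\mathcal{A}_Z$. On the other hand, expanding $\eta$ in the local basis of Remark~\ref{rem:liealgebroidlocal} and using that $\iota_{r_i\partial_{r_i}}\eta$ vanishes on $D[1]$, hence at $p\in D[2]\subset\overline{D[1]}$ by continuity, all the residues $\Res_{r_ir_j}(\eta)(p)$ and $\Res_{r_i\theta_j}(\eta)(p)$ vanish automatically, leaving $\Res_{\theta_1\theta_2}(\eta)(p)$ as the only potentially nonzero residue of $\eta$ at $p$. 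The imaginary parameter condition on $\omega$ thus reduces to the pointwise equation
\[|\Res_{\theta_1\theta_2}(\eta)(p)|=|\rho(f(p))\,\lambda_p|\quad\text{for each }p\in D[2].\]
This can be enforced by modifying $\eta$, inside its cohomology class, via compactly supported closed elements of $\Omega^2_{0,0}$ built near each $p$ from the generator $d\theta_1\wedge d\theta_2$ of Remark~\ref{rem:liealgebroidlocal} and a bump function; by localising and shrinking the modifications one preserves both the pairing against $[F]$ and the fibrewise non-degeneracy of $\eta$.

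For global non-degeneracy we perform the Gompf--Thurston rescaling but take $\rho\equiv 1$ on a neighbourhood of $\partial N$, so that the residue computation above is unaffected, while making $\rho$ arbitrarily large in the interior. Then, as in the proof of Theorem~\ref{th:wishful}, $\omega$ is nondegenerate on $M\backslash D$, while nondegeneracy near $D$ is a direct check in the local models using the fibrewise non-degeneracy of $\eta$ and the control of residues. The elliptic residue $\Res_q\omega$ vanishes since $\eta\in\Omega^{\bullet}_{0,0}$ and $f^{*}(\rho\omega_N)$ is a pullback from $\mathcal{A}_Z$. This establishes the first claim, and the second follows by invoking Corollary~\ref{cor:ellitocomplex} once one checks that co-orientability together with the index-$1$ hypothesis force the parity of every connected component of $D$ to be $+1$ with respect to the orientation induced by $\omega$. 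The hard part of the argument is the tension between the pointwise residue matching at $D[2]$ and the global rescaling required for non-degeneracy, which is resolved by decoupling the two with a cutoff on $\rho$ near $\partial N$.
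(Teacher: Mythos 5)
Your high-level diagnosis is right: the genuinely new difficulty over the smooth case is the residue matching at points of $D[2]$, the other mixed residues of $\eta\in\Omega^2_{0,0}$ do vanish automatically at $D[2]$, and the imaginary parameter condition reduces to a single pointwise equation there. But the proposed execution has two real gaps, and the paper's proof resolves them by a construction you do not carry out.

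First, the modification of $\eta$. You propose to fix the residue at $p\in D[2]$ by adding to $\eta$ ``compactly supported closed elements of $\Omega^2_{0,0}$ built \dots\ from the generator $d\theta_1\wedge d\theta_2$ \dots\ and a bump function''. The obvious candidate $c\,\chi\, d\theta_1\wedge d\theta_2$ with $\chi$ a bump function is not closed ($d\chi\wedge d\theta_1\wedge d\theta_2\neq 0$), and a genuinely closed, compactly supported perturbation near $p$ represents a trivial class, so there is no reason it has the freedom to shift $\Res_{\theta_1\theta_2}(\eta)(p)$ to a prescribed value. What the paper does instead is to \emph{replace} $\eta$ near $p$ by the explicit closed model $\tilde\eta=\tfrac{\lambda}{4\pi^2}d\theta_2\wedge d\theta_1$, where $\lambda=\int_F\eta>0$; this is cohomologous to $\eta$ in the punctured neighbourhood (so $\tilde\eta=\eta+d\alpha$), has exactly the prescribed residue, has \emph{no} cross-terms $d\log r_i\wedge d\theta_j$ or $d\log r_1\wedge d\log r_2$, and is then glued to $\eta$ via $\eta+d((f^*\psi_2)\alpha)$.

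Second, the non-degeneracy argument. You propose to take $\rho\equiv 1$ near $\partial N$ and large in the interior and assert that non-degeneracy near $D$ is ``a direct check''. This is precisely where the difficulty sits: the form $\eta$ has cross-terms that vanish on $D[1]$ but not in any open neighbourhood, and Theorem~\ref{th:wishful} only guarantees non-degeneracy of $\eta+f^*(\rho\omega_N)$ for $\rho\geq\rho_0$, where nothing forces $\rho_0\leq 1$ near $\partial N$; near the corners of $N$ (after removing $Z[2]$) it may even be unbounded. Decreasing $\rho$ to $1$ there can kill non-degeneracy in the transition region. The paper evades this by keeping the large $k\rho\omega_N$ term away from $D[2]$ and only bringing the base form down to a bounded local model in a region where $\eta$ has already been replaced by $\tilde\eta$; since $\tilde\eta$ has no cross-terms, the pointwise form there is a sum of a fibrewise symplectic form and a pullback of a log-symplectic form, and the interpolation $(1-\psi_1)k\rho\omega_N+\psi_1\tfrac{\lambda}{4\pi^2}d\log x_1\wedge d\log x_2$ is a convex combination of log-symplectic forms inducing the same orientation, hence log-symplectic. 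Without the cross-term elimination this interpolation argument is unavailable, and with it there is no need (and no room) to set $\rho=1$ near all of $\partial N$.

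In short: the reduction of the residue condition to a single equation at $D[2]$ is the right observation, but the mechanism you propose for enforcing it and for preserving non-degeneracy does not work. The key missing idea is the two-step local replacement near $D[2]$: replace $\eta$ by a cohomologous cross-term-free model $\tilde\eta$, \emph{then} bring the pulled-back base form down to a fixed local log-symplectic form, each step preserving non-degeneracy for its own reason.
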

\begin{proof}
Fix a log-symplectic structure $\omega_N \in \Omega^2(N,\mathcal{A}_{Z})$. First we consider $f\colon M\backslash D[2] \to N\backslash Z[2]$. This is a homologically essential, proper, boundary Lefschetz fibration with smooth elliptic divisor, hence, by Theorem~\ref{th:wishful}, there is a form $\eta \in \Omega_{0,0}^2(M\backslash D[2];\log |D\backslash D[2]|)$ and a function $\rho_0 \in \Omega^0(N \backslash Z[2])$ (recall, Definition~\ref{def:zeroresform}) such that $\omega = \eta + f^*( \rho \omega_N)$ is a zero elliptic residue symplectic form for any function $\rho\in \Omega^0(N\backslash Z[2])$ with $\rho\geq \rho_0$.

Now we show how to change this construction so that the form it yields extends over $D[2]$, is elliptic symplectic with zero elliptic residue, and has imaginary parameter.

For each point $p \in D[2]$, fix open neighbouhoods $U_1 \Subset U_2 \Subset U_3$ and oriented coordinates charts defined on $U_3$  and $f(U_3)$ in which $f$ has the form
\[f(z_1,z_2) = (|z_1|^2, |z_2|^2).\]
As usual, we express the complex coordinates in $U_3$ in polar form, $z_i = r_i e^{i \theta_i}$, and denote by $(x_1,x_2)$ the coordinates on the base, so $f^*x_i = r_i^2$.

The strategy will be to change the symplectic form $\omega$ described above in a very precise way:
\begin{itemize}
\item in the complement of $U_3$, $\omega$ remains unchanged except for a further constant scaling of the symplectic form $\omega_N$,
\item in $U_3\backslash U_2$ we change $\eta$ into a multiple of $d\theta_2 \wedge d\theta_1$ and we  preserve nondegeneracy by rescaling the symplectic form $\omega_N$ by a constant,
\item in $U_2\backslash U_1$ we interpolate the possibly large $f^*\omega_N$ to $f^*(d\log x_1 \wedge d\log x_2)$ and observe that this interpolation does not spoil the symplectic condition,
\item in $U_1$ we extend the symplectic form as $d\theta_2 \wedge d\theta_1 + d\log r_1 \wedge d\log r_2$, which clearly has the desired properties at $p$.
\end{itemize}
Now we carry out his plan explicitly. Fix $\rho \geq \rho_0$. On $U_3$ we have by Lemma \ref{lem:zerorescohom} that
\[[\eta] \in H^2_{0,0}(U_3\backslash D[2],\mathcal{A}_{|D\backslash D[2]|}) \cong H^2(U_3\backslash D) =\rr,\]
and the generator of this cohomology pairs nonzero with the torus given in coordinates by $F = f^{-1}(r_1,r_2)$, where $r_1$ and $r_2$ are any two small positive numbers. Let $\lambda = \int_F \eta$ where integration is with respect to the fibre orientation of $F$, hence $\lambda >0$. On $U_3$ consider the elliptic form $\tilde\eta = \frac{\lambda}{4\pi^2}d\theta_2 \wedge d\theta_1$. Then $\tilde\eta$ is closed in $U_3$ and also integrates to $\lambda$ over $F$. Therefore $[\eta] =[\tilde\eta] \in  H^2(U_3\backslash\{p\},\mathcal{A}_{D\backslash\{p\}})$ and there is a one-form $\alpha \in \Omega^1(U_3\backslash\{p\}, \mathcal{A}_{|D\backslash\{p\}|})$ such that $\tilde\eta = \eta + d\alpha$.

Let $k\geq 1$, let $\psi_1$ and $\psi_2$ be positive functions on $f(U_3)$ such that $\psi_1$ is equal to $1$ in neighbourhood of $f(U_1)$ and has support in $f(U_2)$ and $\psi_2$ is equal to $1$ in neighbourhood of $f(U_2)$ and has support in $f(U_3)$. Then consider the form
\[\tilde\omega :=
\begin{cases}
			\frac{\lambda}{4\pi^2}(d\theta_2 \wedge d\theta_1 + d\log r_1 \wedge d\log r_2) & \mbox{ in } U_1,\\
			\tilde\eta +   f^*((1-\psi_1) k\rho \omega_N + \psi_1\frac{\lambda}{4\pi^2}d\log x_1 \wedge d\log x_2)& \mbox{ in } U_2\backslash U_1,\\
			\eta + d ((f^*\psi_2)\alpha) + f^*(k\rho \omega_N)& \mbox{ in } U_3\backslash U_2,\\
			\eta + f^*(k\rho \omega_N)& \mbox{ in } M\backslash U_3.
\end{cases}
\]
Because of our choice of bump functions, this form is smooth. Also, it is clearly closed. Since $k\geq 1$, we have $k\rho \geq \rho \geq \rho_0$ and hence $\tilde\omega$ is symplectic in $M\backslash U_3$ for all possible values of $k$.

On $\overline{U_3\backslash U_2}$ we observe that the form $\eta + d ((f^*\psi)\alpha)$ is fibrewise symplectic. Indeed, its restriction to each fibre it is given by
\[\eta +  (f^*\psi_2) d\alpha = (f^*\psi_2)(\eta + d\alpha) + (1-(f^*\psi_2))\eta = (f^*\psi_2)\tilde\eta + (1-(f^*\psi_2))\eta,\]
hence it is a convex combination of $\eta$ and $\tilde\eta$ and these are both symplectic and determine the same orientation on each fibre. Since $\eta + d ((f^*\psi)\alpha)$ is fibrewise symplectic and $\rho \omega_N$ is symplectic on $N$, the combination $\eta + d ((f^*\psi)\alpha) + f^*(k\rho \omega_N)$ is symplectic on the compact set $\overline{U_3\backslash U_2}$ as long as $k$ is large enough.

On $U_2 \backslash U_1$, the form $\tilde \eta$ is given by $\frac{\lambda}{4\pi^2} d\theta_2 \wedge d\theta_1$, while the summand $f^*((1-\psi_1) k\rho \omega_N + \psi_1d\log x_1 \wedge d\log x_2)$ is a convex combination of two log-symplectic structures on $N$ which determine the same orientation, that is
\[f^*((1-\psi_1) k\rho \omega_N + \psi_1\frac{\lambda}{4\pi^2} d\log x_1 \wedge d\log x_2) = f^*(\kappa d\log x_1 \wedge d\log x_2),\]
for some positive function $\kappa$ and hence on $U_2 \backslash U_1$
\[\tilde\omega= \frac{\lambda}{4\pi^2} d\theta_2 \wedge d\theta_1 + (f^*\kappa) d\log r_1 \wedge d\log r_2,\]
which is clearly (zero residue) elliptic symplectic.
 
Finally, on $U_1$ we have $\omega = \mathrm{Im}(i \frac{\lambda}{4\pi^2}d\log z_1 \wedge d\log z_2)$, showing that it has the desired properties.
\end{proof}
%
\section{Connected sums of boundary Lefschetz fibrations}\label{sec:connsum}
In this section we describe a connected sum procedure for boundary Lefschetz fibrations along zero-dimensional strata of their elliptic divisors. This procedure will allow us to construct elaborate examples out of basic ones. For simplicity, we immediately restrict ourselves to dimension four, but we note that since the connected sum takes place at points of the divisor, this procedure can also be carried out for boundary fibrations in higher dimensions.

Before we start taking connected sums of boundary Lefschetz fibrations, first recall from \cite[Lemma 6.1]{CKW20} that we can take connected sums of elliptic divisors.
\begin{lemma}\label{lem:divglue}
Let $M_1^4,M_2^4$ be oriented manifolds endowed with elliptic divisors $I_{\abs{D_1}},I_{\abs{D_2}}$ respectively, and let $p_i \in D_i[2]$. Then $M_1 \#_{p_1,p_2} M_2$ admits an elliptic divisor $I_{\abs{\tilde{D}}}$ for which the natural inclusions $(M\backslash \set{p_i},I_{\abs{D_i}}) \rightarrow (M_1\#_{p_1,p_2} M_2,I_{\abs{\tilde{D}}})$ are morphisms of divisors.
\end{lemma}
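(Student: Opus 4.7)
The approach is to realise the connected sum in coordinates adapted to the elliptic divisors and to choose the gluing diffeomorphism so that it is simultaneously orientation-reversing and a morphism of divisors.

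I would begin by invoking the local normal form of Remark~\ref{rem:exampledivisors} (together with the orientation conventions of Remark~\ref{rem:there will be order}) to obtain, for each $i = 1,2$, an orientation-preserving chart
\[
\varphi_i \colon U_i \to \set{z \in \cc^2 : \abs{z} < 2}
\]
with $\varphi_i(p_i) = 0$, identifying $\rest{I_{\abs{D_i}}}{U_i}$ with the standard elliptic ideal $\inp{\abs{z_1}^2\abs{z_2}^2}$ and $D_i \cap U_i$ with $\set{z_1 = 0} \cup \set{z_2 = 0}$. Removing the open balls $B^{1/2}_i := \varphi_i^{-1}\set{\abs{z} < 1/2}$ from $M_i$ and identifying the collars $A_i := \varphi_i^{-1}\set{1/2 < \abs{z} < 1}$ via
\[
\Phi := \varphi_2^{-1}\circ g\circ \varphi_1 \colon A_1 \to A_2, \qquad g(z) := \frac{z}{2\abs{z}^2},
\]
then yields the oriented connected sum $M_1 \#_{p_1, p_2} M_2$. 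A short calculation gives $dg_z = (2\abs{z}^2)^{-1}(\id - 2\hat z \hat z^\top)$, with determinant $-(2\abs{z}^2)^{-4} < 0$, so $\Phi$ is orientation-reversing; moreover $g$ sends the closed annulus $\set{1/2 \leq \abs{z} \leq 1}$ bijectively onto itself, swapping the two boundary spheres.

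The essential point is that $g$ also respects the divisor ideal. Since $g(z)_k = z_k/(2\abs{z}^2)$, one computes
\[
\abs{g(z)_1}^2 \abs{g(z)_2}^2 = \frac{\abs{z_1}^2 \abs{z_2}^2}{16\abs{z}^8},
\]
and the factor $1/(16\abs{z}^8)$ is smooth and nowhere vanishing on $A_1$; hence $g^*\inp{\abs{w_1}^2 \abs{w_2}^2} = \inp{\abs{z_1}^2 \abs{z_2}^2}$, so $\Phi^* I_{\abs{D_2}} = I_{\abs{D_1}}$ on the overlap. Consequently the ideals $I_{\abs{D_i}}$ on $M_i \setminus B^{1/2}_i$ patch to a well-defined ideal $I_{\abs{\tilde D}}$ on $M_1 \#_{p_1, p_2} M_2$. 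Each point of the connected sum has a neighbourhood inside one of the $M_i \setminus \set{p_i}$, where $I_{\abs{\tilde D}}$ coincides with $I_{\abs{D_i}}$, so it still factorises locally as a product of smooth elliptic ideals with transversely intersecting vanishing loci (Definition~\ref{def:xelliptic}). By construction the inclusions $M_i \setminus \set{p_i} \hookrightarrow M_1 \#_{p_1, p_2} M_2$ pull $I_{\abs{\tilde D}}$ back to $I_{\abs{D_i}}$, and are therefore morphisms of divisors.

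The delicate step I expect to be the main obstacle is the simultaneous requirement that the gluing be orientation-reversing \emph{and} divisor-preserving. A tempting alternative such as $(z_1, z_2) \mapsto (\bar z_1, z_2)/\abs{z}^2$ still preserves the coordinate hyperplanes, but composes two orientation reversals (single-factor conjugation and the inversion) to produce an orientation-preserving map, incompatible with an oriented connected sum. The plain inversion $g$ avoids this because $z \mapsto z/\abs{z}^2$ is orientation-reversing in real dimension four while automatically preserving every complex coordinate hyperplane through the origin, so both conditions are met at once.
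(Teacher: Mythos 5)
Your divisor-preservation argument and the final patching step are fine, but there is a genuine error in the orientation analysis, and it points you to the wrong gluing map. You correctly compute that $g(z) = z/(2\abs{z}^2)$ has $\det dg < 0$ in $\rr^4$, but you then conclude that this is what an oriented connected sum requires. It is the opposite: with \emph{both} charts $\varphi_1, \varphi_2$ orientation-preserving, the transition function on the overlap of the two atlases in $M_1\#M_2$ is exactly the coordinate expression of $\Phi$, so $\Phi$ must be orientation-\emph{preserving} for the orientations of $M_1$ and $M_2$ to patch. (The familiar radial-reversal convention for connected sums, in which the annulus gluing is orientation-reversing, goes together with one orientation-\emph{reversing} chart; you cannot have both charts orientation-preserving \emph{and} an orientation-reversing gluing.) As written, your construction produces $M_1 \# \overline{M_2}$ rather than $M_1 \#_{p_1,p_2} M_2$; in four dimensions these are genuinely different (compare $\cc P^2 \# \cc P^2$ with $\cc P^2 \# \bar{\cc P}^2$), and the distinction matters for the paper, which uses this lemma precisely to track the parity/intersection index of $\tilde D$.

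Ironically, the ``tempting alternative'' you dismissed, $(z_1,z_2) \mapsto (\bar z_1, z_2)/\abs{z}^2$, is in fact orientation-preserving, preserves $\inp{\abs{z_1}^2\abs{z_2}^2}$, and swaps inner and outer spheres, so it \emph{is} a valid gluing map. The paper instead uses $\Phi(z_1,z_2) = (z_2,\bar z_1)/(\abs{z_1}^2+\abs{z_2}^2)$, which is also orientation-preserving and divisor-preserving and in addition interchanges the two coordinate hyperplanes; this additional permutation of the two strands of $D$ through $p_i$ is what is compatible with the ordering of strands fixed by an orientation of the base (Remark~\ref{rem:there will be order}) and is used later when computing the topology and parity of $\tilde D$. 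So the fix is simple (flip the sign of your orientation requirement and use either of the two orientation-preserving maps above), but as stated the proof constructs the divisor on the wrong manifold.
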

Similarly we can perform a self-connected sum, which when $M$ is connected corresponds to attaching a 1-handle and hence the diffeomorphism type of the resulting space is $M \#(S^1\times S^3)$.
\begin{lemma}
Let $M^4$ be an oriented connected manifold endowed with an elliptic divisor $I_{\abs{D}}$, and let $p_1,p_2 \in D[2]$ be distinct points. Then $M \#(S^1\times S^3)$ and admits an elliptic divisor $I_{\abs{\tilde{D}}}$ for which the natural inclusion $(M\backslash \set{p_1,p_2},I_{\abs{D}}) \rightarrow (M\#(S^1 \times S^3),I_{\abs{\tilde{D}}})$ is a morphism of divisors.
\end{lemma}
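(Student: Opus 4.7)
The plan is to mirror the proof of Lemma~\ref{lem:divglue} in a self-gluing setting, exploiting that the connected-sum construction of elliptic divisors is entirely local. Since $p_1$ and $p_2$ are distinct, I can choose disjoint oriented coordinate charts $\psi_i\colon U_i \to \cc^2$ centred at $p_i$ in which $I_{\abs{D}}|_{U_i} = \inp{\abs{z_1}^2\abs{z_2}^2}$, using the local normal form from Remark~\ref{rem:exampledivisors}. Next I would excise small open balls around the origin in each chart and identify the two resulting boundary three-spheres by an orientation-reversing diffeomorphism $\Phi$ which, in these charts, has the form $(z_1,z_2)\mapsto (z_1,\bar z_2)$, or any other orientation-reversing involution of $S^3$ which preserves the Hopf-link coordinate axes. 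Because such a $\Phi$ pulls the standard elliptic ideal back to itself, the local elliptic divisors on the two sides of the cut agree across the glued neck and assemble into a global elliptic divisor $I_{\abs{\tilde D}}$ on the quotient.

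To finish one has to identify the resulting manifold with $M \#(S^1\times S^3)$ and verify the morphism-of-divisors statement. The first assertion is the topological fact, recalled in the discussion preceding the lemma, that removing two open four-balls from a connected oriented four-manifold and glueing the boundaries by an orientation-reversing diffeomorphism is exactly the attachment of a $1$-handle, and hence produces $M\#(S^1\times S^3)$. Connectedness of $M$ is used precisely at this point. The morphism-of-divisors statement is automatic because the construction leaves $M\backslash \set{p_1,p_2}$ untouched outside of the two small balls, and on their complements $I_{\abs{\tilde D}}$ restricts to $I_{\abs{D}}$ by design.

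The main obstacle is arranging the gluing map $\Phi$ to be simultaneously orientation-reversing in the given oriented charts, divisor-preserving, and smoothly extendable to a collar of the glueing locus so that the quotient acquires a genuine smooth structure and genuine elliptic divisor. All three conditions are readily satisfied by $(z_1,z_2)\mapsto (z_1,\bar z_2)$ composed with a radial reparametrisation of the annular neck, so the verification reduces to a computation identical to the one carried out in the proof of Lemma~\ref{lem:divglue}. The only genuinely new input relative to that lemma is the global topological identification of the result as $M\#(S^1\times S^3)$, which is independent of the divisor structure.
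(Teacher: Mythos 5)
Your proposal is correct and essentially matches the paper's (unspelled-out) argument: the paper simply refers to the annulus-gluing map $\Phi(z_1,z_2)=\frac{1}{|z_1|^2+|z_2|^2}(z_2,\bar z_1)$ introduced after the statement, and the lemma is implicitly proved just as you describe, by exhibiting a gluing of oriented local charts that preserves the standard elliptic ideal $\langle |z_1|^2|z_2|^2\rangle$ and observing that a self-connected sum of a connected manifold is $M\#(S^1\times S^3)$. Your chosen gluing $(z_1,z_2)\mapsto(z_1,\bar z_2)$ composed with inversion differs from the paper's $\Phi$ only by the coordinate transposition; the paper explicitly allows this freedom, noting that pre-composing with a permutation may change the topology of $\tilde D$ (whether the two strands through $p_1$ and $p_2$ are swapped or not) but still produces a valid elliptic divisor, which is all this lemma asserts.
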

In this connected sum procedure the map $\Phi(z_1,z_2) = \frac{1}{\abs{z_1}^2+\abs{z_2}^2}(z_2,\bar{z}_1)$ is used to identify annuli. Here $(z_1,z_2)$ are local complex coordinates compatible with the orientation on the manifolds. There is some freedom in the constructions above. Given a choice of local coordinates $(z_1,z_2)$ around $p_1$ and $p_2$, we can compose the map $\Phi$ by a permutation of the coordinates. This does not change the topology of $M_1 \#_{p_1,p_2} M_2$ but it could change the topology of the zero locus of the divisor. In dimension four, because of this freedom in the ordering, there are potentially two different topological types of the zero locus of the divisor. Notice however that our notation does not reflect this ambiguity.
\begin{remark}[Connected components]\label{rem:concompcon}
Although there is some freedom in the choices we can still distinguish the number of connected components on the divisor on the connected sum:
\begin{enumerate}
\item When $p_1$ and $p_2$ lie in different connected components of the divisor, be that either in the connected sum of two manifolds or in a self-connected sum, the connected components containing $p_1$ and $p_2$ will combine into a single connected component of $\tilde{D}$.
\item When $p_1$ and $p_2$ are in the same connected component, $D$, a case that can only happen in a self-connected sum, the resulting divisor, $\tilde{D} \subset M\#(S^1\times S^3)$, may have one or two connected components originating from $D$.\qedhere
\end{enumerate}
\end{remark}
Next we show that the connected sum operation is also compatible with boundary (Lefschetz) fibrations. To describe how the connected sum procedure interacts with the base of the fibration we first consider what happens in the local model:
\begin{lemma}\label{lem:locfibglue} Let $\Delta_r \subseteq \rr^2$ be the triangle bounded by the axes and the line $x + y = r$, and let $(x,y)$ be oriented coordinates on $\Delta_r$ and $(z_1,z_2)$ be complex coordinates on $\D^4_r$, the disc of radius $r$. Consider the following maps:
\begin{itemize}
	\item $p\colon (\D^{4}_2\backslash \D^4_{1/2}) \to (\Delta_2\backslash \Delta_{1/2})$, given by $(z_1,z_2) \mapsto (\abs{z_1}^2,\abs{z_2}^2)$;
	\item $\Phi\colon (\D^{4}_2\backslash \D^4_{1/2}) \rightarrow (\D^{4}_2\backslash \D^4_{1/2})$, given by $(z_1,z_2) \mapsto \frac{1}{\abs{z_1}^2+\abs{z_2}^2}(z_2,\bar{z}_1)$;
	\item $\Psi\colon (\Delta_2\backslash \Delta_{1/2}) \rightarrow (\Delta_2\backslash \Delta_{1/2})$ given by $(x,y) \mapsto \frac{(y,x)}{(x+y)^2}$.
\end{itemize}
Then the following diagram commutes:
\begin{center}
\begin{tikzcd}
(\D^{4}_2\backslash \D^4_{1/2}) \ar[r,"\Phi"] \ar[d,"p"] & (\D^{4}_2\backslash \D^4_{1/2}) \ar[d,"p"] \\
(\Delta_2\backslash \Delta_{1/2}) \ar[r,"\Psi"] & (\Delta_2\backslash \Delta_{1/2}).
\end{tikzcd}
\end{center}
\end{lemma}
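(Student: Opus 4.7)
The plan is to verify the commutativity of the diagram by direct computation in coordinates. The key underlying observation is that both $\Phi$ and $\Psi$ act on the ``radial'' quantity ($|z_1|^2+|z_2|^2$ upstairs, $x+y$ downstairs) by inversion $r \mapsto 1/r$, combined with a swap of the two coordinates; since $p$ itself sends $|z_i|^2$ to $x_i$, this will make the commutativity transparent and simultaneously show that $\Phi$ and $\Psi$ preserve their annular domains (as $r \mapsto 1/r$ preserves the interval $[1/2,2]$).

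Concretely, I would first verify that $\Phi$ and $\Psi$ are well-defined self-maps of the stated annuli. For $\Phi$, a short computation yields $|\Phi(z_1,z_2)|^2 = (|z_1|^2+|z_2|^2)^{-1}$, so $\Phi$ sends $\D^4_2 \setminus \D^4_{1/2}$ to itself. For $\Psi$, the two coordinates of $\Psi(x,y)$ remain non-negative and their sum equals $(x+y)^{-1}$, so $\Psi$ sends $\Delta_2 \setminus \Delta_{1/2}$ to itself.

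Then I would evaluate both compositions on a general point. Going $(z_1,z_2)$ first through $\Phi$ and then through $p$ gives
\[
p\bigl(\Phi(z_1,z_2)\bigr) \;=\; \frac{1}{(|z_1|^2+|z_2|^2)^2}\bigl(|z_2|^2,\, |z_1|^2\bigr),
\]
while going first through $p$ and then through $\Psi$ yields
\[
\Psi\bigl(p(z_1,z_2)\bigr) \;=\; \Psi\bigl(|z_1|^2,|z_2|^2\bigr) \;=\; \frac{1}{(|z_1|^2+|z_2|^2)^2}\bigl(|z_2|^2,\, |z_1|^2\bigr)
\]
directly from the definition of $\Psi$. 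Comparing the two expressions proves the lemma.

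I do not anticipate any genuine obstacle; the content of the lemma lies not in any difficulty but in recording that the identification $\Phi$ used for the connected-sum construction on the total space (as in Lemma~\ref{lem:divglue}) is fibred over a corresponding identification $\Psi$ on the base. This compatibility is exactly what will allow, in the remainder of Section~\ref{sec:connsum}, the extension of the divisor connected sum to a connected sum of boundary Lefschetz fibrations.
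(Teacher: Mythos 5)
Your verification is correct and is exactly the ``simple verification'' the paper alludes to without writing out: compute $p\circ\Phi$ and $\Psi\circ p$ and observe they agree, with the observation that both act as $r\mapsto 1/r$ on the radial coordinate explaining why the annular domains are preserved. Nothing to add.
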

The proof of this lemma is a simple verification. Just as we used the map $\Phi$ to perform a connected sum compatible with elliptic divisors, we want to use the map $\Psi$ to define a sort of connected sum operation of the base:
\begin{definition}
Let $\Sigma_1,\Sigma_2$ be oriented surfaces with corners, and let $q_1,q_2$ be corners of $\Sigma_1,\Sigma_2$ respectively. The \textbf{oriented corner connected sum} of $\Sigma_1$ and $\Sigma_2$ is defined by identifying a trapezoid neighbourhood of $q_1$ to a trapezoid neighbourhood of $q_2$ via $\Psi$. The oriented corner connected sum is an oriented surface with corners denoted by $\Sigma_1 \#_{q_1,q_2} \Sigma_2$ (see Figure~\ref{fig:corner sum}).
\end{definition}
\begin{figure}[h]
\begin{overpic}[unit=1mm,scale=0.4]{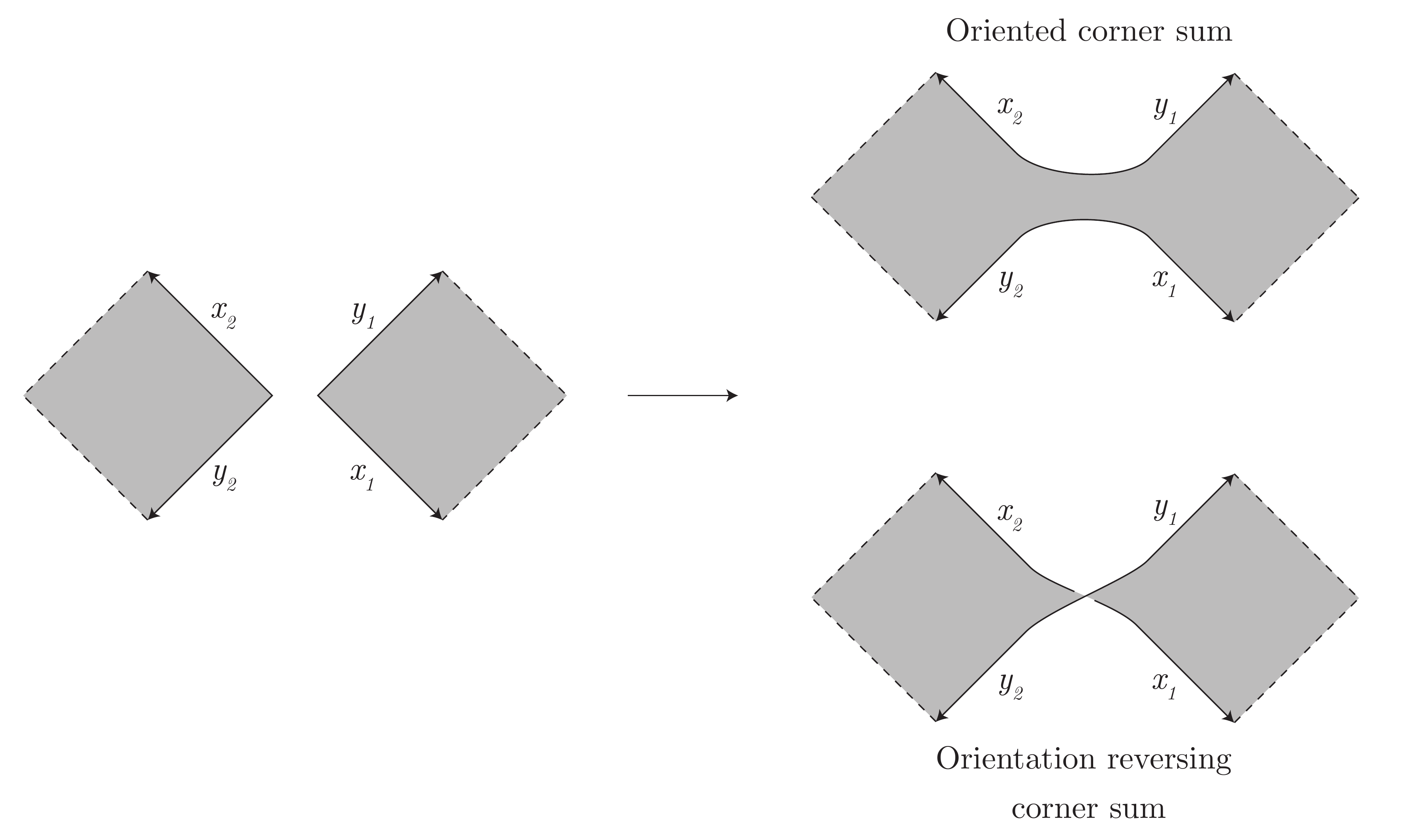}
\end{overpic}
\caption{Local oriented and orientation-reversing corner connected sums.}\label{fig:corner sum}
\end{figure}
The oriented corner connected sum is naturally oriented and does not depend on the neighbourhoods chosen. Together with the local normal form for fibrating boundary maps, we can now prove the following:
\begin{theorem}\label{thm:glue}
Let $f_i\colon(M_i^4,D_i) \rightarrow (N_i^2,\partial N_i)$ be boundary (Lefschetz) fibrations with connected fibres between oriented manifolds for $i=1,2$, let $p_i \in D_i[2]$ and $q_i = f(p_i)$. Then there exists a boundary (Lefschetz) fibration on one of the two possible connected sums $M_1 \#_{p_1,p_2} M_2$ whose base is the oriented corner sum $N_1 \#_{q_1,q_2}N_2$:
\begin{equation*}
	(f_1 \# f_2)\colon (M_1 \#_{p_1,p_2} M_2,\tilde{D}) \rightarrow (N_1 \#_{f(p_1),f(p_2)} N_2,\partial N_1\#_{q_1,q_2} \partial N_2), 
\end{equation*}
which is compatible with the (orientation-preserving) inclusions $M_i \backslash \set{p_i} \hookrightarrow M_1\#M_2$. 

Furthermore let $D_1',D_2'$ denote the connected components of the zero locus of the divisor $D$ containing $p_1,p_2$ respectively. Then the parities satisfy:
\begin{align*}
	\varepsilon_{\tilde{D}} = -\varepsilon_{D_1'}\varepsilon_{D_2'}.
\end{align*}
Finally $f_1 \# f_2$ is homologically essential if and only if $f_1$ and $f_2$ are.
\end{theorem}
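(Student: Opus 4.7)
\bigskip
\noindent\textbf{Proof proposal.}

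The plan is to produce $f_1 \# f_2$ by gluing $f_1$ and $f_2$ directly in coordinates, using the compatibility between the total-space gluing map $\Phi$ and the base gluing map $\Psi$ recorded in Lemma~\ref{lem:locfibglue}. First I would invoke Lemma~\ref{lem:boundmaplocform} (together with Remark~\ref{rem:there will be order}) to pick oriented coordinate charts around each $p_i \in D_i[2]$ and $q_i = f_i(p_i)$ in which $f_i$ acquires the standard form $(z_1,z_2) \mapsto (|z_1|^2,|z_2|^2)$. Shrinking these charts, I can identify neighbourhoods of $p_i$ with $\D^4_2$ and of $q_i$ with the triangle $\Delta_2$. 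Removing the smaller discs/triangles and identifying the resulting annular shells via $\Phi$ on the total spaces (as in Lemma~\ref{lem:divglue}) and via $\Psi$ on the bases then defines $M_1 \#_{p_1,p_2} M_2$ and $N_1 \#_{q_1,q_2} N_2$. Lemma~\ref{lem:locfibglue} guarantees that $f_1$ and $f_2$ descend to a well-defined smooth map $f_1 \# f_2$ on the connected sum, automatically compatible with the inclusions $M_i \setminus \{p_i\} \hookrightarrow M_1 \# M_2$. The freedom in the choice of ordering of the coordinates (Remark~\ref{rem:there will be order}) corresponds precisely to the ``two possible connected sums'' referred to in the statement.

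Next I would verify that $f_1 \# f_2$ is a boundary (Lefschetz) fibration. Outside the neck this is inherited from $f_1$ and $f_2$, while inside the neck the local form is exactly the standard model $(z_1,z_2) \mapsto (|z_1|^2,|z_2|^2)$, which satisfies all the conditions of Lemma~\ref{lem:boundmaplocform}. Since the gluing is performed in neighbourhoods of points of $D_i[2]$, no new Lefschetz critical points are created away from the divisor, so the Lefschetz condition on $M \setminus D$ is preserved. The induced divisor on the connected sum is the one produced by Lemma~\ref{lem:divglue}, and $f_1 \# f_2$ pulls the base log ideal back to this elliptic ideal by construction.

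For the parity statement, let $\tilde D'$ denote the connected component of $\tilde D$ produced from $D_1'$ and $D_2'$ under the gluing (cf.\ Remark~\ref{rem:concompcon}). By construction,
\[
\tilde D'[2] = \bigl(D_1'[2] \setminus \{p_1\}\bigr) \cup \bigl(D_2'[2] \setminus \{p_2\}\bigr),
\]
so that $\varepsilon_{\tilde D'} = \varepsilon_{D_1'} \, \varepsilon_{D_2'} \, (\epsilon_{p_1}\epsilon_{p_2})^{-1}$. The remaining task is the local sign computation $\epsilon_{p_1}\epsilon_{p_2} = -1$. This I would extract by writing $\Phi$ in the standard coordinates around $p_1$ and $p_2$: because $\Phi$ is an orientation-reversing diffeomorphism between the shells (as dictated by the oriented connected sum convention) but must send the two local branches of $D_1$ near $p_1$ to the two branches of $D_2$ near $p_2$ while matching the co-orientation of each branch, the local orientations of $N_{p_1}D_1^{(1)} \oplus N_{p_1}D_1^{(2)}$ and $N_{p_2}D_2^{(1)} \oplus N_{p_2}D_2^{(2)}$ versus $T_{p_1}M_1$ and $T_{p_2}M_2$ must have opposite relative signs. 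This is precisely the statement $\epsilon_{p_1}\epsilon_{p_2} = -1$, from which the parity formula follows.

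Finally, for homological essentiality, I observe that a regular fibre $F$ of $f_1 \# f_2$ lying in (the image of) $M_1 \setminus \{p_1\}$ is a regular fibre of $f_1$; and similarly on the other side. Hence if $f_1$ is homologically essential with cohomology class $c_1 \in H^2(M_1 \setminus D_1;\rr)$ such that $\langle c_1, [F]\rangle \neq 0$, the restriction of any extension of $c_1$ to $(M_1 \# M_2) \setminus \tilde D$ (constructed using Mayer--Vietoris applied to the neck decomposition, noting that the neck $S^3 \setminus (\text{link of } D)$ has vanishing second cohomology pairing with $[F]$) pairs nontrivially with $[F]$. The converse is immediate since any fibre on one side of the neck is visibly a fibre of the corresponding $f_i$. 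I expect the main obstacle to be the careful book-keeping of orientations needed to establish $\epsilon_{p_1}\epsilon_{p_2} = -1$; once this is in place the rest of the argument is largely a gluing and normal-form exercise.
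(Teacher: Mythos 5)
Your gluing construction matches the paper's proof exactly: both invoke Lemma~\ref{lem:boundmaplocform} (with Remark~\ref{rem:there will be order}) to produce standard oriented coordinates around $p_i$ and $q_i$, then use Lemma~\ref{lem:locfibglue} to observe that the total-space gluing map $\Phi$ and base gluing map $\Psi$ commute with the fibrations, so $f_1$ and $f_2$ descend to a boundary (Lefschetz) fibration on the connected sum. Your Mayer--Vietoris argument for homological essentiality is a reasonable addition (the paper's proof is silent on this claim).

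The one place where you diverge from the paper --- and where your reasoning goes astray --- is the parity computation, which the paper simply attributes to \cite[Theorem 6.7]{CKW20}. Your reduction $\varepsilon_{\tilde D'} = \varepsilon_{D_1'}\varepsilon_{D_2'}(\epsilon_{p_1}\epsilon_{p_2})^{-1}$ and the target $\epsilon_{p_1}\epsilon_{p_2} = -1$ are both correct, but the justification you offer for the latter rests on two false premises that happen to cancel. First, the shell-to-shell map $\Phi(z_1,z_2) = \tfrac{1}{|z_1|^2+|z_2|^2}(z_2,\bar z_1)$ is \emph{orientation-preserving}, not orientation-reversing: the inversion on $\rr^4$ reverses orientation, the swap-and-conjugate $(z_1,z_2)\mapsto (z_2,\bar z_1)$ also reverses orientation, and the composite preserves it. (Indeed it must, since $\Phi$ identifies shells whose orientations are both induced from the global orientation of $M_1\# M_2$.) Second, $\Phi$ does \emph{not} match the natural co-orientations of both branches: a direct computation at, say, $(0,1)$ shows $\Phi_*\partial_{y_1} = -\partial_{v_2}$ because of the conjugation, so exactly one branch has its co-orientation reversed. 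With both premises corrected --- $\Phi_*\colon T_{p_1}M_1 \to T_{p_2}M_2$ orientation-preserving and $\Phi_*\colon N_{p_1}D_1^{(1)}\oplus N_{p_1}D_1^{(2)} \to N_{p_2}D_2^{(1)}\oplus N_{p_2}D_2^{(2)}$ orientation-reversing --- one still obtains $\epsilon_{p_1} = -\epsilon_{p_2}$, so your final formula holds, but the intermediate argument as written would not survive scrutiny.
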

\begin{proof}
By Lemma~\ref{lem:boundmaplocform} there exists neighbourhoods $U_1$, $U_2$ of $f(p_1)$, $f(p_2)$ respectively which provide coordinates as in the setting of Lemma~\ref{lem:locfibglue}. We perform the connected sum procedure using the maps described there. Because these maps are compatible with the fibrations on $M_1$ and $M_2$ we conclude that $M_1 \#_{p_1,p_2} M_2$ admits a boundary fibration. The computation of the parity is given in \cite[Theorem 6.7]{CKW20}.
\end{proof}
Recall that there are a priori two possible topological types for the elliptic divisor, depending on the ordering of the local coordinates. However, when we are presented with fibrations between oriented manifolds $f\colon (M_i,D_i) \to (N_i,\partial N_i)$ the orientation on the base determines an order for the strands of $D$ for every point $p_i\in D_i[2]$ (c.f.\ Remark~\ref{rem:there will be order}). The gluing of fibrations which is compatible with orientations on $M_i$ and $N_i$ is the one that flips the first and second strands arriving the points where the sum is performed. In particular, from the possible divisors discussed in Remark~\ref{rem:concompcon}, (2), only the one with two connected components occurs.
\begin{remark}[Non-orientable case]
If we were to allow the map $\Psi$ used in the corner sum to be orientation reversing, we would still be able to define a corner connected sum and obtain a boundary fibration. When taking the connected sum of two manifolds this does not cause a qualitative change in the outcome. However, if we use the orientation-reversing corner sum on the base for a self-connected sum, we see that the resulting base manifold is not orientable as a M\"obius band appears.
\end{remark}
Now that we understand precisely what happens to the connected components of the divisor on the self-connected sum, we can state the following:
\begin{corollary}\label{cor:selfglue}
Let $f\colon (M^4,D^2)\rightarrow (N^2,\partial N)$ be a boundary (Lefschetz) fibration with connected fibres between oriented manifolds, and let $p_1, p_2 \in D[2]$ be distinct. Then $M \# (S^1\times S^3)$ admits a boundary (Lefschetz) fibration $\tilde{f}$ which is compatible with the inclusion $M\backslash \set{p_1,p_2} \hookrightarrow M \#_{p_1,p_2} (S^1\times S^3)$, and for which $\tilde{D}[2] = D[2] \backslash \set{p_1,p_2}$.

Moreover let $D'_{p_1},D'_{p_2}$ denote the connected components of $D$ containing $p_1,p_2$ respectively.
\begin{itemize}
\item If $p_i \in D'_{p_i}[2]$  and $D'_{p_1} \neq D'_{p_2}$, then the corresponding connected component $\tilde{D}'$ of $\tilde{D}$ satisfies:
\begin{align*}
	\varepsilon_{\tilde{D}'} = -\varepsilon_{D_1'}\varepsilon_{D_2'};
\end{align*}
\item If $p_i \in D'_{p_i}[2]$ and $D'_{p_1} = D'_{p_2}$, then the corresponding connected components $\tilde{D}'_1,\tilde{D}'_2$ of $\tilde{D}$ satisfy:
\begin{align*}
\varepsilon_{\tilde{D}_1'}\varepsilon_{\tilde{D}_2'} = - \varepsilon_{D'_{p_1}} \varepsilon_{D'_{p_2}}.
\end{align*}
\end{itemize}
Finally $\tilde{f}$ is homologically essential if and only if $f$ is.
\end{corollary}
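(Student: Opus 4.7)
The strategy mirrors the proof of Theorem~\ref{thm:glue}, carried out within a single manifold. First, I would apply Lemma~\ref{lem:boundmaplocform} at $p_1$ and $p_2$ to obtain disjoint oriented coordinate charts centred at these points in which $f$ takes the standard form $(z_1,z_2)\mapsto(|z_1|^2,|z_2|^2)$, together with compatible charts around $q_1=f(p_1)$ and $q_2=f(p_2)$. I would then use the maps $\Phi$ and $\Psi$ of Lemma~\ref{lem:locfibglue} to identify annular neighbourhoods of $p_1,p_2$ in $M$ and the corresponding trapezoidal neighbourhoods of $q_1,q_2$ in $N$. Since $M$ is connected, this $1$-handle attachment produces $M\#(S^1\times S^3)$, and commutativity of the diagram in Lemma~\ref{lem:locfibglue} guarantees that $f$ descends to a boundary (Lefschetz) fibration $\tilde{f}$ on the quotient, compatible with the inclusion $M\backslash\{p_1,p_2\}\hookrightarrow M\#(S^1\times S^3)$. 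The equality $\tilde{D}[2]=D[2]\backslash\{p_1,p_2\}$ is immediate from this description, since $\Phi$ exchanges coordinate axes without creating new transverse intersections inside the gluing region, and connectedness of the fibres then follows as in Lemma~\ref{lem:confibresoverD}.

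The parity statements reduce to a purely local calculation at $p_1$ and $p_2$, essentially identical to the one performed for Theorem~\ref{thm:glue} and in \cite[Theorem 6.7]{CKW20}. In the case $D'_{p_1}\neq D'_{p_2}$, the two components merge into a single $\tilde{D}'$, and since the intersection indices at points away from $p_1,p_2$ are unchanged, the local analysis of the orientation-preserving identification contributes the extra $-1$ factor, giving $\varepsilon_{\tilde{D}'}=-\varepsilon_{D'_{p_1}}\varepsilon_{D'_{p_2}}$. In the case $D'_{p_1}=D'_{p_2}$, the orientation-compatible gluing, which must flip the first and second strands arriving at $p_1$ and $p_2$ (cf.\ Remark~\ref{rem:there will be order}), forces the component $D'_{p_1}$ to split into two pieces $\tilde{D}'_1,\tilde{D}'_2$, resolving the ambiguity of Remark~\ref{rem:concompcon}(2) in favour of the disconnected option; the same local sign tracking then yields $\varepsilon_{\tilde{D}'_1}\varepsilon_{\tilde{D}'_2}=-\varepsilon_{D'_{p_1}}\varepsilon_{D'_{p_2}}$.

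For homological essentiality, a generic fibre $F$ of $f$ can be chosen in the complement of the gluing neighbourhoods of $p_1,p_2$, in which case it simultaneously represents a generic fibre of $\tilde{f}$. Under the natural inclusion $j\colon M\backslash D\hookrightarrow\tilde{M}\backslash\tilde{D}$ (well-defined because $\{p_1,p_2\}\subset D$), we have $j_*[F]=[\tilde{F}]$, and pulling back a witnessing cohomology class gives the implication ``$\tilde{f}$ essential $\Rightarrow f$ essential''. For the converse, a Mayer--Vietoris argument applied to the decomposition of $\tilde{M}\backslash\tilde{D}$ into $M\backslash D$ and a tubular collar of the gluing $1$-handle, whose overlap is homotopy equivalent to $T^2\times I$ (the link of the corner with its axes removed), allows a closed $2$-form on $M\backslash D$ pairing nontrivially with $[F]$ to be extended across the collar without obstruction. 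I expect the main subtlety to be the bookkeeping of signs and connected components in the same-component parity case, where the orientation-preserving convention must be invoked to single out the correct topological type among the possibilities allowed by Remark~\ref{rem:concompcon}.
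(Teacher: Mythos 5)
Your proposal follows the same route as the paper: the paper does not write out a proof of this corollary at all, since it is presented as a direct consequence of the gluing construction in the proof of Theorem~\ref{thm:glue} (applied to two points of the same manifold) together with the discussion of strand-ordering in Remark~\ref{rem:there will be order} and the paragraph after Theorem~\ref{thm:glue}, which pins down the number of components, and the parity computation cited from \cite[Theorem 6.7]{CKW20}. You have correctly identified all of these ingredients and filled in the homological-essentiality argument, which the paper leaves implicit; your account is consistent with what the paper intends.
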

%
\section{Singularity trades}\label{sec:singtrades}
The goal of this section is to prove two theorems which allow one to trade Lefschetz for elliptic--elliptic singularities and vice-versa. To formulate these results, we need to recall the notion of vanishing cycle for both Lefschetz and elliptic singularities.

Given a boundary Lefschetz fibration $f \colon (M^4,D^2) \to (N^2,\partial N)$ and an elliptic or a Lefschetz singularity $p_1 \in M$, let $q_1  = f(p_1)$ be the corresponding singular value. We fix $q\in N$, a reference regular point of $f$ and $\gamma\colon [0,1] \to N$, a simple path connecting $q$ to $q_1$ which goes through no critical values of $f$ except for $q_1$ at time 1. We can consider $F_{q} = f^{-1}(q)$, $F_{\gamma} = f^{-1}(\gamma([0,1]))$ and the natural inclusion $\iota\colon F_{q} \to F_\gamma$. Then $F_{q}$ is a two-torus and $H_1(F_\gamma)$ is one-dimensional:
\begin{itemize}
\item In the case of a Lefschetz singularity the inclusion $H_1(F_{q}) \to H_1(F_\gamma)$ has kernel given by the Lefschetz vanishing cycle, which corresponds to the boundary of a Lefschetz thimble emanating from the singularity.
\item In the case of an elliptic singularity $F_\gamma$ is the product of  circle and a solid torus with $F_{q}$ as boundary, hence $\iota_* \colon H_1(F_{q}) \to H_1(F_\gamma)$ also has one-dimensional kernel given by the cycle in $F_{q}$ which becomes a boundary in $F_{\gamma}$.
\end{itemize}
In both cases the kernel of $\iota_*$ is generated by one primitive element in $H_1(F_{q};\mathbb{Z})$ which depends only on the homotopy class of $\gamma$ in $N\backslash\text{Crit}(f)$.
\begin{definition}
In the situation above, the \textbf{vanishing cycle} associated to the singular value $q_1$ and the homotopy class of the path $\gamma$ is either of the primitive elements in $H_1(F_{q};\zz)$ which generates the kernel of $H_1(F_{q};\zz) \to H_1(F_\gamma;\zz)$.
\end{definition}
\begin{definition}
Let  $f\colon (M^4,D^2) \to (N^2,\partial N)$ be a boundary Lefschetz fibration, let  $F_{q_0}$ and $F_{q_1}$  be Lefschetz or elliptic fibres. We say that the vanishing cycles at  $F_{q_0}$ and $F_{q_1}$ are a \textbf{dual pair} if
there is a simple path $\gamma\colon [0,1] \to N$ such that:
\begin{itemize}
\item $\gamma(0) = q_0$ and $\gamma(1)= q_1$,
\item $\gamma((0,1))$ only contains regular values of $f$,
\item the vanishing cycles on both ends of $\gamma$ together generate the integral homology of the regular torus fibre, say $F_{\gamma(1/2)}$.\qedhere
\end{itemize}
\end{definition}
With these notions at hand, we can give the precise statements of our singularity trade theorems.
\begin{theorem}[Elliptic--elliptic trade]\label{th:smoothing}
Let $f\colon (M^4,D) \rightarrow (N^2,\partial N)$ be a boundary Lefschetz fibration with connected fibres, and let $p \in D[2]$. Then $M$ admits a boundary Lefschetz fibration $\tilde f\colon (M^4,\tilde D) \rightarrow (\tilde N^2,\partial \tilde N )$ such that:
\begin{itemize}
\item $\tilde N$ is obtained from $N$ by smoothing out the corner $f(p)$,
\item $\tilde f$ and $\tilde{D}$ agree with $f$ and $D$ outside a small ball centered at $p$,
\item $\tilde D[2] = D[2] \backslash \{p\}$, i.e.~$\tilde f$ has one elliptic--elliptic singularity less than $f$,
\item $\tilde{D}$ and $D$ have the same parity,
\item $\tilde f$ has one Lefschetz singularity more than $f$,
\item $\tilde f$ has an elliptic singularity whose vanishing cycle forms a dual pair with the new Lefschetz vanishing cycle,
\item $\tilde{f}$ is homologically essential if and only if $f$ is.
\end{itemize}
By induction, any manifold which admits a boundary Lefschetz fibration admits one with a smooth embedded divisor.
\end{theorem}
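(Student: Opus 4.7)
The plan is to exhibit a specific ``model'' boundary Lefschetz fibration on $S^4$ that realises the desired trade locally, and then use the connected sum procedure of Theorem~\ref{thm:glue} to graft it onto $(M,D)$ at the given elliptic--elliptic point $p$. The Euler characteristic of the total space of a BLF over a compact base decomposes as $\chi(M) = \#\{\text{corners of base}\} + \#\{\text{Lefschetz singularities}\}$, so a BLF on $S^4$ can have corners-plus-Lefschetz equal to $2$. The trade model corresponds to the partition $1+1$.

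First I would construct a boundary Lefschetz fibration $g\colon (S^4,D_0) \to (B_0,\partial B_0)$ where $B_0$ is a disk with exactly one corner (a ``raindrop''), with the following features: $g$ has one elliptic--elliptic singularity $p_0 \in D_0[2]$ at the corner of $B_0$, one Lefschetz singularity in the interior of $B_0$, and at least one elliptic singular value on the smooth part of $\partial B_0$ whose vanishing cycle together with the Lefschetz vanishing cycle generates $H_1$ of the regular torus fibre (that is, they form a dual pair). I would also arrange that the divisor $D_0$ has parity $\varepsilon_{D_0}=-1$ and that $g$ is homologically essential. The explicit construction starts from the ``semi-toric'' BLF on $S^4 \subset \cc^2 \times \rr$ given by $(z_1,z_2,t)\mapsto (|z_1|^2, |z_2|^2)$, whose base is a triangular region with two corners and no Lefschetz critical points; then one modifies $h$ in a neighbourhood of one of its two corners by inserting the local model of a Lefschetz--elliptic interaction, and verifies via a handle-decomposition argument that the total space is still $S^4$.

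Once the model $g$ is in hand, I apply Theorem~\ref{thm:glue} to $f$ and $g$ at the corners $p \in D[2]$ and $p_0 \in D_0[2]$. Since connected sum with $S^4$ is topologically trivial, $M\#_{p,p_0} S^4 \cong M$, and the resulting map $\tilde f := f\#g$ is a boundary Lefschetz fibration on $M$ whose base $\tilde N = N\#_{f(p),g(p_0)} B_0$ has the corner at $f(p)$ smoothed out (the unique corner of $B_0$ and the corner $f(p)$ of $N$ are removed by the corner connected sum, and the smooth boundary of $B_0$ closes up the opening). By construction $\tilde f$ agrees with $f$ outside a small ball around $p$, and $\tilde D[2] = D[2] \backslash \{p\}$ because both corners used in the sum disappear. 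The new Lefschetz critical point and the dual elliptic vanishing cycle are inherited directly from $g$, the preservation of homological essentialness is the last clause of Theorem~\ref{thm:glue}, and the parity statement follows from the parity formula in the same theorem: $\varepsilon_{\tilde D'} = -\varepsilon_{D'}\varepsilon_{D_0} = \varepsilon_{D'}$ using $\varepsilon_{D_0}=-1$. The final ``by induction'' claim is then immediate: each trade reduces $\#D[2]$ by one while preserving the remaining hypotheses, so after finitely many applications the divisor becomes smooth and embedded.

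The main obstacle is the first step: producing the explicit model $g$ on $S^4$ and checking all of its properties simultaneously (the diffeomorphism type of the total space, the parity $\varepsilon_{D_0}=-1$, homological essentialness, and the dual-pair condition on the Lefschetz and elliptic vanishing cycles). Every other step reduces to an application of Theorem~\ref{thm:glue} together with bookkeeping of the singularities and vanishing cycles that the model contributes.
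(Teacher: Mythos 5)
Your proposal matches the paper's proof of Theorem~\ref{th:smoothing}: both reduce the trade to a connected sum (Theorem~\ref{thm:glue}) with a model boundary Lefschetz fibration on $S^4$ over a disk with one corner, having a single elliptic--elliptic point of intersection index $-1$, one Lefschetz singularity, and a dual pair of Lefschetz/elliptic vanishing cycles; the bookkeeping you carry out (parity via $\varepsilon_{\tilde D'}=-\varepsilon_{D'}\varepsilon_{D_0}$, the reduction $\tilde D[2]=D[2]\setminus\{p\}$, smoothing of the corner, preservation of homological essentiality, and the inductive conclusion) is exactly as in the paper. You correctly flag constructing the $S^4$ model as the hard part; for the record the paper builds it in Lemma~\ref{lem:fibons4} by self-plumbing the disc bundle of $\mathcal{O}(2)\to\cc P^1$ and verifying $S^4$ by Kirby calculus, rather than by modifying the toric $S^4$ fibration near a corner as you sketch---a distinction that matters, since the remark following that lemma warns that not every base diagram is realizable (the version of this model with intersection index $+1$ does not exist).
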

The converse trade is given by the next theorem.
\begin{theorem}[Lefschetz trade]\label{th:converse trade}
\label{th:singularizing}Let $\tilde f\colon (\tilde{M}^4,\tilde D) \rightarrow (\tilde N^2,\partial \tilde N)$ be a boundary Lefschetz fibration with connected fibres, and assume that the vanishing cycles at a Lefschetz fibre, $F_{q_0}$, and at  an elliptic fibre, $F_{q_1}$ form a dual pair. Then there is a boundary Lefschetz fibration,  $f\colon (M^4,D) \rightarrow (N^2,\partial N)$, such that:
\begin{itemize}
\item $N$ is obtained from $\tilde N$ by adding a corner at $q_1$,
\item $f$ and $D$ agree with $\tilde f$  and $\tilde{D}$ outside $\tilde f^{-1}(V_2)$, where $V_2$ is a neighbourhood of the path that expresses a vanishing cycles as a dual pair,
\item $D[2] = \tilde D[2] \cup \{p\}$ and hence  $f$ has one elliptic--elliptic singularity more than $\tilde f$,
\item $D$ and $\tilde D$ have the same parity,
\item $f$ has one Lefschetz singularity less than $\tilde f$,
\item $f$ is homologically essential if and only if $\tilde{f}$ is.
\end{itemize}
\end{theorem}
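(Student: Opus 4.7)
The plan is to run the proof of Theorem~\ref{th:smoothing} in reverse, producing $f$ from $\tilde f$ by identifying a neighbourhood of the Lefschetz-plus-elliptic pair with the local model produced by the forward trade, and then replacing it by its pre-image (an elliptic--elliptic singularity) via a connected sum with a standard model on $S^4$.

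First, I would isolate the relevant piece of $\tilde f$. Fix a simple path $\gamma\colon [0,1]\to\tilde N$ realizing the dual pair between the vanishing cycles at $q_0$ and $q_1$, and let $V_2\subset\tilde N$ be a thin tubular neighbourhood of $\gamma$ containing no other critical values and meeting $\partial\tilde N$ only near $q_1$. The restriction of $\tilde f$ to $\tilde f^{-1}(V_2)$ is then a compact piece of a boundary Lefschetz fibration with exactly one Lefschetz singularity, one elliptic singularity, and torus fibres elsewhere.

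Second, I would compare this restriction with a standard local model $g_0$, namely the analogous neighbourhood of a Lefschetz-plus-dual-elliptic pair in the $S^4$-fibration already used to prove Theorem~\ref{th:smoothing}. Lefschetz fibrations over the disc are classified up to fibre-preserving diffeomorphism by the conjugacy classes of their vanishing cycles in the mapping class group of the regular fibre, and the dual pair hypothesis forces the two vanishing cycles of the restricted $\tilde f$ to be a primitive basis of $H_1(F;\zz)$ matching those of $g_0$. Combined with the normal forms of Lemma~\ref{lem:boundmaplocform} near both singular fibres, this should produce a fibration isomorphism between $\tilde f|_{\tilde f^{-1}(V_2)}$ and $g_0$.

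Having made this identification, I would perform the cut-and-paste which is the reverse of the forward trade: inside $\tilde M$, replace $\tilde f^{-1}(V_2)$ with a neighbourhood of the elliptic--elliptic singularity of the standard toric fibration $S^4\to D^2$ that served as input to Theorem~\ref{th:smoothing}. Globally this is realised as a connected sum of $\tilde f$ with a suitable boundary Lefschetz fibration on $S^4$ in the sense of Theorem~\ref{thm:glue}: the total space remains $\tilde M\# S^4\cong\tilde M$, $\tilde f$ and $f$ agree outside $V_2$, the Lefschetz fibre is replaced by a new elliptic--elliptic singularity, and the smooth piece of $\partial\tilde N$ near $q_1$ acquires a corner. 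The parity statement and the equivalence of homological essentiality then follow directly from the corresponding assertions in Theorem~\ref{thm:glue}.

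The main obstacle I anticipate is the second step. The dual pair hypothesis has to rigidify simultaneously the Lefschetz monodromy (a Dehn twist along its vanishing cycle) and the local $S^1$-bundle structure of the elliptic boundary fibre (determined by its own vanishing cycle); it is only because the dual pair condition guarantees that these two cycles span $H_1(F;\zz)$ that the corresponding monodromies together pin down the isomorphism type of the pair-of-pants fibration $\tilde f|_{\tilde f^{-1}(V_2)}$. Once this local classification is in place, every remaining clause in the conclusion reduces to a direct application of Theorem~\ref{thm:glue}.
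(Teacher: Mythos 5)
Your proposal follows essentially the same strategy as the paper: isolate the Lefschetz-plus-elliptic region over a neighbourhood $V_2$ of $\gamma$, recognize that the dual-pair condition pins down this region up to fibration-isomorphism, and then realise the surgery as a connected sum with the $S^4$ fibration of Lemma~\ref{lem:fibons4}, gluing in the elliptic--elliptic half in place of the Lefschetz-plus-elliptic half. The uniqueness you argue for in your second step is exactly the content of Lemma~\ref{lem:disc fibration}, which the paper packages and proves separately; your reasoning (reduce to the isotopy class of the gluing over a regular fibre, then to its action on $H_1$ of the torus, and observe that the dual-pair condition determines this) is the same as that lemma's proof.

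There is, however, a real gap in your treatment of the parity. You assert that the parity statement ``follows directly from the corresponding assertions in Theorem~\ref{thm:glue}''. But the parity formula of Theorem~\ref{thm:glue}, $\varepsilon_{\tilde D} = -\varepsilon_{D_1'}\varepsilon_{D_2'}$, is proved for connected sums performed at points $p_i\in D_i[2]$, with balls removed around those points. The reverse trade is not of this form: you are removing $\tilde f^{-1}(V_2)$, which contains a Lefschetz fibre and an elliptic fibre but no point of $\tilde D[2]$, and you are gluing in $f_{S^4}^{-1}(V_1)$, which is precisely the neighbourhood of the $D[2]$-point of the $S^4$ model. So Theorem~\ref{thm:glue} does not apply and you cannot conclude the parity this way. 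The paper instead argues via orientations: because $f_{S^4}^{-1}(V_1)$ and $f_{S^4}^{-1}(V_2)$ sit on opposite sides of a common $S^3$ boundary inside $S^4$, removing $\tilde f^{-1}(V_2)$ and gluing in $f_{S^4}^{-1}(V_1)$ forces the $S^4$ piece to be used with its orientation reversed relative to $\tilde M$. Since the elliptic--elliptic singularity of the model in Lemma~\ref{lem:fibons4} has intersection index $-1$, the orientation reversal flips this sign to $+1$, so the new elliptic--elliptic point contributes $+1$ to the parity and $D$ has the same parity as $\tilde D$. Without tracking this orientation carefully, the parity claim is unjustified.

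A minor slip worth flagging: the $S^4$ fibration used in the trade theorems is the one from Lemma~\ref{lem:fibons4}, built by a self-plumbing of $\oo(2)$, with one Lefschetz singularity and one elliptic--elliptic singularity of index $-1$. This is not the fibration from the toric $T^2$-action on $S^4$ (Example~\ref{ex:S4}), which has two elliptic--elliptic singularities and no Lefschetz singularity. Your phrase ``the standard toric fibration $S^4\to D^2$'' refers to the wrong model; the plumbing model is exactly what makes the trade possible.
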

The proofs of these theorems rely on the existence of specific boundary Lefschetz fibrations on $S^4$ and on  the open disc $\mathbb{D}^4$.
\begin{lemma}\label{lem:fibons4}
There exists a homologically essential boundary Lefschetz fibration with connected  fibres, $f_{S^4}\colon (S^4, D^2) \to (N,\partial N)$, with the following properties (see Figure~\ref{fig:closedan}):
\begin{itemize}
\item $D[2]$ has only one point, which has index $-1$,
\item $N$ is the disk with one corner,
\item $f_{S^4}$ has only one Lefschetz singularity,
\item the vanishing cycles of the Lefschetz fibre and any elliptic fibre form a dual pair.
\end{itemize}
\end{lemma}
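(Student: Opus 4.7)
My plan is to construct $f_{S^4}$ by gluing two standard local models along their common $S^3$-boundary. I would take $N$ to be a topological disk with one marked corner $q$, decomposed as $N = U \cup V$ where $U$ is a corner neighbourhood modelled on $\set{(x_1,x_2) : x_i \geq 0,\ x_1+x_2 \leq \epsilon}$ and $V = N \setminus U$ is a topological disk with smooth boundary. On the corner piece I would place $f^{-1}(U) = B_1 \cong D^4 \subset \cc^2$ with the standard elliptic--elliptic local model $\mu(z_1,z_2) = (\abs{z_1}^2,\abs{z_2}^2)$, which automatically contributes the unique point of $D[2]$ over $q$ with the required local normal form. On the Lefschetz piece I would take $f^{-1}(V) = B_2 \cong D^4$ and place a Lefschetz fibration $\psi$ with a single interior critical point modelled on $(w_1,w_2) \mapsto w_1 w_2$, whose generic fibres are annuli and which degenerates to a circle along the arc $Z \cap V$. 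The gluing $\partial B_1 \cong \partial B_2 \cong S^3$ is to be chosen so that each annular fibre boundary on $\partial B_2$ is identified with the corresponding meridian circle of the torus $\mu^{-1}(x_1,x_2)$ on $\partial B_1$, closing the annuli into $T^2$ generic fibres of $f_{S^4}$. Since any diffeomorphism glues two $4$-balls into $S^4$, the total space is $S^4$.

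I will then verify the listed properties in turn. Connectedness of fibres and the local normal forms at the elliptic--elliptic and Lefschetz singularities are immediate from the construction, so by Lemma~\ref{lem:boundmaplocform} the resulting $f_{S^4}$ is a fibrating boundary map. The divisor $D$ is a nodal $2$-sphere with single node at $p \in D[2]$, whose intersection index is $-1$ after fixing the orientation convention on the gluing diffeomorphism. For homological essentiality I will invoke Alexander duality: since $D \subset S^4$ is a nodal sphere with $H^1(D;\zz) \cong \zz$, one obtains $H_2(S^4 \setminus D;\zz) \cong \zz$, and by tracing the construction the generator is represented by the generic torus fibre. For the dual pair condition, in the local Lefschetz model $w_1 w_2$ the Lefschetz vanishing cycle (the core circle of the annular fibre, shrinking as the base coordinate tends to the critical value) and the elliptic vanishing cycle (the boundary circle of the annular fibre, collapsing along the elliptic boundary arc) are the two transverse $S^1$-factors of $H_1(T^2;\zz)$, hence primitive and spanning, so together they form a dual pair along the path in $N \setminus \mathrm{Crit}(f_{S^4})$ joining these two singular values.

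The main obstacle will be the explicit construction of the Lefschetz piece $\psi$ on $B_2$: one must arrange the boundary behaviour of $\psi$ both to match the toric torus fibration on $\partial B_1$ along the gluing arc $\partial U \cap V$ and to develop a collapsing elliptic circle fibration along the arc $Z \cap V$, all while keeping a single honest Lefschetz critical point in the interior. This requires interpolating between the holomorphic Lefschetz model at the interior critical point and the prescribed boundary behaviour, while preserving the local normal forms defining a fibrating boundary map. Once such a $\psi$ has been produced, the remaining verifications become routine.
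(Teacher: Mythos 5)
Your proposal inverts the paper's order of construction: you want to build the two local pieces over $U$ and $V$ separately and glue along the common $S^3$, whereas the paper first constructs the global fibration via a self-plumbing of the disc bundle of $\mathcal{O}(2)\to\cc P^1$ and only afterwards (in Lemma~\ref{lem:disc fibration}) extracts the Lefschetz disc piece you call $\psi$. This reversal is not a harmless re-packaging: in the paper, Lemma~\ref{lem:disc fibration} is a \emph{consequence} of Lemma~\ref{lem:fibons4}, so you cannot use it as an input without circularity.

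The central gap is the one you name yourself: you never construct $\psi$. You posit a Lefschetz fibration on $B_2\cong D^4$ over the half-disc $V$, with a single interior Lefschetz critical point and elliptic boundary behaviour matching the Clifford-torus fibration on $\partial B_1\cong S^3$, and you defer the construction as the ``main obstacle''. But that obstacle is the entire content of the lemma. The paper's Step~2 (the $T^2$-equivariant quotient of $\mathcal{O}(2)$, the self-plumbing by the conjugate map $\Phi(z,w)=(\bar w,\bar z)$, and the explicit monodromy computation giving a positive Dehn twist on $e_{\theta_1}+e_{\theta_2}$) exists precisely to produce this piece with the correct boundary behaviour. Without a concrete $\psi$, the claim that the gluing closes up into the desired fibration, that $B_2$ is in fact a $4$-ball, and that the vanishing cycles form a dual pair are all unverified. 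Incidentally, your description of the fibres of $\psi$ as annuli is a confusion: in a boundary Lefschetz fibration over an interior point of $V$, the fibre is a closed torus; the local annulus $w_1w_2=c$ is only a neighbourhood of the vanishing cycle inside that torus.

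There is a second, more insidious error in the assertion that the intersection index of the elliptic--elliptic point can be set to $-1$ ``after fixing the orientation convention on the gluing diffeomorphism.'' The index is not a free parameter. The paper's Remark~\ref{sec:singtrades}.10 makes exactly this point: with index $+1$ the monodromy around the hole would be a \emph{negative} Dehn twist, which cannot be filled in by a genuine Lefschetz singularity, and hence no such fibration exists over that base diagram. The value $-1$ is forced by the requirement that the Lefschetz critical point produce a positive Dehn twist; verifying this forcing is precisely the purpose of the monodromy computation in the paper's proof, and your proposal contains no substitute for it. Until you (i) explicitly produce $\psi$ on $D^4$, and (ii) show that the orientation of the resulting elliptic--elliptic point is $-1$ as a consequence rather than a choice, the argument is incomplete.
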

\begin{figure}
\includegraphics[scale=.25]{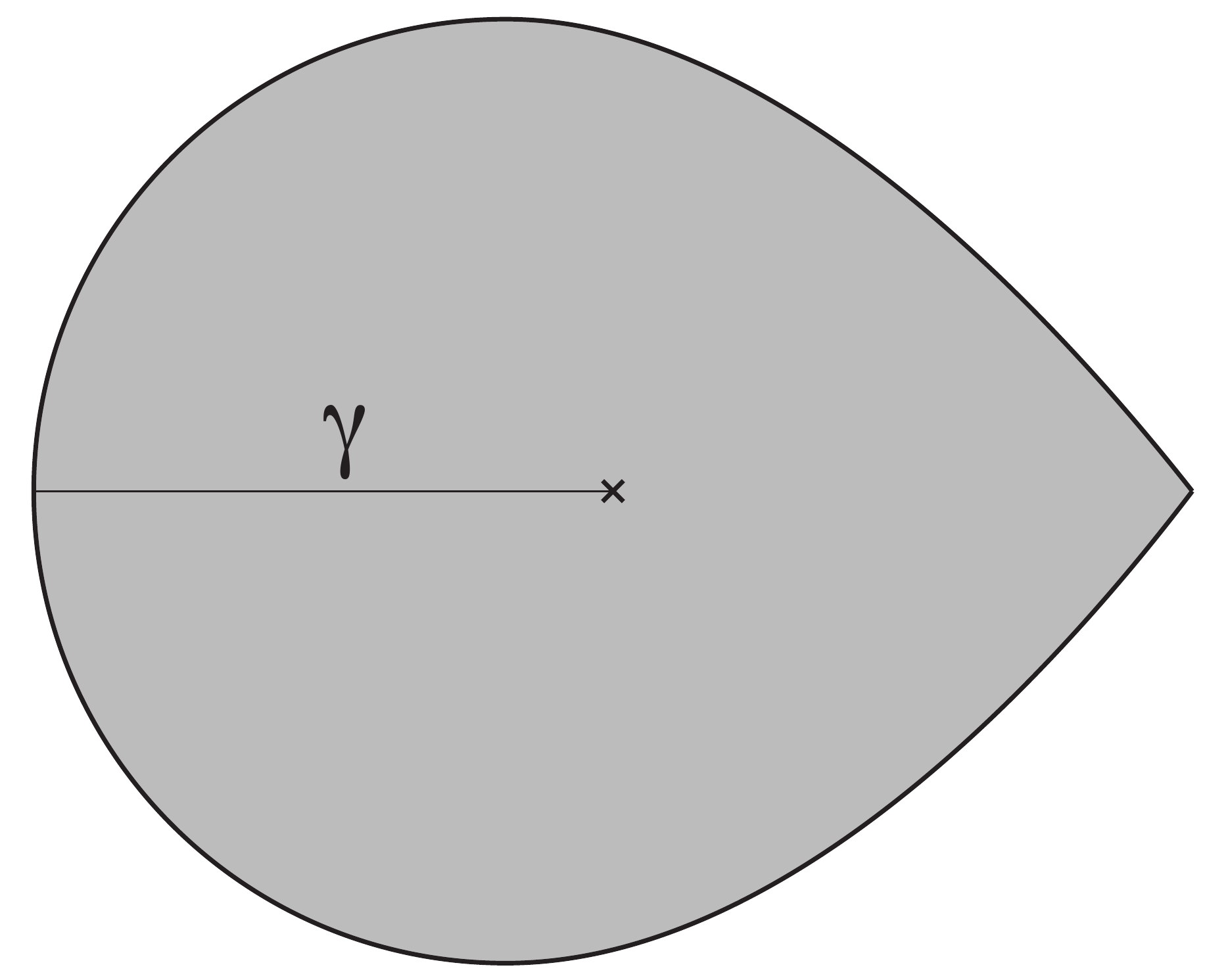}
\caption{The base of the boundary Lefschetz fibration on $S^4$ together with a path expressing the Lefschetz and elliptic singularities as a dual pair.}\label{fig:closedan}
\centering
\end{figure}
The proof of this lemma is somewhat long, so we will postpone it to this end of this section.
\begin{lemma}\label{lem:disc fibration}
Let $(\mathbb{D}^4,D)$ be the open disc in $\cc^2$ with divisor $I_{z_1}$ and let $\mathbb{D}^2_+ \subset \rr^2$ be the open half disc with boundary in the real axis
\[
\mathbb{D}^2_+  = \{ (x,y) \in \rr^2\colon x^2 + y^2 < 1 \mbox{ and } x\geq 0\}.
\]
Then there is a proper boundary Lefschetz fibration with connected fibres, $f_{\mathbb{D}^4} \colon (\mathbb{D}^4,D) \to (\mathbb{D}^2_+,\partial \mathbb{D}^2_+)$, such that:
\begin{itemize}
\item $f_{\mathbb{D}^4}$ has a single Lefschetz fibre,
\item the vanishing cycles of the Lefschetz fibre and the elliptic fibre form a dual pair.
\end{itemize}
Further, if $f \colon (M,D) \to (\mathbb{D}^2_+,\partial \mathbb{D}^2_+)$ is a proper boundary Lefschetz fibration with connected fibres with the two properties above, then $f$ is equivalent to $f_{\mathbb{D}^4}$, that is, there is a commutative diagram
\begin{center}
\begin{tikzcd}
\mathbb{D}^4 \ar[r] \ar[d,"f_{\mathbb{D}^4}"] & M \ar[d,"f"]\\
(\mathbb{D}^2_+,\partial \mathbb{D}^2_+) \ar[r,] & (\mathbb{D}^2_+,\partial \mathbb{D}^2_+)
\end{tikzcd}
\end{center}
where the horizontal maps are diffeomorphisms.
\end{lemma}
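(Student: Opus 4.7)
The plan is to obtain existence by restricting the boundary Lefschetz fibration $f_{S^4}$ of Lemma~\ref{lem:fibons4} to a half-disc neighbourhood of a dual-pair path, and to obtain uniqueness via a mapping class group argument.

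For existence, let $c \in N$ be the corner of the base of $f_{S^4}$, $q_0 \in N$ its Lefschetz critical value, and $\gamma \colon [0,1] \to N$ a simple path with $\gamma(0) = q_0$ and $\gamma(1) = q_1 \in \partial N \setminus \{c\}$ realising the vanishing cycles of $f_{S^4}$ as a dual pair. Choose an open neighbourhood $V \subset N$ of $\gamma$ that avoids $c$ and is diffeomorphic, as a pair, to $(\mathbb{D}^2_+, \partial \mathbb{D}^2_+)$, with $V \cap \partial N$ corresponding to $\partial \mathbb{D}^2_+$. Set $U := f_{S^4}^{-1}(V) \subset S^4$. Since $V$ avoids $c$, the divisor $U \cap D$ is smooth, and the restriction $f := f_{S^4}|_U$ is a proper boundary Lefschetz fibration with a single Lefschetz fibre, a single elliptic fibre over $q_1$, and vanishing cycles forming a dual pair. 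It remains to identify $(U, U \cap D)$ with $(\mathbb{D}^4, \{z_1 = 0\})$. For this, I would argue that $U$ deformation retracts onto the preimage of $\gamma$, which is obtained by attaching a Lefschetz thimble to the elliptic fibre $F_{q_1}$ along the elliptic vanishing cycle; since the two vanishing cycles form a dual pair, the result is a topological disc, and a tubular neighbourhood of this disc inside $S^4$ is a $4$-ball.

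For uniqueness, suppose $f \colon (M, D) \to (\mathbb{D}^2_+, \partial \mathbb{D}^2_+)$ is any proper boundary Lefschetz fibration with connected fibres, a single Lefschetz fibre, and vanishing cycles forming a dual pair. After precomposing with a diffeomorphism of $\mathbb{D}^2_+$ fixing $\partial \mathbb{D}^2_+$ setwise, I may assume $f$ has the same Lefschetz critical value as $f_{\mathbb{D}^4}$ and the same dual-pair path $\gamma'$. Cover $\mathbb{D}^2_+$ by a disc $W_{\mathrm{L}}$ around the Lefschetz value, a collar $W_\partial$ of $\partial \mathbb{D}^2_+$, and an open region $W$ on which both fibrations are submersions. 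Over $W_{\mathrm{L}}$ the two fibrations agree after orientation-preserving coordinate changes via the standard Lefschetz model, and Lemma~\ref{lem:boundmaplocform} similarly identifies them over $W_\partial$. Since $W$ is contractible, both fibrations restrict to trivial torus bundles over $W$. The gluing data on the overlaps is encoded by an element of the mapping class group $\mathrm{SL}(2,\mathbb{Z})$ of the torus fibre, and the dual pair hypothesis forces the two vanishing cycles to span $H_1(T^2;\mathbb{Z})$; any mapping class fixing a basis of homology is isotopic to the identity, so the gluing is rigid. Patching the three local identifications yields the desired diffeomorphism $\Phi \colon \mathbb{D}^4 \to M$ satisfying $f \circ \Phi = f_{\mathbb{D}^4}$.

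The main obstacle is the rigidity step in the uniqueness argument: one must verify carefully that the three-chart gluing is uniquely determined up to isotopy, and this relies essentially on the dual pair condition forcing the vanishing cycles to generate $H_1(T^2;\mathbb{Z})$. The existence claim $U \cong \mathbb{D}^4$ is more elementary but still requires tracking how attaching a Lefschetz thimble collapses a torus fibre to a disc, again using the dual pair property.
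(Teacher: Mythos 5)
Your uniqueness argument is structurally the same as the paper's, just with a three-chart cover (Lefschetz disc, boundary collar, intermediate region) where the paper uses a two-chart cover (a neighbourhood $U_1$ of the elliptic fibre and a neighbourhood $U_2$ of the Lefschetz fibre with a single overlap). Both reduce the gluing to its effect on $H_1(T^2;\zz)$ and invoke the dual-pair condition; the extra intermediate region in your version is not needed and forces you to control two gluings instead of one, but the conceptual content matches.

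The existence part, however, has a genuine gap. You take $U = f_{S^4}^{-1}(V)$ for $V$ a half-disc neighbourhood of the dual-pair path $\gamma$, and you argue $U \cong \D^4$ by claiming that $U$ deformation retracts onto the preimage $F_\gamma$ of $\gamma$, that $F_\gamma$ is ``obtained by attaching a Lefschetz thimble to the elliptic fibre along the elliptic vanishing cycle'' and is ``a topological disc,'' and that a tubular neighbourhood of a disc in $S^4$ is a $4$-ball. This does not work. The preimage $F_\gamma$ is a $3$-dimensional stratified space: a nodal torus over $\gamma(0)$, a smooth torus bundle over $\gamma((0,1))$, and a circle over $\gamma(1)$; it is not a disc, and the ``attaching'' picture you describe mixes the $2$-dimensional Lefschetz thimble with the $1$-dimensional elliptic fibre in a way that doesn't correspond to $F_\gamma$. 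Even if one retracted onto some lower-dimensional spine, contractibility alone would not give a diffeomorphism of $U$ to $\D^4$, and $U$ is not literally a tubular neighbourhood of a submanifold. The paper sidesteps all of this with a much cleaner complementation argument: take $V_1$ a small corner neighbourhood and $V_2$ its complement plus a collar. By Lemma~\ref{lem:boundmaplocform} the fibration over $V_1$ is $(z_1,z_2)\mapsto(\abs{z_1}^2,\abs{z_2}^2)$, so $f_{S^4}^{-1}(V_1)$ is an open $4$-ball; hence $f_{S^4}^{-1}(V_2)$ is the complement of a closed $4$-ball in $S^4$, which is again a $4$-ball. If you want to insist on using a smaller half-disc $V \subset V_2$ containing $\gamma$, the correct route is to exhibit an ambient isotopy of $V$ to $V_2$ inside $N$ (rel boundary) and lift it through the fibration; the deformation retraction/tubular neighbourhood argument as written would need to be replaced.
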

\begin{proof}
The existence of the fibration $f_{\mathbb{D}^4}$ follows from Lemma~\ref{lem:fibons4}. Indeed, we split the base of $f_{S^4}$ in two parts, $V_1$, a neighbourhood of the vertex and $V_2$, the rest of the base plus a small overlap with $V_1$, as indicated in Figure~\ref{fig:split}. Then, due to Lemma~\ref{lem:boundmaplocform}, on $f^{-1}(V_1)$, in appropriate coordinates, we have
\[V_1 = \{(x,y)\in \rr^2\colon x+y <1, x \geq0, y\geq 0\}\]
and the fibration is given by
\[f_{S^4}(z_1,z_2) = (|z_1|^2,|z_2|^2).\]
Hence, $f_{S^4}^{-1}(V_1)$ is a disc and its complement $f_{S_4}^{-1}(V_2)$ is also a disc. But
\[ f_{S_4}|_{V_2}\colon  V_2 \to f_{S^4}(V_2)\]
has all the properties required in the lemma after we choose a diffeomorphism between $V_2$ and $\mathbb{D}^2_+$. Therefore we have existence.
\begin{figure}
\begin{overpic}[unit=1mm,scale=0.25]{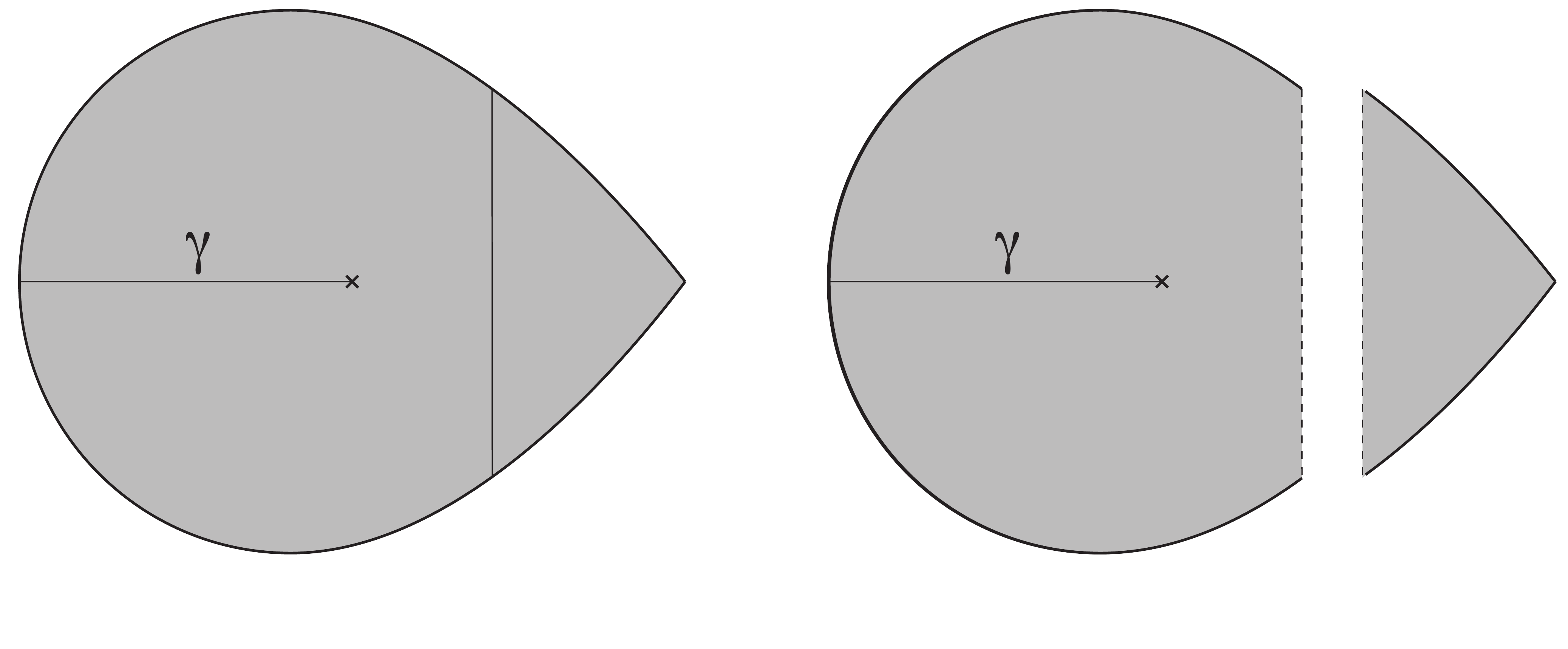}
	\put(20,0){$S^4$}
	\put(72,0){$\mathbb{D}^4$}
	\put(89,0){$\cup_{\partial}$}
        \put(98,0){$\mathbb{D}^4$}
        	\put(49,24){$\leadsto$}
        	\put(72,18){$V_2$}
        	\put(93,18){$V_1$}
\end{overpic}
\caption{The base of the boundary Lefschetz fibration on $S^4$ split in two halfs, each half being a fibration of $\mathbb{D}^4$.}\label{fig:split}
\centering
\end{figure}

To prove the uniqueness part we study all possible ways such a fibration may arise. Let $f\colon M \rightarrow (\mathbb{D}^2_+,\partial \mathbb{D}^2_+)$ be a boundary Lefschetz fibration satisfying the assumptions of the lemma. Without loss of generality, we assume that the image of the Lefschetz singularity is $(2/3,0)$ and we split $\mathbb{D}^2_+$ in two parts:
\begin{align*}
	U_1 &= \{(x,y)\in \mathbb{D}^2_+\colon x \leq 1/2\},\\
	U_2 &= \{(x,y)\in \mathbb{D}^2_+\colon x \geq 1/2\}.
\end{align*}
The set $f^{-1}(U_2)$ is a neighbouhood of the Lefschetz fibre and hence its differentiable type as a fibration is fully determined \cite{GS99}.  Similarly, the set $f^{-1}(U_1)$ is a neighbourhood of an elliptic fibre hence its differentiable type as a fibration is also fully determined:
\[f^{-1}(U_1) = \mathbb{D}^2 \times S^1 \times (-1,1), \quad f(re^{i\theta}, \psi, t) = (r^2,t).\]
Therefore all possible different fibrations with the desired properties are determined by the different ways these two pieces can be glued together modulo the action of the isomorphism group of each half of the fibration.

Since the gluing takes place over a regular fibration over an interval, the isotopy class of the gluing map is determined by the isotopy class of the map it induces at a single fibre. Since the fibres are tori, this is in turn determined by the corresponding map in homology. Since the vanishing cycles form a dual pair there is, modulo the action of the isomorphism group of the fibration over $V_1$,  a unique way to glue these together.
\end{proof}
Next we show how to use Lemmas \ref{lem:fibons4} and \ref{lem:disc fibration} to prove  both singularity trade theorems:
\begin{proof}[Proof of Theorem~\ref{th:smoothing}]
Applying Theorem~\ref{thm:glue} to the boundary Lefschetz fibration on $M$ and on $S^4$ gives rise to a boundary Lefschetz fibration on $M\# S^4\simeq M$,  for which the inclusion $M\backslash \set{p} \hookrightarrow M$ preserves fibrations, in particular, we see that the new fibration on $M$ only changes in the small ball around $p$ used for the connected sum procedure. Since the divisor in $S^4$ has only one point in the top stratum, the new divisor satisfies $\tilde D[2] =  D[2] \backslash\{p\}$ and $\tilde{D}$ and $D$ have the same index. Given the way the fibrations are glued, we see that the effect on the base is to smooth out the corner corresponding to $f(p)$. Continuing inductively gives rise to a boundary Lefschetz fibration with embedded divisor.
\end{proof}

\begin{proof}[Proof of Theorem~\ref{th:converse trade}] 
Under the conditions of the theorem, $\gamma$ has a neighbourhood, $V_2$, diffeomorphic to $\mathbb{D}_+^2$ in which the fibration has only one Lefschetz singularity whose vanishing cycle forms a dual pair with the elliptic singularity. Hence, by Lemma~\ref{lem:disc fibration}, $f^{-1}(V_2)$ is diffeomorphic to $\mathbb{D}^4$ and $f$  is equivalent to the fibration of Lemma~\ref{lem:disc fibration}. Since the fibration on $S^4$ splits as two discs, one fibreing over $\mathbb{D}_+^2$ and the other fibreing over a neighbourhood, $V_1$, of the origin in $(\rr_+)^2$ (see Figure~\ref{fig:split}), we can realise $M \# S^4$ as follows: remove the disc $f^{-1}(V_2)$ and glue back, by the natural identification of the boundary, $f_{S^4}^{-1}(V_1)$.

Since this procedure corresponds to performing connected sum with $S^4$, the final manifold is still diffeomorphic to $M$ and the fibration only changes in the part that has been surgered in, which includes the removal of the Lefschetz singularity from $f^{-1}(V_2)$ and the inclusion of the elliptic--elliptic singularity of $f_{S^4}^{-1}(V_1)$. Finally, notice that the process of  filling the boundary of $f^{-1}(V_2)$ with $f_{S^4}^{-1}(V_1)$ is not compatible with the given orientations of these spaces since they both appear at opposite sides of a boundary in $S^4$. That is, the orientation of $M$ is compatible with the opposite orientation of  $f_{S^4}^{-1}(V_1)$.  Since the elliptic--elliptic singularity for the fibration in $S^4$ had index $-1$ and the orientation of $S^4$ was reversed in the connected sum process, the intersection index of the new elliptic--elliptic singularity on $M$ is $+1$ and hence the overall parity of the divisor is unchanged.
\end{proof}
To finish the proof of the trade theorems we must establish Lemma~\ref{lem:fibons4}, which we do next.
\begin{proof}[Proof of Lemma~\ref{lem:fibons4}] The proof is done in two steps. In the first step we show that if $M$ is the total space of a boundary Lefschetz fibration whose singularities are as stated in Lemma~\ref{lem:fibons4}, then $M = S^4$.  In the second step we show that such a fibration exists.

{\it Step 1.} We observe once again that $M$  is made of two fibrations glued together, as illustrated in Figure~\ref{fig:split}: one fibration with an elliptic--elliptic singularity over $V_1$ and one with a Lefschetz singularity over $V_2$. The fibration over $V_1$ is a copy of $\mathbb{D}^4$ added along its $S^3$ boundary, that is, $M = f^{-1}(V_2) \cup \mbox{ 4-handle}$. The space $f^{-1}(V_2)$ itself can be readily described as a handlebody: we start with a neighbourhood of a regular fibre, then add a $-1$-framed $2$-handle along the vanishing cycle of the Lefschetz singularity to obtain a neighbourhood of the Lefschetz singular fibre and a $0$-framed $2$-handle along the vanishing cycle of the elliptic singularity. Therefore the Kirby diagram of $M$ is the one depicted in Figure~\ref{fig:Kirby} (a). We can then slide the $2$-handle that goes around both 1-handles to obtain Figure \ref{fig:Kirby} (b) and see that the resulting $2$-handle separates as a $0$-framed $2$-handle from the rest of the diagram and hence cancels with the $3$-handle. The remaining pairs of $1$- and $2$-handles clearly cancel each other (Figure~\ref{fig:Kirby} (c)) leaving us with the empty diagram, which corresponds to $S^4$.
\begin{figure}
\includegraphics[scale=.25]{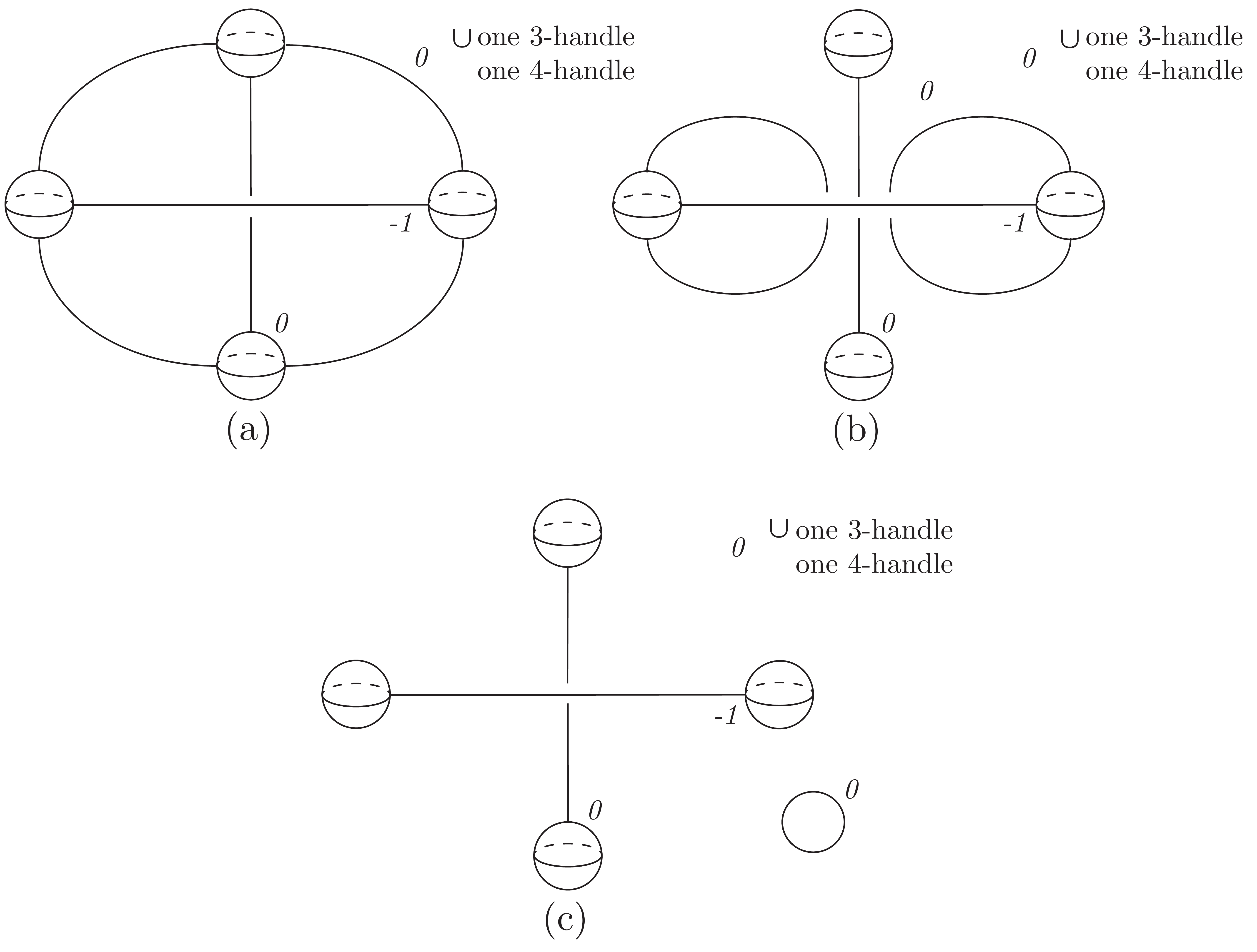}
\caption{Kirby diagram for the total space of the fibration described in Lemma~\ref{lem:disc fibration}.}\label{fig:Kirby}
\centering
\end{figure}

{\it Step 2.} To construct the desired fibration we will use a plumbing construction applied to the disc bundle of $\oo(2) \rightarrow \cc P^1$ in a way that is compatible with the natural torus fibration of that space. Throughout we will use fixed parametrizations $\phi_1,\phi_2\colon \cc^2 \to \oo(2)$ for which the change of coordinates is given by
\[\phi_2^{-1} \circ \phi_1(z,w) = (z^{-1},z^{-2}w).\]
We will refer to $\phi_1$ as parametrizing a trivialization of $\oo(2)$ with the south pole removed and similarly $\phi_2$ does not cover the fibre over the north pole.

Rotation on both coordinates in the parametrization $\phi_2$ give rise to a torus action on $\oo(2)$ which, in the parametrizations above, is given by
\begin{equation}\label{eq:torusaction}
\begin{aligned}
(e^{i \theta_1},e^{i\theta_2})\cdot \phi_1(z,w) &= \phi_1(e^{-i\theta_1}z,e^{i(-2\theta_1+\theta_2)}w),\\
(e^{i \theta_1},e^{i \theta_2})\cdot \phi_2(z,w) &= \phi_2(e^{i\theta_1}z,e^{i\theta_2}w).
\end{aligned}
\end{equation}
To describe the quotient of $\oo(2)$ by this torus action, we will also want to consider $[-1,1]\times \rr_+$. Of course this space can be parametrised by a single, rather obvious, chart, but it will be convenient to parametrise it by two charts instead. We consider the parametrizations
\[\psi_1 \colon \rr_+\times \rr_+ \to (-1,1] \times \rr_+ \subset   [-1,1] \times \rr_+ \qquad \psi_1(x_1,y_1) = \left(\frac{1-x_1}{1+ x_1},\frac{y_1}{(1+x_1)^2}\right),\]
\[\psi_2 \colon \rr_+\times \rr_+ \to [-1,1) \times \rr_+ \subset   [-1,1] \times \rr_+ \qquad \psi_2(x_2,y_2) = \left(-\frac{1-x_2}{1+ x_2},\frac{y_2}{(1+x_2)^2}\right),\]
and keep in mind that these parametrizations induce opposite orientations, with $\psi_2$ agreeing with the natural orientation of $[-1,1]\times \rr_+$.
\begin{lemma}\label{lem:pis}
If we let $h\colon S^2 \rightarrow \rr$ be the height function and $g\colon \mathrm{Sym}^2\oo(2) \rightarrow \rr$ be the Fubini-Study metric, then
\begin{align*}
f\colon \oo(2) &\rightarrow [-1,1]\times \rr_+\\
(z,w) &\mapsto (h(z),g_z(w,w)),
\end{align*}
defines a quotient map for the torus action on $\oo(2)$. Further, $f$ is a proper boundary fibration with elliptic divisor induced by the holomorphic log divisor consisting of the zero-section and fibres over the north and south pole.
\end{lemma}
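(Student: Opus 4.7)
My plan is to verify each assertion chart-by-chart, exploiting the fact that the two parametrizations $\psi_1, \psi_2$ of $[-1,1] \times \rr_+$ are designed to match $\phi_1, \phi_2$ under $f$.

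The first step is to compute $f$ in coordinates. Using the height function $h \circ \phi_i = \pm (1-|z|^2)/(1+|z|^2)$ (with the sign depending on whether $\phi_i(0)$ is the north or the south pole) and the Fubini--Study fibre metric $g_z(w,w) = |w|^2/(1+|z|^2)^2$ on $\oo(2)$ (which is consistent between the two trivializations because $w_1 = w_2 z_2^2$), one arrives at the clean identity
\[
f \circ \phi_i(z, w) = \psi_i(|z|^2, |w|^2), \qquad i = 1,2.
\]
This immediately makes $f$ smooth and shows its image lies in $[-1,1] \times \rr_+$.

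With this identity the remaining claims become essentially formal. For the quotient statement, I would note that in each chart the $T^2$-action \eqref{eq:torusaction} is a phase rotation of $(z,w)$ up to a unimodular reparametrization of $T^2$ (in $\phi_1$, set $\alpha = -\theta_1$, $\beta = -2\theta_1 + \theta_2$), so $(|z|^2,|w|^2)$ realises the standard $T^2$-quotient $\cc^2 \to \rr_+^2$; composed with the diffeomorphism $\psi_i$ this is exactly $f$. Properness then follows by covering $[-1,1] \times \rr_+$ with two closed subsets $A_1 = [-1+\varepsilon,1] \times \rr_+$ and $A_2 = [-1, 1-\varepsilon] \times \rr_+$, each lying in the image of one $\psi_i$, and observing that the preimage of $K \cap A_i$ in chart $\phi_i$ is a closed subset of $\{|z|^2, |w|^2 \leq C\}$ and hence compact.

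Finally, for the boundary fibration statement the same local formula $(z,w) \mapsto (|z|^2,|w|^2)$ is precisely the normal form of Lemma~\ref{lem:boundmaplocform} for $k = 2$, so $f$ is a fibrating boundary map, and away from $\{zw=0\}$ it is manifestly a submersion onto the interior of $[-1,1]\times\rr_+$. The boundary log ideal $\langle x_i y_i \rangle$ of $[-1,1]\times \rr_+$ in the $\psi_i$ chart pulls back to $\langle |z|^2 |w|^2 \rangle$, which is the elliptic ideal associated to the holomorphic log divisor $\{zw=0\}$; these local divisors glue to the union of the zero section and the two pole fibres. I do not expect any genuine obstacle: the entire proof rests on the identity displayed above, and the only point that requires care is matching the sign conventions in $\psi_1$ versus $\psi_2$ with the choice of chart $\phi_i$ so that this identity holds on the nose rather than up to an orientation flip.
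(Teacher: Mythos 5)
Your proof follows the paper's argument exactly: compute $h\circ\phi_i$ and $g\circ\phi_i$ in the two parametrizations, observe $\psi_i^{-1}\circ f\circ\phi_i(z,w)=(|z|^2,|w|^2)$, and read off the quotient/boundary-fibration conclusions from this local normal form (the paper leaves properness and the divisor identification implicit, which you spell out). One small slip: from $\phi_2^{-1}\circ\phi_1(z,w)=(z^{-1},z^{-2}w)$ the fibre coordinates satisfy $w_1=z_2^{-2}w_2$, not $w_1=w_2 z_2^2$, but this parenthetical remark does not affect the argument since the Fubini--Study fibre metric is well-defined either way.
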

\begin{proof}
In the parametrizations $\phi_i$, the height and distance function take the form:
\begin{align*}
h\circ \phi_1(z,w) = \frac{1-\abs{z}^2}{1+\abs{z}^2}, \quad
g\circ \phi_1(z,w) = \frac{\abs{w}^2}{(1+\abs{z}^2)^2},\\
h\circ \phi_2(z,w) = -\frac{1-\abs{z}^2}{1+\abs{z}^2}, \quad
g\circ \phi_2(z,w) = \frac{\abs{w}^2}{(1+\abs{z}^2)^2},
\end{align*}
which are clearly invariant under the $T^2$-action in Equation~\eqref{eq:torusaction}. Further, for $i=1,2$, the image of $f\circ \phi_i$ lands in the image of the parametrization $\psi_i$ and we can compute the expression for $f$ in these parametrizations:
\begin{equation}\label{eq:f in local coordinates} 
f_i(z,w) := \psi_i^{-1} \circ f \circ \phi_i(z,w) = (|z|^2,|w|^2),
\end{equation}
which shows clearly that $f$ not only is the quotient map but also a boundary fibration.
\end{proof}
Now we perform a plumbing on $\oo(2)$.
\begin{definition}
Let $\pi\colon M^{2n} \rightarrow N^n$ be a $\mathbb{D}^n$-bundle, and let $\mathbb{D}_1,\mathbb{D}_2$ be disjoint disks in $N$ over which $\pi$ is trivialisable. A \textbf{self-plumbing} of $\pi$ at $\mathbb{D}_1$ and $\mathbb{D}_2$ is obtained by identifying $\pi^{-1}(\mathbb{D}_1)\simeq \mathbb{D}_1\times \mathbb{D}^n$ and $\pi^{-1}(\mathbb{D}_2)\simeq \mathbb{D}_2 \times \mathbb{D}^n$ using a map which preserves the product structure but reverses the factors.
\end{definition}
For the case at hand, let $\mathbb{D}^2\oo(2)$ be the open $\varepsilon$-disk bundle with respect to the Fubini--Study metric. By restricting $f$ to $\mathbb{D}^2\oo(2)$, we obtain a proper boundary fibration $f\colon \mathbb{D}^2\oo(2) \rightarrow [-1,1]\times [0,\varepsilon)$. 

Further, we observe that $\phi_1$ and $\phi_2$ provide trivializations of $\mathbb{D}^2\oo(2)$, hence we can use them to perform a self-plumbing of $\mathbb{D}^2\oo(2)$ at the north and south poles. Let $M$ be defined as the self-plumbing of $\mathbb{D}^2\oo(2)$ via the trivializations $\phi_i$ and the map
\begin{align*}
	\Phi\colon \cc^2 \rightarrow \cc^2 : (z,w)&\mapsto (\bar{w},\bar{z}), 
\end{align*}
that is, $\phi_1(z,w)$ is identified with $\phi_2(\bar{w},\bar{z})$.

Since the map used for the plumbing preserves elliptic ideals and identifies the north and south pole, $M$ is endowed with an elliptic divisor with a single point in $D[2]$. Since the map $\Phi$ does not match co-orientations, the elliptic divisor in $M$ has intersection index $-1$. 

To endow $M$ with a boundary fibration we only need to take a quotient of the base, $[-1,1]\times [0,\varepsilon)$, by the equivalence relation that makes the following diagram commute:
\begin{center}
\begin{tikzcd}
\mathbb{D}^2\oo(2) \ar[r,"\sim_{\Phi}"] \ar[d,"f"] & \mathbb{D}^2 \oo(2) \ar[d,"f"]\\
\left[-1,1\right] \times [0,\varepsilon) \ar[r,"\sim{\Psi}"] & \left[-1,1\right]\times [0,\varepsilon)
\end{tikzcd}
\end{center}
Since $f$ is surjective, there is a unique identification, $\sim_\Psi$, that gives rise to such a diagram. In fact, we can easily compute it in the parametrizations $\psi_i$, where it is induced by the map $\Psi(x,y) = (1-y,x+1)$. That is, the point $\psi_1(x,y)$ is identified with the point $\psi_2(y,x)$. Since $\psi_1$ and $\psi_2$ induce opposite orientations, this identification preserves the natural orientation of $[-1,1]\times [0,\varepsilon)$ and the quotient is an oriented half-open cylinder with one corner (see Figure~\ref{fig:openan}).
\begin{figure}
\begin{overpic}[unit=1mm,scale=0.4]{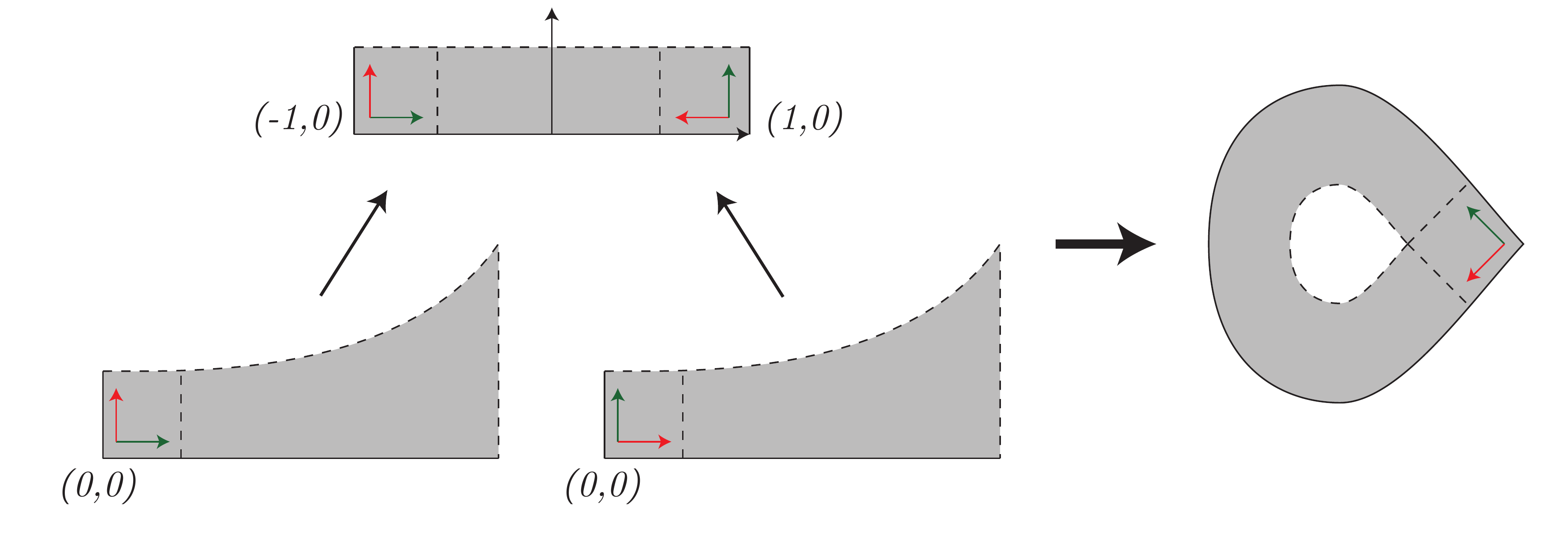}
	\put(26,26){$\psi_2$}
        \put(71,26){$\psi_1$}
        	\put(122,7){$N$}
\end{overpic}
\caption{The base of the boundary fibration constructed in Lemma~\ref{lem:plumbedfibr}.}\label{fig:openan}
\centering
\end{figure}
\begin{lemma}\label{lem:plumbedfibr}
The map $f\colon \mathbb{D}^2\oo(2) \rightarrow [-1,1]\times [0,\varepsilon)$ descends to a boundary fibration $\hat{f}\colon M \rightarrow N$.
\end{lemma}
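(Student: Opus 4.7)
My plan is to show that $\hat{f}$ is well-defined by the very construction of the equivalence relation $\sim_\Psi$, then to verify that the data inherited from $\mathbb{D}^2\oo(2)$ and $[-1,1]\times [0,\varepsilon)$ indeed make $\hat{f}\colon M\to N$ into a boundary fibration in the sense of Definition~\ref{def:bf}. Since $f\colon \mathbb{D}^2\oo(2)\to [-1,1]\times [0,\varepsilon)$ is already a proper boundary fibration (this is the restriction to the disc bundle of the statement of Lemma~\ref{lem:pis}), the only real issue is that passing to the quotients could a priori destroy the smooth structures on $M$ or $N$, the local normal forms, or the divisor ideals. The strategy is purely local at the plumbing locus.

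First, well-definedness of $\hat{f}$: by construction $\sim_\Psi$ is defined as the unique relation on $[-1,1]\times[0,\varepsilon)$ making the square commute, and the self-plumbing equivalence $\sim_\Phi$ on $\mathbb{D}^2\oo(2)$ only identifies points in the two neighbourhoods $\phi_1(\mathbb{D}^2\times\mathbb{D}^2)$ and $\phi_2(\mathbb{D}^2\times\mathbb{D}^2)$, on which $f$ is given in the charts $\psi_i$ by the identical formula $f_i(z,w)=(|z|^2,|w|^2)$ from \eqref{eq:f in local coordinates}. A direct computation with $\Phi(z,w)=(\bar w,\bar z)$ yields $f_2(\Phi(z,w)) = (|w|^2,|z|^2) = \Psi(|z|^2,|w|^2) = \Psi(f_1(z,w))$, so the square commutes and $\hat{f}$ descends to a continuous map.

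Second, I verify that $N$ inherits a manifold-with-corners structure together with a real log divisor ideal, and that $M$ inherits an elliptic divisor. Away from the identification loci both statements are obvious. Near the identified corners of $[-1,1]\times[0,\varepsilon)$, the map $\Psi$ interchanges the two boundary segments $\{x=\pm 1\}$ and sends $(1,0)$ to $(-1,0)$ via an orientation-preserving diffeomorphism of a trapezoidal neighbourhood, so $N$ is a smooth manifold with corners (a half-open cylinder with one corner, as in Figure~\ref{fig:openan}) with $\partial N$ a self-crossing log divisor with one point in $\partial N[2]$. Analogously, on $M$ the map $\Phi$ sends the two holomorphic log divisors $\{z=0\}$ and $\{w=0\}$ of one trivialization to those of the other, interchanging factors; therefore the zero section together with the two fibres over the poles of $\oo(2)$ descend to a self-crossing complex log divisor on $M$, whose associated elliptic divisor has a single point in $D[2]$, exactly the image of the plumbed poles.

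Finally, I check the defining conditions of a boundary fibration for $\hat{f}$. Strongness ($\hat f^{-1}(\partial N)=D$) and the identity $\hat f^*I_{\partial N}=I_{|D|}$ are local conditions that are preserved by the two identifications because $\Phi$ and $\Psi$ intertwine the relevant ideals. The local expression $f_i(z,w)=(|z|^2,|w|^2)$ on both plumbing charts is precisely the normal form of Lemma~\ref{lem:boundmaplocform} at an elliptic--elliptic singularity, so $\hat{f}$ is a fibrating boundary map and moreover $\hat f|_{M\setminus D}\colon M\setminus D\to N\setminus\partial N$ is a surjective submersion since it already was before the plumbing and the identification is by local diffeomorphisms compatible with $f$. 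This shows $\hat f$ is a boundary fibration. The main (mild) obstacle is purely bookkeeping: making sure the two pairs of charts $(\phi_i,\psi_i)$ with their opposite induced orientations on $[-1,1]\times[0,\varepsilon)$ glue to a genuine oriented manifold with corners on each side of the square, which is precisely why $\Psi$ comes out orientation-preserving and yields a single corner in $N$.
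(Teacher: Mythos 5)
Your argument is correct and amounts to the same reasoning the paper uses, which it leaves implicit: the lemma carries no explicit proof block, and the intended justification is exactly the preceding construction of $\sim_\Psi$ as the unique identification making the square commute, together with the local expression $f_i(z,w)=(|z|^2,|w|^2)$ in both plumbing charts. Your computation $f_2(\Phi(z,w))=(|w|^2,|z|^2)=\Psi(f_1(z,w))$ (reading $\Psi$ as the swap in the $\psi_i$-coordinates, in line with the paper's prose ``$\psi_1(x,y)$ is identified with $\psi_2(y,x)$'') and the local verification of the boundary-fibration conditions via Lemma~\ref{lem:boundmaplocform} fill in the details correctly.
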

Next we compute its monodromy along a generator of $\pi_1(N)$.
\begin{lemma}
Let $\hat{f}\colon M \rightarrow N$ be the boundary fibration from Lemma~\ref{lem:plumbedfibr}. Then the monodromy of $\hat{f}$ around a loop around the hole is a positive Dehn twist.
\end{lemma}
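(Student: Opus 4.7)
The plan is to reduce the computation to the action on fibre homology induced by the plumbing identification $\Phi$. The underlying fibration $f\colon \mathbb{D}^2\oo(2)\to[-1,1]\times[0,\varepsilon)$ has trivial monodromy (its base is contractible, and over the regular locus it is a principal $T^2$-bundle by Lemma~\ref{lem:pis}), so a loop generating $\pi_1(N)$ lifts to a path in $[-1,1]\times[0,\varepsilon)$ whose endpoints are identified by $\sim_\Psi$, and hence only $\Phi$ contributes to the monodromy.

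Because $f$ is $T^2$-equivariant, the regular fibre $F\cong T^2$ carries a canonical basis $\eta_1,\eta_2\in H_1(F;\zz)$ given by the two coordinate one-parameter subgroups $\{(e^{i\theta},1)\}$ and $\{(1,e^{i\theta})\}$ of $T^2$. A direct computation from \eqref{eq:torusaction} yields
\[
\Phi\bigl((e^{i\theta_1},e^{i\theta_2})\cdot\phi_1(z,w)\bigr)=(e^{i(2\theta_1-\theta_2)},e^{i\theta_1})\cdot\phi_2(\bar w,\bar z),
\]
so $\Phi$ is equivariant for the lattice automorphism $\rho(\theta_1,\theta_2)=(2\theta_1-\theta_2,\theta_1)$ of $T^2$. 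The induced action on $H_1(F)$ in the basis $(\eta_1,\eta_2)$ is therefore
\[
\rho_*=\begin{pmatrix}2 & -1\\ 1 & 0\end{pmatrix}\in SL(2,\zz),
\]
which has trace $2$ and is unipotent. A change of basis to $(\eta_1+\eta_2,\eta_1)$ brings $\rho_*$ to $\bigl(\begin{smallmatrix}1&1\\0&1\end{smallmatrix}\bigr)$, identifying it as a Dehn twist about the primitive class $\eta_1+\eta_2$.

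The remaining task is to fix the sign of the twist. The fibre orientation comes from the complex structure on $\oo(2)$, and $\Phi=\mathrm{swap}\circ\mathrm{conj}$ preserves it, since the complex swap on $\cc^2$ is holomorphic and complex conjugation on $\cc^2$ is orientation-preserving for a complex manifold of even complex dimension. Combined with the orientation on $N$ inherited from $[-1,1]\times[0,\varepsilon)$ and the resulting preferred generator of $\pi_1(N)$ (the one traversing $\Phi$ in the forward direction), the computation above produces a positive Dehn twist. The main delicate point is this sign bookkeeping: reversing the loop direction or relabelling the basis would invert the twist, so one must verify that the complex-analytic orientation conventions pin down the outcome as positive.
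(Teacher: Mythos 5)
Your computation of the monodromy matrix $\begin{pmatrix}2&-1\\1&0\end{pmatrix}$ agrees with the paper's, and so does the identification of the invariant cycle $\eta_1+\eta_2$. The difference, and the gap, is in the sign argument, which is the crux of the lemma.

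The observation that $\Phi$ is orientation-preserving on $\cc^2$ (swap is holomorphic, conjugation is orientation-preserving in even complex dimension) only tells you that the monodromy lands in $SL(2,\zz)$ rather than $GL(2,\zz)$; both a right-handed and a left-handed Dehn twist preserve the fibre orientation, so this does not discriminate between $\bigl(\begin{smallmatrix}1&1\\0&1\end{smallmatrix}\bigr)$ and $\bigl(\begin{smallmatrix}1&-1\\0&1\end{smallmatrix}\bigr)$. These two are conjugate in $GL(2,\zz)$ but not in $SL(2,\zz)$, so the sign is detected only after you know the orientation of the basis $(\eta_1,\eta_2)$ relative to the fibre orientation. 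Your change of basis $(\eta_1,\eta_2)\mapsto(\eta_1+\eta_2,\eta_1)$ has determinant $-1$, and reading off $\bigl(\begin{smallmatrix}1&1\\0&1\end{smallmatrix}\bigr)$ in an orientation-reversing basis silently flips the answer.

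What is actually needed is the paper's key observation: with the convention that fibre orientation followed by base orientation gives the orientation of $M$, together with the complex orientation on $\oo(2)$ in the $\phi_2$ chart (so $r_1\,dr_1\wedge d\theta_1\wedge r_2\,dr_2\wedge d\theta_2>0$) and the standard orientation $dx_1\wedge dx_2$ on the base, the fibre orientation is $-\,d\theta_1\wedge d\theta_2$. Hence $\{e_{\theta_1},e_{\theta_2}\}$ is a \emph{negatively} oriented basis of $H_1(F)$. Once this is in hand, the matrix $\begin{pmatrix}2&-1\\1&0\end{pmatrix}$, read with respect to a negative basis, is a positive Dehn twist about $e_{\theta_1}+e_{\theta_2}$. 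Your proof needs this computation; as written it would not withstand a referee asking why the twist is positive rather than negative.
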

\begin{proof}
This is a direct computation using the given change of coordinates and the plumbing map $\Phi$. Indeed, all we need to to is to track what happens with the torus action as we move along from  the chart covered by $\phi_2$ to the chart covered by $\phi_1$ and then back to $\phi_2$ via $\Psi$:
\begin{align*}
(e^{i\theta_1}, e^{i\theta_2})\cdot\phi_2(z,w) &=\phi_2(e^{i\theta_1}z,e^{i\theta_2}w)   =\phi_1(e^{-i\theta_1}z^{-1},e^{i(-2\theta_1 + \theta_2)}z^{-2}w) \\
& \sim_\Phi \phi_2(e^{i(2\theta_1 - \theta_2)}\bar{z}^{-2}\bar{w},e^{i\theta_1}\bar{z}^{-1})  = (e^{i(2\theta_1 - \theta_2)},e^{i\theta_1})\phi_2(\bar{z}^{-2}\bar{w}, \bar{z}^{-1}).
\end{align*}
Therefore we see that, in the basis $\{e_{\theta_1},e_{\theta_2}\}$ for $H^1(F)$ corresponding to the generators of the action, the monodromy transformation is given by the matrix
\[
\begin{pmatrix}
 2 & -1\\
 1 &0
\end{pmatrix}
\]
Notice that using the complex orientation of $\oo(2)$ and the standard orientation of $\rr^2$, $\{e_{\theta_1},e_{\theta_2}\}$ is a negative basis for the homology of the fibre. Using this, we see that the transformation above is a positive Dehn twist on the cycle $e_{\theta_1}+ e_{\theta_2}$. 
\end{proof}
Now we can complete $M$ to a closed manifold by glueing a neighbourhood of a single Lefschetz fibre with vanishing cycle $e_{\theta_1} + e_{\theta_2}$ in the hole of the annulus. Finally we observe that  this vanishing cycle forms a dual pair with either of the two vanishing cycles of the elliptic singularity, which in the parametrization $\phi_2$ are given by either of the cycles $e_{\theta_1}$ or $e_{\theta_2}$.
\end{proof}
\begin{remark}
Simply drawing a base diagram for a boundary Lefschetz fibration does not guarantee the existence of a fibration that realises it. For example, there is no manifold whose base diagram is that of Figure~\ref{fig:closedan}, but for which the elliptic--elliptic singularity has intersection index $1$. In the construction above this would manifest itself in the fact that without using complex conjugation the monodromy of the plumbing would be a negative Dehn twist. This highlights that the long second step in the proof above is indeed necessary. 
\end{remark}
%
\section{Examples}\label{sec:examples}
In this section we give several concrete examples of boundary fibrations. We will first show that they arise naturally as the quotient maps of effective torus actions and that our framework fits particularly well with the theory of integrable systems. This connection provides us immediately with a wealth of examples of both boundary fibrations and stable generalized complex structures. We will further illustrate our constructions by showing how starting with simple examples (of manifolds with torus actions) we can use the connected sum procedure to obtain many more examples of boundary fibrations.
\subsection{Torus actions}
We show that quotient maps of torus actions provide boundary fibrations.
\begin{proposition}\label{prop:quotientmap}
Let $T^n$ act effectively on a smooth manifold $M^{2n}$, with connected isotropy groups. Then:
\begin{itemize}
\item $N:= M^{2n}/T^n$ is a manifold with corners;
\item the quotient map defines a boundary fibration $f\colon (M,D) \rightarrow (N,\partial N)$ with connected fibres;
\item the intersection stratification of the elliptic ideal coincides with the stratification by orbit types on $M$;
\item $ND[1]$ is co-orientable;
\item if $M$ is oriented, then so is $N$;
\item if $M$ is four-dimensional and the action is not free, $f$ is homologically essential.
\end{itemize}
\end{proposition}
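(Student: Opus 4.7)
The plan is to base everything on the slice theorem for compact Lie group actions. At any $p \in M$ the isotropy $H_p$ is by hypothesis a connected subtorus $T^k \subseteq T^n$, and the slice theorem supplies a $T^n$-invariant tubular neighbourhood of the orbit $T^n \cdot p$ equivariantly diffeomorphic to $T^n \times_{T^k} S_p$, where $S_p$ is a $T^k$-representation of real dimension $n+k$. The main technical point, and likely the hardest, is to verify that effectiveness of the $T^n$-action combined with the connectedness of every nearby isotropy group forces $S_p \cong \cc^k \oplus \rr^{n-k}$ with $T^k$ acting by standard weight-one rotations on $\cc^k$ and trivially on $\rr^{n-k}$. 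A priori the faithful part of $S_p$ is a sum of $m$ complex one-dimensional $T^k$-representations with weights $\lambda_1,\ldots,\lambda_m \in \zz^k$, and demanding that every partial intersection $\bigcap_{i\in J}\ker\lambda_i$ be connected forces every sub-collection of the $\lambda_i$ to span a direct summand of $\zz^k$; combined with the dimension equality $2m + \dim S_p^{T^k} = n+k$, this leaves exactly the standard configuration $m=k$. Choosing any complementary subtorus to $T^k$ inside $T^n$ then trivialises the tube as $\cc^k \times T^{n-k} \times \rr^{n-k}$, the quotient by $T^n$ becomes $[0,\infty)^k \times \rr^{n-k}$, and the quotient map reads $(z,t,y)\mapsto (|z_1|^2,\ldots,|z_k|^2,y)$, precisely the normal form of Lemma~\ref{lem:boundmaplocform}.

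Assembling these charts globally shows that $N$ is a manifold with corners, that $f$ is a fibrating boundary map, and that on $M\setminus D$ the action is free, so $f|_{M\setminus D}$ is a principal $T^n$-bundle and in particular a surjective submersion; hence $f$ is a boundary fibration. The fibres of $f$ are orbits $T^{n-k}$, which are connected. A point lies in $D[k]$ precisely when its isotropy has dimension $k$, so the intersection-number stratification of $I_{|D|}$ matches the orbit-type stratification on $M$. For co-orientability of $ND[1]$, along each connected component of $D[1]$ the isotropy is a fixed circle $H\subset T^n$ independent of the point; choosing an orientation on $H$ turns the linearised $H$-action on the rank-two bundle $ND[1]$ into a weight-one $S^1$-action, equipping $ND[1]$ with a complex structure and hence with an orientation. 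When $M$ is oriented, each orbit inherits a natural orientation as a quotient of the oriented Lie group $T^n$ by a connected subtorus, and combining this vertical orientation with the ambient orientation on $M$ induces one on the horizontal directions, descending to an orientation of $N$.

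Finally, for homological essentiality in the four-dimensional case, suppose $n=2$ and that the action is not free, so $D\neq \emptyset$ and hence $\partial N \neq \emptyset$. Then $f|_{M\setminus D}\colon M\setminus D \to N\setminus\partial N$ is a principal $T^2$-bundle over a non-compact surface. Such bundles are classified by $H^2(N\setminus\partial N;\zz^2)$, which vanishes because $N\setminus\partial N$ is homotopy equivalent to a one-dimensional CW complex (it deformation retracts onto its $1$-skeleton). Hence the bundle is trivial, $M\setminus D$ is diffeomorphic to $(N\setminus \partial N) \times T^2$, and the generic fibre $F = T^2$ generates the $T^2$-summand of the K\"unneth decomposition of $H_2(M\setminus D;\rr)$. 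In particular $[F]\neq 0$, so $f$ is homologically essential.
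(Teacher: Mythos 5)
Your proof mirrors the paper's: apply the slice theorem to obtain a local model, verify that the slice representation is standard, and then check all claimed properties chart by chart. However, the step you yourself flag as the main technical point --- forcing $S_p \cong \cc^k \oplus \rr^{n-k}$ with standard weight-one rotations --- is not closed by the argument you give. The two conditions you invoke (every sub-collection of the weights spans a direct summand of $\zz^k$, and the dimension equality $2m + \dim S_p^{T^k} = n + k$) do not force $m = k$. Take $k = 1$, $m = 2$ with $\lambda_1 = \lambda_2 = 1 \in \zz$ and $n = 3$, $\dim S_p^{T^k} = 0$: every partial intersection $\bigcap_{i \in J}\ker\lambda_i$ is trivial, hence connected, every sub-collection spans $\zz$, the dimension equality holds, yet $m = 2 \neq 1 = k$. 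In the corresponding local model ($S^1$ acting diagonally on $\cc^2$, extended trivially along a $T^2$-orbit factor, giving an effective $T^3$-action on a $6$-manifold with all isotropies connected) the quotient $\cc^2/S^1$ is the cone over $S^2$ with no boundary stratum and the orbit map is quadratic at the fixed point, so it is not a submersion there; the conclusions of the proposition fail. The paper's own proof elides this step and simply asserts the slice decomposition. For $n = 2$ --- the only case actually used later in the paper --- the dimension count alone pins it down ($\ell = 1$: $2m + r = 3$ and faithfulness gives $m \geq 1$, forcing $m = 1$; $\ell = 2$: $2m + r = 4$ and $m \geq 2$, forcing $m = 2$), and then the connectedness of kernels forces the weights to form a $\zz$-basis. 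So your argument, like the paper's, really only works for $n = 2$ unless an extra hypothesis (e.g.\ local standardness) is added.

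The remaining parts of your write-up are correct and closely parallel the paper's. The one genuine variation is the homological essentiality argument: you observe that a principal $T^2$-bundle over the open surface $N \setminus \partial N$ is trivial because $H^2(N \setminus \partial N; \zz^2) = 0$, and conclude via K\"unneth; the paper instead cites Orlik--Raymond for the existence of a section of $f$, which yields triviality of the bundle over the interior by the same mechanism. Both routes work.
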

\begin{proof}
Let $p \in M$, and let $G_p$ denote the isotropy group of $p$ and let $\mathcal{O}_p$ denote the orbit of $p$. By assumption, $G_p$ is connected and therefore isomomorphic to $T^\ell$ for some $\ell\leq n$. By the slice theorem, there exists a neighbourhood of $\oo_p$ which is equivariantly diffeomorphic to a neighbourhood of the zero section in
\begin{align*}
G \times_{G_p} N_p\mathcal{O}_p,
\end{align*}
where $G_p$ acts linearly on $N_p\mathcal{O}_p$ by the differentiated action. Because all groups in consideration are Abelian and connected, this implies that there is a neighbourhood $U$ around $p$ of the form
\begin{align*}
U = T^{n-\ell} \times (\rr^{n-\ell}\times \cc^\ell).
\end{align*}
The $T^n = (T^{n-\ell}\times T^\ell)$-action of $U$ decomposes as $T^{n-\ell}$ acting by multiplication on $T^{n-\ell}$ and $T^\ell$ acting linearly on $\cc^\ell$. Since the irreducible representations of $T^\ell$ are one-dimensional, we may without loss of generality assume that each coordinate line in $\cc^\ell$ is preserved by the action. Therefore, if we let $\mathfrak{t}$ denote the Lie algebra of $T^\ell$, let $\mathfrak{l}$ denote the kernel of $\exp\colon \mathfrak{t} \rightarrow T^\ell$, with minimal generating set $\{\xi_1,\dots,\xi_\ell\}$ and choose $\{\alpha_1,\ldots,\alpha_\ell\} \in \mathfrak{l}^*$ the dual basis for the dual lattice, then the action on each irreducible representation has the form
\begin{align*}
\exp(\Theta)\cdot z_j = e^{2\pi i \inp{\Theta,n_j\alpha_j}}z_j, \qquad \Theta \in \mathfrak{t}.
\end{align*}
Since the action is effective we have that $n_j \neq 0$, and because the isotropy groups are connected we must furthermore have $n_j = \pm 1$. Hence, after appropriately changing the signs of some of the $\alpha_j$, the $T^\ell$-action is given by
\begin{align*}
(\exp(\theta_1\xi_1\cdot \ldots \cdot \theta_\ell\xi_\ell))\cdot (z_1,\ldots,z_\ell) = (e^{2\pi \theta_1 i}z_1,\ldots e^{2\pi \theta_\ell i}z_\ell). 
\end{align*}
This normal form for the action has the following consequences:
\begin{itemize}
\item The quotient manifold is endowed with charts of the form $\rr^{n-\ell} \times (\cc^\ell)/T^\ell \simeq \rr^n_\ell$, and is therefore a manifold with corners;
\item The quotient map $f\colon M \rightarrow N$ in the above local coordinates is given by
\begin{align*}
f\colon T^{n-\ell} \times (\rr^{n-\ell}\times \cc^\ell) &\rightarrow \rr^n_\ell\\
(q,x,z_1,\ldots,z_\ell) &\mapsto (x,\abs{z_1}^2,\ldots,\abs{z_\ell}^2).
\end{align*}
By Lemma~\ref{lem:boundmaplocform} we see that $f$ is a boundary fibration with respect to the log divisor $\partial N$;
\item Because the vanishing locus of the induced elliptic ideal is given by $f^{-1}(\partial N)$ it follows that the intersection stratification coincides with the orbit type stratification;
\item At points $p \in D[1]$, the isotropy group is given by $S^1$ and therefore $N_p\mathcal{O}_p$ inherits an $S^1$-action and consequently admits an orientation. We conclude that $D[1]$ is co-orientable;
\item When $M$ is oriented, a choice of orientation for $T^n$ gives rise to an orientation for $N$ by observing that $M\backslash D \rightarrow N \backslash \partial N$ is a principal $T^n$-bundle;
\item When $M$ is four-dimensional and the action is not free it is shown in \cite{OR70} that $f$ admits a section. Therefore it follows that $f$ is homologically essential.\qedhere
\end{itemize}
\end{proof}
The group actions underlying toric manifolds satisfy the conditions of this proposition, leading to the following result:
\begin{corollary}
Let $(M^{2n},\omega)$ be a toric manifold and let $f\colon M^{2n} \rightarrow \Delta^n$ denote the quotient map. Then $f$ is a boundary fibration.
\end{corollary}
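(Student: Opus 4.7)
The plan is to reduce the statement to Proposition~\ref{prop:quotientmap} by checking its three hypotheses for the $T^{n}$-action underlying a symplectic toric manifold: effectiveness, the dimension condition, and connectedness of all isotropy groups. The first two hold by the definition of a toric manifold, namely an effective Hamiltonian $T^{n}$-action on a $2n$-dimensional symplectic manifold, so the only substantive point is the third.

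To verify connectedness of isotropies, I would invoke the local normal form for Hamiltonian torus actions near a fixed point of an isotropy subgroup (the equivariant Darboux/slice theorem for Hamiltonian actions). This produces, around each orbit, a local model in which the $T^{n}$-action on a slice is linear and diagonal, with weights forming part of a $\zz$-basis of the weight lattice of $T^{n}$. The last property, which is the key consequence of the Delzant condition for a \emph{toric} manifold, guarantees that the isotropy of a point in the slice is precisely the subtorus cut out by setting some of the complex coordinates to zero, hence is connected. Equivalently, one can appeal to Delzant's theorem: the moment map image is a Delzant polytope $\Delta^{n}$, and the stratum-wise isotropy corresponding to a face $F$ of $\Delta^{n}$ is the subtorus of $T^{n}$ generated by the integral normals to the facets containing $F$, which is a connected subtorus.

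With connectedness of isotropies established, the moment map $\mu\colon M^{2n}\to \Delta^{n}$ is, by the Atiyah--Guillemin--Sternberg convexity theorem together with the fact that $T^{n}$ has half the dimension of $M$, precisely the quotient map onto $M/T^{n}\cong\Delta^{n}$. All hypotheses of Proposition~\ref{prop:quotientmap} are then met, and the conclusion that $f$ is a boundary fibration onto $(\Delta^{n},\partial\Delta^{n})$ follows immediately.

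I do not anticipate a genuine obstacle; the only subtlety is to cite (or sketch) carefully that the Delzant/primitivity condition built into the definition of a symplectic toric manifold is exactly what forces the isotropy groups to be connected, thereby matching the assumption of Proposition~\ref{prop:quotientmap}. Everything else is a direct application of that proposition.
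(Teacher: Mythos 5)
Your proof is correct and takes essentially the same approach as the paper: both reduce the statement to Proposition~\ref{prop:quotientmap} by noting that the $T^{n}$-action underlying a toric manifold is effective with connected isotropy groups. The paper asserts this reduction in a single sentence without elaborating, whereas you spell out that the Delzant/primitivity condition is what guarantees connectedness of isotropies.
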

In four dimensions Proposition~\ref{prop:quotientmap} provides us with fibrations that satisfy nearly all the assumptions required to apply Theorem~\ref{th:main}. However, the torus action does not guarantee that the parity of the elliptic divisor is one. To proceed we must add hypotheses to ensure that this is the case.
\begin{proposition}\label{ex:toric}
Let $f\colon (M^4,\omega) \rightarrow \rr^2$ be a toric manifold. Then the parity of the elliptic divisor obtained from Proposition~\ref{prop:quotientmap} is $1$, and therefore $M$ admits a stable generalized complex structure compatible with $f$.
\end{proposition}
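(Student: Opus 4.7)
The plan is to invoke Theorem~\ref{th:main} to obtain an elliptic symplectic structure with zero elliptic residue and imaginary parameter out of the boundary fibration supplied by Proposition~\ref{prop:quotientmap}, and then verify the parity hypothesis of Corollary~\ref{cor:ellitocomplex} by a local computation at each fixed point of the torus action. The hypotheses of Theorem~\ref{th:main} are essentially already in place: the quotient map $f\colon (M^4,D^2)\to (\mathbb{R}^2,\partial f(M))$ is a boundary fibration (Proposition~\ref{prop:quotientmap}), it has connected fibres, and it is homologically essential, since a toric moment map on a compact $4$-manifold has fixed points and Proposition~\ref{prop:quotientmap} guarantees homological essentiality whenever the action is not free.

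The heart of the argument is the parity computation. Around any point $p\in D[2]$, i.e.\ a fixed point of the $T^2$-action, the proof of Proposition~\ref{prop:quotientmap} (applied with $\ell=2$) produces complex coordinates $(z_1,z_2)$ in which the action is the standard $(e^{i\theta_1}, e^{i\theta_2})\cdot (z_1,z_2) = (e^{i\theta_1}z_1, e^{i\theta_2}z_2)$ and the map $f$ takes the form $f(z_1,z_2) = (|z_1|^2,|z_2|^2)$. Writing $z_j = x_j + i y_j$, the local strands of $D$ at $p$ are $D_1 = \{z_1 = 0\}$ and $D_2 = \{z_2 = 0\}$, and the co-orientations determined by the $S^1$-actions on their normal bundles are respectively $dx_1\wedge dy_1$ and $dx_2\wedge dy_2$. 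Since the symplectic form can be brought into Darboux form $\omega = dx_1\wedge dy_1 + dx_2\wedge dy_2$ compatible with these coordinates (by equivariant Darboux), the orientation induced by $\omega$ is $dx_1\wedge dy_1 \wedge dx_2\wedge dy_2$, which agrees with the isomorphism $N_pD_1 \oplus N_pD_2 \simeq T_pM$. Hence $\varepsilon_p = +1$ at every $p\in D[2]$, so $\varepsilon_{D'} = +1$ for every connected component $D'$ of $D$ meeting $D[2]$.

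For smooth components $D'$ of $D$, i.e.\ components with $D'[2] = \emptyset$, we must verify co-orientability. Such a component is the preimage of an edge of the moment polytope which does not meet any vertex, and is therefore the fixed-point set of a subcircle $S^1\subset T^2$ acting nontrivially on its normal bundle; this action endows $ND'$ with an orientation, so $D'$ is co-orientable and by definition $\varepsilon_{D'}=+1$ as well. This part is routine but must be performed to ensure that all components, smooth or not, satisfy the parity hypothesis.

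Combining these two verifications, the elliptic symplectic structure produced by Theorem~\ref{th:main} (with zero elliptic residue, imaginary parameter, and compatibility with $f$) satisfies all the hypotheses of Corollary~\ref{cor:ellitocomplex}. That corollary then furnishes a co-orientation of $I_{|D|}$ for which the triple induces an equivalence class of stable generalized complex structures compatible with $f$. The main conceptual obstacle is checking that the symplectic orientation globally agrees with the product orientation coming from the co-orientations of the two strands at every fixed point; once the Darboux coordinates adapted to the action are in place this becomes immediate, so the proof is essentially a careful bookkeeping of orientations in the local normal form.
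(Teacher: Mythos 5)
Your proposal is correct and follows essentially the same route as the paper: invoke Proposition~\ref{prop:quotientmap} and Theorem~\ref{th:main}, then verify the parity hypothesis of Corollary~\ref{cor:ellitocomplex} using the original toric symplectic form $\omega$. The paper phrases the parity check more conceptually — the strands of $D$ are the preimages of the facets of the moment polytope, hence symplectic submanifolds of $(M,\omega)$, and transversely intersecting symplectic submanifolds meet positively with respect to the symplectic (co)orientations — whereas you reach the same conclusion by explicit computation in equivariant Darboux coordinates; these are the same argument in different clothing. One small remark: the case of smooth components $D'$ with $D'[2]=\emptyset$ that you treat separately is actually vacuous here, since the moment image of a compact toric $4$-manifold is a Delzant polygon whose boundary is connected and has vertices, so $D$ is connected and $D[2]\neq\emptyset$; including that case does no harm, but it is not needed.
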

\begin{proof}
By Proposition~\ref{prop:quotientmap} we have that $f$ is a boundary fibration, and therefore by Theorem~\ref{th:main} the manifold $M$ admits an elliptic symplectic structure. As each of the preimages of the faces of the moment polytope is a symplectic submanifold of $(M,\omega)$, the symplectic structure provides each component of the elliptic divisor with a natural co-orientation for which the intersections have positive index. It follows that the parity of the elliptic divisor is $1$.
\end{proof}
\subsection{Simple examples}
We give examples of boundary fibrations obtained from torus actions which will serve as the building blocks for the connected sum procedure. The existence of stable generalized complex/elliptic symplectic structures in all examples below was known previously. The new input is that these spaces admit fibrations compatible with the structure in question.
\begin{example}[$\mathbb{C}P^2$]\label{ex:cp2}
Consider the standard toric structure on $\cc P^2$. Proposition~\ref{ex:toric} implies that $f$ is a homologically essential boundary fibration and that $\mathbb{C}P^2$ admits an elliptic divisor with parity $1$ (three lines intersecting at different points). Therefore $\cc P^2$ admits a stable generalized complex structure compatible with its moment map.
\end{example}

\begin{example}[$\bar{\mathbb{C}P}^2$]
We consider $\bar{\cc P}^2$, i.e. $\cc P^2$ with the orientation opposite to the standard complex structure. As an oriented manifold this is not a toric manifold, but there is still a $T^2$-action with connected isotropies present. Therefore \ref{prop:quotientmap} implies that the quotient map is a homologically essential boundary fibration. Consequently, by Theorem~\ref{th:main} there exists a compatible elliptic symplectic structure with imaginary parameter on $\bar{\cc P}^2$. The parity of the elliptic divisor is $-1$ so this symplectic structure does not induce a stable generalized complex structure. As $\bar{\cc P}^2$ is not almost complex it can not have a stable generalized complex structure, hence this problem can not be remedied.
\end{example}
\begin{example}[$S^2\times S^2$]
Let $(S^2\times S^2)$ be given its standard toric structure, i.e.~the symplectic form is the product of the standard area forms and $T^2$ acts on rotation by $S^1$ one each of the factors. Proposition~\ref{ex:toric} implies that the quotient map is a homologically essential boundary fibration and that $S^2\times S^2$ admits a compatible stable generalized complex structure.
\end{example}
\begin{example}[$S^4$]\label{ex:S4}
Consider $S^4 \subset \cc^2 \times \rr$ and let $T^2$ act in the standard way on $\cc^2$. This provides an effective $T^2$-action on $S^4$ with connected isotropies. Therefore by Proposition~\ref{prop:quotientmap} we find that the quotient map is a homologically essential boundary fibration. Consquently Theorem~\ref{th:main} implies the existence of a compatible elliptic symplectic structure with imaginary parameter on $S^4$. The parity of the divisor is $-1$. Just as $\overline{\cc P^2}$, $S^4$ is not almost-complex so the index can not be fixed by making different choices of divisor or orientations. 
\end{example}
The following example of a boundary fibration appears also in \cite{CK18}:
\begin{example}[$S^3\times S^1$]
There are two interesting $T^2$-actions on $S^3\times S^1$. First, consider $S^3 \subset \cc^2$ as the unit sphere and restrict the natural $T^2$-action on $\cc^2$ to $S^3$. This provides an effective $T^2$-action on $S^3$ with $S^1$ isotropy at all points in the intersection with the coordinate hyperplanes. Extending the $T^2$-action trivially to the $S^1$-factor provides an effective $T^2$-action on $S^3 \times S^1$ with only $S^1$ isotropy groups. The quotient map
\begin{align*}
f_1\colon (S^3\times S^1,D_1) \rightarrow (I\times S^1,\set{0,1}\times S^1),
\end{align*}
then becomes a homologically essential boundary fibration by Proposition~\ref{prop:quotientmap}. Note that $D_1$ is given by the union of two disjoint tori.

Another $T^2$-action on $S^3\times S^1$ is obtained by letting one $S^1$ act by rotation on one of the coordinates of $S^3 \subset \cc^2$ and let the other act by multiplication on $S^1$. The quotient map
\begin{align*}
f_2\colon (S^3\times S^1,D_2) \rightarrow (\mathbb{D}^2,\partial \mathbb{D}^2),
\end{align*}
then again becomes a homologically essential boundary fibration by Proposition~\ref{prop:quotientmap}. In this case $D_2$ is a single torus. In both cases Theorem~\ref{th:main} implies the existence of a compatible elliptic symplectic structure with zero elliptic residue. Moreover, as the vanishing locus of the elliptic divisor is smooth and co-orientable, we obtain two stable generalized complex structures on $S^3\times S^1$.
\end{example}
The example we consider next is more elaborate than the previous ones. The existence of stable generalized complex structures on these spaces is a consequence of the more general Theorem 2 from \cite{MR3177992}.
\begin{example}[$(\#nS^1\times S^2)\times S^1$]\label{ex:newones} 
In \cite{OR70}, it is shown that for $2g+h>1$, the manifold $M= (\#( 2g+h -1) S^1\times S^2)\times S^1$ admits an effective $T^2$-action with connected isotropy groups over a base, $B$, which is a surface of genus $g$ with $h$ small open discs removed. In fact, part of the action is just rotation of the last $S^1$-factor, so this action has no fixed points (a fact that also follows from the Euler characteristic of $M$ being $0$).

By Proposition~\ref{prop:quotientmap} we conclude that there exists a homologically essential boundary fibration
\begin{align*}
f\colon ((\#(2g+h-1)S^1\times S^2)\times S^1,D) \rightarrow (B,\partial B).
\end{align*}
The degeneracy locus consist of $h$ disjoint tori -- precisely the number of boundary components of $B$ -- and is in particular co-orientable. Consequently by Theorem~\ref{th:main} there exists a compatible (smooth) stable generalized complex structure on $M$ whose type change locus has $h$ connected components.
\end{example}
To illustrate the elliptic-elliptic trade theorem we give some examples:
\begin{example}[$\mathbb{C}P^2$]
Applying Theorem~\ref{th:smoothing} to Example~\ref{ex:cp2} yields several boundary Lefschetz fibrations $f\colon (\mathbb{C}P^2,D) \rightarrow (N, \partial N)$. The number of elliptic--elliptic and Lefschetz singularities adds up to three, but any combination is possible. See also Remark~\ref{rem:eulerchar}.
\end{example}
\begin{example}[$S^4$]
Applying Theorem~\ref{th:smoothing} to Example~\ref{ex:S4} yields a boundary Lefschetz fibration $f\colon (S^4,\tilde{D}) \rightarrow (\mathbb{D}^2,\partial \mathbb{D}^2)$ with two Lefschetz singularities. Because the parity of the original divisor on $S^4$ is $-1$, the new divisor $\tilde{D}$ will be non-co-orientable. Therefore it is non-orientable and as it admits an $S^1$-fibration it must then be a Klein bottle.
\end{example}
\subsection{Main class of examples}
Using the above examples as building blocks we can now construct many more examples:
\begin{theorem}\label{th:examples}
The manifolds in the following two families admit homologically essential boundary fibrations:
\begin{itemize}
	\item $X_{n,\ell} := \#n(S^2\times S^2)\#\ell(S^1\times S^3)$, with $n,\ell \in \mathbb{N}$;
	\item $Y_{n,m,\ell} := \#n \cc P^2 \#m \bar{\mathbb{C}P}^2\#\ell (S^1 \times S^3)$, with $n,m,\ell \in \mathbb{N}$,
\end{itemize}
whenever their Euler characteristic is non-negative. Therefore, each of these manifolds admits a compatible elliptic symplectic structure, which induces a stable generalized complex structure if $1-b_1 + b_2^+$ is even.
\end{theorem}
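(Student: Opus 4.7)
My plan is to construct all the manifolds listed in the theorem as iterated connected sums and self-connected sums (Theorem~\ref{thm:glue} and Corollary~\ref{cor:selfglue}) of a small set of building blocks coming from torus actions, namely the homologically essential boundary fibrations on $\cc P^2$, $\bar{\cc P}^2$, $S^2\times S^2$, $S^4$ and $S^1\times S^3$ established in the preceding examples. The crucial counting input is that, by Proposition~\ref{prop:quotientmap}, the number of elliptic--elliptic singularities of each quotient map equals the number of torus fixed points, which is $3$, $3$, $4$, $2$ and $0$ respectively. Each connected sum of Theorem~\ref{thm:glue} consumes one elliptic--elliptic point from each summand without creating any, and each self sum of Corollary~\ref{cor:selfglue} consumes two elliptic--elliptic points.

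For $X_{n,\ell}$ with $n\geq 1$, I first perform $n-1$ connected sums among copies of the $S^2\times S^2$-fibration to obtain a boundary fibration on $\#n(S^2\times S^2)$ with $4n-2(n-1)=2n+2$ surviving elliptic--elliptic singularities; then $\ell$ self sums at pairs of these points yield the $\#\ell(S^1\times S^3)$ factors. This succeeds exactly when $2n+2\geq 2\ell$, equivalent to $\chi(X_{n,\ell})\geq 0$. The corner cases $n=0$ are the building blocks $X_{0,0}=S^4$ and $X_{0,1}=S^1\times S^3$. For $Y_{n,m,\ell}$ one argues identically starting from $n$ copies of $\cc P^2$ and $m$ copies of $\bar{\cc P}^2$: after $n+m-1$ connected sums one has $n+m+2$ surviving elliptic--elliptic singularities, and $\ell$ self sums succeed iff $\chi(Y_{n,m,\ell})\geq 0$. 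Homological essentiality is preserved at every step by the relevant results.

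With the boundary fibration in hand, Theorem~\ref{th:main} immediately yields a compatible elliptic symplectic structure with zero elliptic residue and imaginary parameter. By Corollary~\ref{cor:ellitocomplex}, this structure will induce a stable generalized complex structure once every connected component of $D$ has parity $+1$ with respect to the orientation determined by $\omega$. I would track parities through the construction using the formulas in Theorem~\ref{thm:glue} and Corollary~\ref{cor:selfglue}; on each of the five building blocks a direct check shows that the product of parities of all components of $D$ equals $(-1)^{1-b_1+b_2^+}$, and since $(1-b_1+b_2^+)(M_1\# M_2)=(1-b_1+b_2^+)(M_1)+(1-b_1+b_2^+)(M_2)-1$ is compatible with the transformation rule $\varepsilon_{\tilde D}=-\varepsilon_{D_1'}\varepsilon_{D_2'}$, this global parity identity persists throughout the construction.

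The main obstacle will be upgrading this global parity identity to the component-wise parity condition required by Corollary~\ref{cor:ellitocomplex}: even when $1-b_1+b_2^+$ is even, so that the overall product of parities is $+1$, individual components could still have parity $-1$ unless the construction is performed judiciously. I expect this can be arranged by exploiting the freedom in choosing the order of connected sums, in pairing elliptic--elliptic points, and in interchanging $\cc P^2$ with $\bar{\cc P}^2$ (which differ only by orientation), in order to merge pairs of parity-$-1$ components into single parity-$+1$ components whenever they appear. Combining this with the preservation of the global parity should yield the component-wise parity condition precisely when the mod-$2$ obstruction $1-b_1+b_2^+$ vanishes, completing the proof.
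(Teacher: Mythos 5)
You take essentially the same route as the paper: the same torus-action building blocks ($\cc P^2$, $\bar{\cc P}^2$, $S^2\times S^2$, $S^4$, $S^1\times S^3$), the same iterated connected and self-connected sums via Theorem~\ref{thm:glue} and Corollary~\ref{cor:selfglue}, the same Euler-characteristic count delimiting how many self-sums are possible, and the same appeal to Theorem~\ref{th:main} plus the parity criterion of Corollary~\ref{cor:ellitocomplex}. The one place you go beyond the paper's own write-up is in explicitly flagging the gap between the \emph{aggregate} parity (the product over all components, which both you and the paper show equals $(-1)^{1-b_1+b_2^+}=(-1)^{n-1+\ell}$) and the \emph{component-wise} parity condition that Corollary~\ref{cor:ellitocomplex} actually demands; the paper reports only the single number $(-1)^{n-1+\ell}$ and leaves the component-wise bookkeeping implicit. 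Your concern is well-founded: the self-sum formula of Corollary~\ref{cor:selfglue} gives $\varepsilon_{\tilde D_1'}\varepsilon_{\tilde D_2'}=-\varepsilon_{D'}^2=-1$ whenever $p_1,p_2$ lie in a common component, so some component acquires parity $-1$ at that stage and the aggregate sign alone cannot close the argument. However, the fix you float of interchanging $\cc P^2$ with $\bar{\cc P}^2$ is not actually available, since it changes the diffeomorphism type of $Y_{n,m,\ell}$; the genuine freedom is in which pairs of elliptic--elliptic points each self-sum consumes and which components they lie in, and that is the choice one must exploit (and spell out) to realize the component-wise condition --- something the paper also leaves unstated.
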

\begin{proof}
In the previous section we exhibited boundary fibrations on $\cc P^2,\bar{\mathbb{C}P}^2$ and $S^2\times S^2$ with $3,3,4$ points in $D[2]$ respectively. Therefore we may apply Theorem~\ref{thm:glue} inductively to obtain homologically essential boundary fibrations on $X_{n,0}$ and $Y_{n,m,0}$ for all possible values of $n$ and $m$, including $n=m=0$ by Example~\ref{ex:S4}. The number of points in $D[2]$ for these manifolds is $2n+2$ and $n+m+2$ respectively. Therefore we can apply Corollary~\ref{cor:selfglue} respectively $n+1$ and $\floor{\frac{n+m+2}{2}}$-times to obtain homologically essential boundary fibrations on $X_{n,l}$ and $Y_{n,m,\ell}$, for $\ell \leq n+1,\floor{\frac{n+m+2}{2}}$ respectively. A simple computation of the Euler characteristic of these manifolds shows that this is precisely when their Euler characteristic is non-negative. The parity of the divisor in $\cc P^2, \bar{\cc P}^2$ and $S^2\times S^2$ is $1,-1,1$ respectively. Therefore Theorem~\ref{thm:glue} gives us that the parity of $X_{n,0}$ and $Y_{n,m,0}$ is $(-1)^{n-1}$. Corollary~\ref{cor:selfglue} gives us that the parity of the divisor in $X_{n,\ell}$ and $Y_{n,m,\ell}$ is $(-1)^{n-1+\ell}$. By Theorem~\ref{th:main} these manifolds admit compatible elliptic symplectic structures. These induce stable generalized complex structures when $(-1)^{n-1+\ell}=1$, which is to say that $1-b_1 + b_2^+$ is even.
\end{proof}
The following remarks elaborate on the assumptions on the $n,m$ and $\ell$ in the above theorem.
\begin{remark}[Euler characteristic]\label{rem:eulerchar}
The condition on the Euler characteristic is necessary. Indeed a simple application of Mayer--Vietoris shows that if $f\colon M^4 \rightarrow \Sigma^2$ is a boundary Lefschetz fibration over a surface with $k$ corners, and $\ell$ Lefschetz singular fibres then $\chi(M) = k + \ell$. In particular, we find that the Euler characteristic of a manifold admitting a boundary Lefschetz fibration is necessarily non-negative. Therefore we conclude that we found all members of the families appearing in Theorem~\ref{th:examples} that admit boundary fibrations.

This observation has another consequence. One can perform connected sum of manifolds with elliptic divisors $(M_i,D_i)$ at points in $p_i \in D_i[1]$, but, differently from the case of points in $D_i[2]$, connected sum at $D_i[1]$ is not compatible with the existence of boundary Lefschetz fibrations. Indeed, if each $M_i$ had such a fibration which induced one in  $M_1 \#_{p_1,p_2} M_2$ by some identification of neighbourhoods of $p_1$ and $p_2$, the number of corners and Lefschetz singularities would be no less than the sum of corners and singularities for each $M_i$, but the Euler characteristic of $M_1\#M_2$ equals the sum of the Euler characteristics of $M_1$ and $M_2$ minus two.
\end{remark}
\begin{remark}[Betti numbers]
The existence of a generalized complex structure on a manifold implies the existence of an almost-complex structure. Such a structure cannot exist when $1-b_1 + b_2^+$ is odd, which explains that we found all members of the families appearing in Theorem~\ref{th:examples} that admit stable generalized complex structures arising from boundary fibrations.
\end{remark}
\begin{remark}[Torus actions]
Torus actions persist under taking connected sums of disjoint manifolds at fixed points \cite{OR70}. In fact, \cite{OR70} provides a classification of simply connected four-manifolds with effective torus actions and connected isotropy groups. The manifolds admitting such actions are precisely the manifolds $X_{n,0}$, $Y_{n,m,0}$, and $S^4$ appearing in Theorem~\ref{th:examples}. Whenever such a $T^2$-action is present it is possible to ensure that the elliptic symplectic structure arising from Theorem~\ref{th:examples} is $T^2$-invariant, hence we obtained all such simply connected four-manifolds admitting $T^2$-invariant stable generalized complex structures.

In the non-simply connected case, \cite{OR70} also provides a classification of effective non-free torus actions with only $S^1$-isotropy groups on compact oriented connected four-manifolds. It is proven that any of these manifolds is of the form as described in Example~\ref{ex:newones}, hence we have obtained all manifolds with such actions and $T^2$-invariant stable generalized complex structures.
\end{remark}
\subsection{Relation to semi-toric geometry}
We finish by relating our results to semi-toric geometry. Recall that a \textbf{focus--focus} singularity of a completely integrable system $(M,\omega,f)$ is a point $p \in M$ where there are Darboux coordinates $(x_1,y_1,x_2,y_2)$ for $\omega$ in which $f$ takes the form
\begin{align*}
(x_1,y_1,x_2,y_2) \stackrel{f}{\longmapsto} (x_1y_2-x_2y_1,x_1x_2+y_1y_2).
\end{align*}
\textbf{Semi-toric manifolds} (\cite{VP08}) are generalisations of four-dimensional toric manifolds where the moment map, besides elliptic and elliptic--elliptic singularities, may also have focus--focus singularities. If we use the above Darboux coordinates to define complex coordinates
\begin{align*}
(w_1,w_2) = \frac{1}{4}(x_1+y_2+i(x_1-y_2),x_1-y_2+i(x_1+y_2)),
\end{align*}
we see that the point $p$ becomes a Lefschetz singularity of the moment map $f$.
\begin{proposition} Moment maps of semi-toric manifolds are boundary Lefschetz fibrations. Consequently, semi-toric manifolds admit compatible stable generalized complex structures, for which the elliptic divisor is the pre-image of the boundary of the moment map image.
\end{proposition}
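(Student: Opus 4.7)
The plan is to show that the moment map $f\colon M^4\to\mathbb{R}^2$ of a semi-toric manifold satisfies all the conditions of Definition~\ref{def:blf}, after replacing $\mathbb{R}^2$ by the boundarification of $(\mathbb{R}^2, Z)$ where $Z$ is the topological boundary of the moment map image. Once that is done, applying Theorem~\ref{th:main} produces the desired compatible stable generalized complex structure.

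First I would catalogue the three types of critical points of $f$ and verify each produces one of the allowed local models. At an elliptic singularity, a local Hamiltonian $S^1$-action supplies Darboux coordinates in which $f$ has the form $(z, x_3, x_4)\mapsto (|z|^2, x_4)$, which is precisely the rank-one elliptic normal form from Lemma~\ref{lem:boundmaplocform}. At an elliptic--elliptic singularity, the local $T^2$-action gives $f(z_1, z_2) = (|z_1|^2, |z_2|^2)$, the standard rank-two elliptic normal form of Lemma~\ref{lem:boundmaplocform}. At a focus--focus singularity, the Darboux form written in the statement, composed with the complex coordinate change displayed, produces $f(w_1, w_2) = w_1 w_2$ (up to an affine change of coordinates on the target), which after the linear change $(w_1, w_2)\mapsto (w_1+iw_2, w_1-iw_2)$ becomes $z_1^2+z_2^2$: precisely the Lefschetz normal form of Definition~\ref{def:blf}. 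Away from the critical set $f$ is by definition a submersion, the critical set in $M\setminus D$ (the focus--focus locus) is finite, and the fibration conditions along each stratum of $D$ follow from the toric character of the action there, via Proposition~\ref{prop:quotientmap} applied to the local $T^k$ action.

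Having established that $f$ is a boundary Lefschetz fibration, Theorem~\ref{th:main} produces a compatible elliptic symplectic form with zero elliptic residue and imaginary parameter, provided $f$ is homologically essential. This last property follows as in Proposition~\ref{ex:toric}: the ambient symplectic form on $M$ pairs nontrivially with the regular torus fibres of $f$, since those fibres are Lagrangian tori of the integrable system and carry nontrivial homology classes dual to the moment map components. Finally, to obtain a stable generalized complex structure via Corollary~\ref{cor:ellitocomplex}, one checks co-orientability and parity one of the connected components of $D$: each stratum $D[1]$ is the preimage under $f$ of a boundary edge and is a symplectic submanifold of $M$, which simultaneously provides a canonical co-orientation and forces each intersection index at $D[2]$ to be $+1$, exactly as in the toric argument of Proposition~\ref{ex:toric}.

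The only nonroutine point is the coordinate change at focus--focus singularities, where one must take care that the chosen complex coordinates are orientation-preserving with respect to the symplectic orientation on $M$; if necessary, one precomposes with a complex conjugation on one factor, as permitted by Remark~\ref{rem:there will be order}, and this does not affect the conclusion because focus--focus singularities on a symplectic manifold are symplectic (``positive'') Lefschetz singularities by construction.
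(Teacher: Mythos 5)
Your verification of the local models and the co-orientability/parity discussion via symplectic submanifolds broadly match what the paper relies on (the paper simply invokes Proposition~\ref{ex:toric} for all of this). However, there is a genuine error in your proof of homological essentiality, and the paper handles it differently.

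You argue that ``the ambient symplectic form on $M$ pairs nontrivially with the regular torus fibres of $f$.'' This cannot work: the regular fibres of an integrable system's moment map are \emph{Lagrangian} tori, so $\int_F\omega = 0$ by definition. The symplectic form therefore provides no witness to $[F]\neq 0$ in $H_2(M\backslash D;\rr)$. (This is also why the compatible elliptic symplectic form produced by Theorem~\ref{th:main} is \emph{not} the original $\omega$: it must make the fibres of $\mathcal{A}_{|D|}\to\mathcal{A}_Z$ symplectic, whereas $\omega$ makes the fibres Lagrangian.) The paper instead uses a purely topological argument: since $N\backslash Z$ is contractible, the open set $M\backslash D = f^{-1}(N\backslash Z)$ has, by the standard description of Lefschetz fibrations over a disc, the homotopy type of the regular fibre $F\simeq T^2$ with $2$-cells attached along the Lefschetz vanishing cycles; attaching $2$-cells along $1$-cycles cannot kill $H_2$, so $[F]$ survives in $H_2(M\backslash D;\rr)$. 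You would need to replace your symplectic-pairing argument by this (or by a section argument as in Proposition~\ref{prop:quotientmap}) for the proof to go through.

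Beyond that gap, your proof is more long-winded than the paper's. The local-model bookkeeping you carry out at elliptic, elliptic--elliptic, and focus--focus points is essentially implicit in the definition of a semi-toric system together with the coordinate change displayed in the paragraph preceding the proposition, so the paper is content to invoke Proposition~\ref{ex:toric} and Theorem~\ref{th:main} directly and only argues the one point that is genuinely new in the semi-toric case. Your observation about orientation compatibility at focus--focus points via Remark~\ref{rem:there will be order} is correct and worth keeping in mind, but it is not the crux of the proposition.
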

\begin{proof} In light of Theorem~\ref{th:main} and the toric case (Proposition~\ref{ex:toric}) we need only argue that the map is homologically essential. This follows because the homotopy type of $M\backslash D$ is obtained from a regular fibre by adding $2$-cells along the vanishing cycles corresponding to each Lefschetz singularity.
\end{proof}
\begin{remark}
Theorem~\ref{th:smoothing} trades an elliptic--elliptic singularity for a Lefschetz singularity in the context of a fibration without further geometric structures. This is reminiscent of the nodal trade/Hamiltonian Hopf bifurcation from semi-toric geometry \cite{Zung03,MR815106} in the context of Lagrangian fibrations. In the Hamiltonian Hopf bifurcation, elliptic--elliptic singularities are traded for focus-focus singularities, which by the above are equivalent to Lefschetz singularities. However these maps interact differently with the underlying geometric structure. Notably, in the semi-toric version, the base of the fibration has a singular integral affine structure which helps with the extension of the fibration beyond a neighbourhood of the singularities involved.

The converse trade for semi-toric geometry, similar to our Theorem~\ref{th:converse trade}, appeared in \cite{MR2670163}. There, the authors also make use of the singular integral affine structure structure present in such integrable systems.
\end{remark}
%
%
\bibliographystyle{hyperamsplain-nodash}
\bibliography{references} 
\end{document}